\documentclass[11pt, a4paper]{amsart}

\usepackage{amsfonts,amsmath,amssymb,amscd}

\newtheorem{theorem}{Theorem}[section]
\newtheorem{lemma}[theorem]{Lemma}
\newtheorem{proposition}[theorem]{Proposition}
\newtheorem{corollary}[theorem]{Corollary}
\newtheorem{claim}[theorem]{Claim}

\theoremstyle{definition}
\newtheorem{definition}[theorem]{Definition}
\newtheorem{remark}[theorem]{Remark}
\newtheorem{example}[theorem]{Example}

\numberwithin{equation}{section}

\hyphenation{comod-ule}

\title[$\times_R$-Bialgebras]
{$\times_R$-Bialgebras associated with iterative $q$-difference rings}

\author[A.~Masuoka]{Akira Masuoka}
\address{Akira Masuoka: 
Institute of Mathematics, 
University of Tsukuba, 
Ibaraki 305-8571, Japan}
\email{akira@math.tsukuba.ac.jp}

\author[M.~Yanagawa]{Makoto Yanagawa}
\address{Makoto Yanagawa: 
Graduate School of Pure and Applied Sciences, 
University of Tsukuba, 
Ibaraki 305-8571, Japan}
\email{myanagawa@math.tsukuba.ac.jp}

\begin{document}

\begin{abstract}
Realizing the possibility suggested by Hardouin \cite{H}, we show that her own Picard-Vessiot Theory
for iterative $q$-difference rings is covered by the (consequently, more general) framework, settled 
by Amano and Masuoka \cite{AM}, of
artinian simple module algebras over a cocommutative pointed Hopf algebra. 
An essential point is to represent iterative $q$-difference modules over an iterative $q$-difference 
ring $R$, by modules over a certain cocommutative $\times_R$-bialgebra. Recall that the notion of  
$\times_R$-bialgebras was defined by Sweedler \cite{S3}, as a generalization of bialgebras. 
\end{abstract}

\maketitle

\noindent
{\sc Key Words:}
Picard-Vessiot theory, iterative $q$-difference ring, 
Hopf algebra, affine group scheme, $\times_R$-bialgebra.

\medskip
\noindent
{\sc Mathematics Subject Classification (2000):}
12H05, 12H10, 16T05. 

\section*{Introduction}\label{sec:intro}

Picard-Vessiot Theory is Galois Theory for linear differential equations; see van der Put and Singer \cite{PS2} (2003).
There were recognized some analogous theories in which a single differential operator is replaced by 
a family of such operators or iterative differential operators, for example.  
Takeuchi \cite{T} (1989) reconstructed and unified those theories by characteristic-free, Hopf algebraic 
approach. 
Amano and Masuoka \cite{AM} (2005) extended 
Takeuchi's theory, to absorb as well Picard-Vessiot Theory for difference
equations such as developed by van der Put and Singer \cite{PS1} (1997). 
Such a unification of differential and difference Picard-Vessiot Theories was 
done earlier by Andr\'{e} \cite{An} (2001) from the different standpoint of non-commutative geometry. 
In the framework of \cite{AM}, 
(i)~differential or difference operators, (ii)~differential fields or difference total rings, (iii)~differential or difference
equations, and (iv)~linear algebraic groups of differential or difference automorphisms, all
in the classical situation are replaced by (i)~actions by an appropriately chosen, cocommutative pointed
Hopf algebra $D$, (ii)~artinian simple or AS $D$-module algebras $R$,  (iii)~modules over the smash-product algebra
$R\# D$, and (iv)~affine group schemes (or equivalently, commutative Hopf algebras) of $D$-module
algebra automorphisms, respectively. 

One feature of the Hopf-algebraic approach is first to define Picard-Vessiot extensions abstractly,
and then to characterize them as minimal splitting fields or algebras of equations (or of
module objects); recall that such fields or algebras are chosen as the definition of Picard-Vessiot
extensions by the classical approach. 
As a benefit, the Galois correspondence turns to be just $``$a dictionary between Hopf ideals and
intermediate artinian simple $D$-modules algebras," as was expressed by Bertrand \cite{B} (2011). 

Hardouin \cite{H} (2010) developed Picard-Vessiot Theory for iterative $q$-difference operators, 
using not results, but ideas from Matzat and van der Put \cite{MaP} (2003), 
who developed, probably without knowing \cite{T}, the theory for iterative differential operators.  
In the introduction of the paper, Hardouin says
$``$This analogy between iterative differential
Galois theory and iterative difference Galois theory could perhaps be explained in a
more theoretical way, as it is done in the paper of Y. Andr\'{e} \cite{An} for classical theories."
The objective of this paper is to realize this suggestion. 
In fact, we will show that the main results of \cite{H}, which will be reproduced as Theorems 
\ref{thm:HardouinThm4.7}--\ref{thm:HardouinThm4.20}, 
follow from results of \cite{AM}; this is formulated as Claim \ref{claim}. 
Suppose that $R$ is a commutative ring
which includes the field $C(t)$ of rational functions over a field $C$. 
Given an element $q \in C\setminus \{0,1\}$ and an automorphism $\sigma_q$ on $R$ which extends
the $q$-difference operator $f(t) \mapsto f(qt)$ on $C(t)$, 
the ring $R$ is called an \emph{iterative} $q$-\emph{difference ring}
if it is given an \emph{iterative} $q$-\emph{difference operator}, i.e., an $\infty$-sequence 
$\delta^{(0)}_R = \mathrm{id}_R, \delta^{(1)}_R, \delta^{(2)}_R, \dots$ of operators 
which satisfy some conditions that include
\[ \delta^{(k)}_R( x y ) = \sum_{ i + j = k } \sigma_q^i \circ \delta^{(j)}_R(x) \, \delta^{(i)}_R( y ),\ x, y \in R. \]
This last condition is obviously equivalent to
\[ \delta^{(k)}_R( x y ) = \sum_{ i + j = k } \delta^{(i)}_R( x )\, \sigma_q^i \circ \delta^{(j)}_R(y),\ x, y \in R. \]
An essential point for us is to refine this equivalence into the cocommutativity of a certain $\times_R$-bialgebra,
say $\mathcal{H}$; recall that the notion of  
$\times_R$-bialgebras was defined by Sweedler \cite{S3} (1974), as a generalization of bialgebras.
The module objects, i.e., iterative $q$-difference modules, over $R$ 
are identified with $\mathcal{H}$-modules. 
Consequently, operations, such as tensor products and taking duals, on iterative $q$-difference modules
can be well controlled by structures, such as the coproduct and an analogue of antipodes, on $\mathcal{H}$. 
As a final step of proving Claim \ref{claim} in characteristic zero, $\mathcal{H}$-modules are identified with 
$R\# D$-modules for an appropriate cocommutative pointed Hopf algebra $D$. In positive characteristic,
the $\mathcal{H}$-modules are embedded into the category of $R \# D$-modules for a
distinct $D$. 

The contents of this paper are as follows. 
Section \ref{sec:review} reproduces some results from \cite{AM} that will be needed later, 
partially in reformulated form. Section \ref{sect:xR} is devoted to preliminaries on 
$\times_R$-bialgebras. In Section \ref{sect:iter-dif-ring}, we associate to each iterative $q$-difference
ring $R$, a cocommutative $\times_R$-bialgebra $\mathcal{H}$ such as explained above; see Theorem \ref{thm:H}. 
This $\mathcal{H}$ is naturally characterized in the endomorphism algebra $\operatorname{End}(R)$; see
Proposition \ref{prop:injective_representation}.
In Section \ref{sec:proof_of_claim}, we prove the desired Claim \ref{claim}. The argument
will show that some additional results on AS $D$-module algebras, such as given in \cite{AM, A}, 
as well 
can apply to iterative $q$-difference rings
in a generalized (and hopefully, more natural) situation; see Section \ref{subsec:added_remark}.    
The final Section \ref{sec:remarks} gives the remark that the results on 
iterative $q$-difference rings shown in Section \ref{sec:proof_of_claim} are directly generalized to 
rings given a $q$-\emph{skew iterative} $\sigma$-\emph{derivation} such as Heiderich \cite{He} (2010) defines. 

As a remarkable new direction of relevant research, 
Saito and Umemura \cite{SaU} (preprint, 2012) explore a quantized world 
associated with non-linear differential-difference equations including those defined by 
iterative $q$-difference operators.

\section{Quick review on Picard-Vessiot theory of\\ artinian simple module algebras}\label{sec:review}

In this section we work over a fixed field $\Bbbk$. Let $D$ be a 
Hopf algebra with coproduct $\Delta : D \to D \otimes D$ and counit $\varepsilon : D \to \Bbbk$.  
For this and any other coproducts
we use the sigma notation \cite[Section 1.2]{S2}
\[\Delta (d) = \sum \, d_1 \otimes d_2.\] 
We assume that $D$ is cocommutative and pointed. 
Thus, $D$ equals a smash product $D^1 \# kG$ of a cocommutative irreducible Hopf algebra $D^1$ by a
group Hopf algebra $\Bbbk G$; see \cite[Section 8.1]{S2}. 
If the characteristic $\operatorname{char}\Bbbk$ of $\Bbbk$ is zero,
then $D^1$ equals the universal envelope of the Lie algebra consisting of all primitives in $D$. 
We assume in addition
\begin{equation}\label{BW_assumption} 
D^1\ \text{is \emph{Birkhoff-Witt} as a coalgebra}. 
\end{equation}
This means that $D^1$ is the tensor product of (possibly, infinitely many) copies of
the coalgebra spanned by an $\infty$-divided power sequence. 
This is always satisfied if $\operatorname{char}\Bbbk= 0$. 
If $\operatorname{char}\Bbbk = p > 0$, the assumption is equivalent to saying that the Verschiebung
map $D^1 \to D^1 \otimes \Bbbk^{1/p}$ is surjective. 

Recall from \cite[p.153]{S2} 
the definition of $D$-\emph{module algebras}, by which we mean left $D$-module
algebras that are non-zero and commutative; the action will be written as 
$d \rightharpoonup$, $d \in D$. An algebra given a family of differential operators (in characteristic zero),
iterative differential operators or/and a family of inversive difference operators are presented as a $D$-module
algebra for an appropriate $D$; see \cite[Introduction]{AM}.  

Given a $D$-module algebra $A$, the subalgebra of $D$-invariants in $A$ is given by
\[ A^D = \{ a \in A \mid d\rightharpoonup  a = \varepsilon(d)a,\ d \in D \}. \]
If $A$ is simple, i.e., contains no non-trivial $D$-stable ideal, then $A^D$ is a field. 
A $D$-module algebra 
is said to be \emph{artinian simple} or \emph{AS} \cite[Definition 2.6]{AM}, if it is artinian (as a commutative ring)
and simple. An AS $D$-module algebra is the direct product
of mutually isomorphic, finitely many fields on which the group $G$ in $D$ acts transitively, 
whence it is \emph{total}, 
i.e., every non-zero divisor is invertible \cite[Corollary 2.5]{AM}. 
In an AS $D$-module algebra, a $D$-module subalgebra is AS if and only if 
it is total \cite[Lemma 2.8]{AM}.

An inclusion $K \subset L$ of AS $D$-module algebras
is said to be a \emph{Picard-Vessiot} or \emph{PV extension} \cite[Definition 3.3]{AM}, 

(i)\ if their $D$-invariants coincide, $K^D = L^D$, and 

(ii)\ if there exists (necessarily, uniquely \cite[Proposition 3.4(iii)]{AM}) an intermediate $D$-module
algebra $K \subset A \subset L$ such that 
\begin{itemize}
\item[(a)] the $D$-invariants $H :=(A \otimes_K A)^D$ in
$A\otimes_K A$, on which $D$ acts diagonally, i.e., through $\Delta$, generates the left (or right) $A$-module
$A \otimes_K A$, and 
\item[(b)] the total quotient ring $Q(A)$ (i.e., the localization by all non-zero divisors) of $A$ coincides with $L$. 
\end{itemize}
Suppose that this is the case. According to traditional notation, the field 
$K^D \, (=L^D)$ will be denoted by $C$. Obviously, $A^D = C$.
As a $D$-module algebra, $A$ is simple \cite[Corollary 3.12]{AM}. Moreover, it contains 
all primitive idempotents in $L$, 
so that $A$ is the direct product $A_1\times \dots \times A_r$ of mutually isomorphic
integral domains $A_1,\dots A_r$, and
$L$ is the direct product $L_1 \times \dots \times L_r$, where $L_i$ is the quotient field of $A_i$. 
The map $\mu : A \otimes_C H \to A \otimes_K A$, $\mu(a \otimes h) = (a\otimes 1)\, h$ is necessarily bijective. 
The commutative $C$-algebra $H$ has a unique $C$-Hopf algebra structure such that 
\[ \theta : A \to A \otimes_C H, \ \theta(a) = \mu^{-1}(1 \otimes a) \]
makes $A$ into a right $H$-comodule (algebra) over $C$. 
We call $A$ (resp., $H$) the \emph{principal $D$-module algebra} (resp., the \emph{Hopf algebra}) for $L/K$, 
and often refer to the triple $(L/K, A, H)$ as a PV extension. 
We let $\mathbf{G}(L/K)= \mathrm{Spec}_C H$ denote the affine group scheme over $C$
which corresponds to $H$, and call it the \emph{PV group scheme} for $L/K$. 
This is naturally isomorphic to the group-valued functor $\mathbf{Aut}_{D,K\text{-}\mathrm{alg}}(A)$ which associates
to each commutative $C$-algebra $T$, the group of all $D$-linear $K \otimes_C T$-algebra automorphisms
on $A \otimes_C T$ \cite[Remark 3.11]{AM}. 
The affine scheme $\mathrm{Spec}_C A$ is a $\mathbf{G}(L/K)$-torsor over 
$\mathrm{Spec}_C K$, or in other words, $A/K$ is an $H$-Galois extension. This means that the $A$-algebra
map ${}_A\theta : A \otimes_K A \to A \otimes_C H$, ${}_A\theta(a \otimes b)=(a\otimes 1) \theta(b)$ 
is an isomorphism \cite[Proposition 3.4]{AM}.   

We remark that $\mathbf{G}(L/K)$ is not necessarily algebraic since we do not assume that
$L$ is finitely generated over $K$; see Lemma \ref{lem:finite_generation} below. 

\begin{theorem}[\textbf{Galois correspondence}--\cite{AM}, Theorem~3.9]\label{thm:Gal_corresp}
Let $(L/K, A, H)$ be a PV extension of AS $D$-module algebras. 
\begin{itemize}
\item[(1)] If $K \subset M \subset L$ is an intermediate AS (or equivalently, total) $D$-module algebra, 
then $L/M$ is a PV extension
which has $AM$ as its principal $D$-module algebra. The correspondence $M \mapsto \mathbf{G}(L/M)$ 
gives a bijection from the set of all intermediate AS $D$-module algebras $K \subset M \subset L$ to
the set of all closed subgroup schemes in $\mathbf{G}(L/K)$. 
\item[(2)] An intermediate AS $D$-module algebra $M$ is a PV extension over $K$ if and only if
the corresponding $\mathbf{G}(L/M)$ is normal in $\mathbf{G}(L/K)$. In this case, $\mathbf{G}(M/K)$
is naturally isomorphic to the quotient group sheaf $\mathbf{G}(L/K)\tilde{\tilde{/}}\mathbf{G}(L/M)$
in the fpqf topology.
\end{itemize}
\end{theorem}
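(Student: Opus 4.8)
The plan is to exhibit $M\mapsto\mathbf{G}(L/M)$ as one half of a pair of mutually inverse maps between the intermediate AS $D$-module algebras of $L/K$ and the Hopf ideals of $H$ (equivalently the closed subgroup schemes of $\mathbf{G}(L/K)$), and then to read off normality by comparing with $\mathbf{G}(M/K)$ when $M/K$ is itself PV. Throughout I use that $C$ is a field, that $A$ is $D$-simple and contains every primitive idempotent of $L=Q(A)$, and that $\operatorname{Spec}_{C}A\to\operatorname{Spec}_{C}K$ is a $\mathbf{G}(L/K)$-torsor, i.e.\ $A/K$ is $H$-Galois and faithfully flat. For the forward map, let $K\subseteq M\subseteq L$ be intermediate AS. Then $C=K^{D}\subseteq M^{D}\subseteq L^{D}=C$ forces $M^{D}=C$, and $A\subseteq AM\subseteq L$ with $L$ total gives $Q(AM)=Q(A)=L$. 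Moreover $AM$ is $D$-simple: for a proper $D$-stable ideal $J\subseteq AM$ the $D$-stable ideal $JL$ of $L$ is $0$ or $L$; if $JL=L$, writing $1=\sum_{k}j_{k}\ell_{k}$ with $j_{k}\in J$, $\ell_{k}\in L=Q(A)$ and clearing a common denominator $s\in A$ yields $s=\sum_{k}j_{k}(s\ell_{k})$, a non-zero-divisor lying in the $D$-stable ideal $J\cap A$ of the $D$-simple $A$, which is impossible since $J\neq AM$ forces $J\cap A\neq A$; hence $JL=0$ and $J=0$. Base-changing the $A$-linear isomorphism $\mu$ along $A\to AM$ gives $AM\otimes_{K}A\cong AM\otimes_{C}H$, and the natural surjection $AM\otimes_{K}A\twoheadrightarrow AM\otimes_{M}AM$ sends the image of $H$ into $H_{M}:=(AM\otimes_{M}AM)^{D}$; thus $AM\otimes_{M}AM$ is generated as a left $AM$-module by $H_{M}$, so by the definition recalled in Section~\ref{sec:review} the triple $(L/M,\,AM,\,H_{M})$ is a PV extension with principal $D$-module algebra $AM$. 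The induced $C$-algebra map $H\to H_{M}$ is a Hopf algebra surjection — surjective because $\mu_{M}\colon AM\otimes_{C}H_{M}\to AM\otimes_{M}AM$ is now bijective, so $\operatorname{id}_{AM}\otimes(H\to H_{M})$ is surjective and $AM\otimes_{C}(-)$ is faithful over the field $C$ — whence $\mathbf{G}(L/M)=\operatorname{Spec}_{C}H_{M}$ is a closed subgroup scheme of $\mathbf{G}(L/K)$.

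For the backward map and surjectivity, take a Hopf ideal $I\subseteq H$ with quotient Hopf algebra $\bar H=H/I$; compose $\theta$ with $A\otimes_{C}H\twoheadrightarrow A\otimes_{C}\bar H$ to make $A$ a right $\bar H$-comodule algebra, put $B:=A^{\operatorname{co}\bar H}$ and $M_{I}:=Q(B)$. By faithfully flat Hopf--Galois descent (available since $A/K$ is $H$-Galois and faithfully flat and $C$ is a field), $A/B$ is $\bar H$-Galois and $A$ is faithfully flat over $B$; faithful flatness carries non-zero-divisors of $B$ to non-zero-divisors of $A$, so $Q(B)\hookrightarrow Q(A)=L$ and $M_{I}$ is a total, $D$-stable subalgebra of $L$, hence an intermediate AS $D$-module algebra. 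Localising the $\bar H$-Galois isomorphism $A\otimes_{B}A\cong A\otimes_{C}\bar H$ at the non-zero-divisors of $B$ identifies $AM_{I}$ with $A\otimes_{B}M_{I}$ and shows $AM_{I}/M_{I}$ is $\bar H$-Galois; taking $D$-invariants and using $(AM_{I})^{D}=C$ gives $H_{M_{I}}=(AM_{I}\otimes_{M_{I}}AM_{I})^{D}=(AM_{I}\otimes_{C}\bar H)^{D}=\bar H$, compatibly with $H\twoheadrightarrow\bar H$. Hence $\mathbf{G}(L/M_{I})$ is exactly the closed subgroup scheme $\operatorname{Spec}_{C}\bar H$, so $M\mapsto\mathbf{G}(L/M)$ is surjective onto the closed subgroup schemes of $\mathbf{G}(L/K)$.

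The remaining point — injectivity, i.e.\ that $M$ is recovered from $\mathbf{G}(L/M)$, equivalently $Q(A^{\operatorname{co}H_{M}})=M$ — is the main obstacle. Using $M=(AM)^{\operatorname{co}H_{M}}$ (the coinvariant description of the base of the torsor $\operatorname{Spec}_{C}(AM)\to\operatorname{Spec}_{C}M$) and the compatibility $\theta_{M}|_{A}=\bar\theta$ one identifies $A^{\operatorname{co}H_{M}}=A\cap M$; moreover a non-zero-divisor of $A\cap M$ remains a non-zero-divisor of $A$ by descent, hence — as $A$ contains every primitive idempotent of $L$ — is invertible in $L$ and therefore in $M$, which gives $Q(A\cap M)\subseteq M$. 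The reverse inclusion is the crux: one must rule out ``extra invariants'', i.e.\ prove $L^{\mathbf{G}(L/M)}=M$, equivalently that the canonical $\mathbf{G}(L/M)$-equivariant map $\operatorname{Spec}_{C}(AM)\to\operatorname{Spec}_{C}A\times_{\operatorname{Spec}_{C}(A\cap M)}\operatorname{Spec}_{C}M$ of $\mathbf{G}(L/M)$-torsors is an isomorphism; this rests on faithfully flat descent and ultimately on the $D$-simplicity of $A$ over $K$. (If one instead writes $L/K$ as a filtered union of finitely generated, hence PV, sub-extensions, this point can be reduced to the case where $\mathbf{G}(L/K)$ is algebraic, settled by a dimension count on torsors, and then recovered in the limit.) Granting it, the two maps are mutually inverse, which proves~(1).

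For~(2), suppose first $M/K$ is itself a PV extension, with principal algebra $A'$ and Hopf algebra $H'$; then $A'$ contains every primitive idempotent of $M$, and one may take $A'=A\cap M$. The inclusion $A\cap M\subseteq A$ respects the coactions and induces a surjection of $C$-Hopf algebras $H\twoheadrightarrow H'$ whose kernel is a \emph{normal} Hopf ideal, because $H'$ is a full comodule-algebra quotient rather than merely an algebra quotient; hence $\mathbf{G}(L/M)=\ker\!\big(\mathbf{G}(L/K)\to\mathbf{G}(M/K)\big)$ is a normal closed subgroup scheme, $\mathbf{G}(M/K)=\operatorname{Spec}_{C}H'$, and, $H'$ being the Hopf subalgebra of right $\mathbf{G}(L/M)$-invariants of $H$, $\operatorname{Spec}_{C}H'$ is precisely the function algebra of the fpqc sheaf quotient $\mathbf{G}(L/K)\tilde{\tilde{/}}\mathbf{G}(L/M)$. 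Conversely, if $\mathbf{N}=\mathbf{G}(L/M)$ is normal, then $\mathbf{G}(L/K)/\mathbf{N}$ is again an affine group scheme over $C$, say $\operatorname{Spec}_{C}H_{1}$ with $H_{1}\subseteq H$ the Hopf subalgebra of right $\mathbf{N}$-invariants; unwinding the definitions and using $A\cap M=A^{\operatorname{co}H_{M}}$ from the proof of~(1), one checks that $A\cap M$ is a $D$-simple $H_{1}$-comodule algebra with $(A\cap M\otimes_{K}A\cap M)^{D}=H_{1}$ and $Q(A\cap M)=M$, so $(M/K,\,A\cap M,\,H_{1})$ is a PV extension. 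This yields the equivalence in~(2) and the displayed isomorphism, via the identification of $H'$ with $H_{1}$.
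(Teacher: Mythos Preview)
The paper does not give its own proof of this theorem: it is stated in Section~\ref{sec:review} as a result quoted from \cite{AM} (Theorem~3.9 there), and no argument is supplied beyond the supplementary Remark~\ref{rem:inverse}. There is therefore nothing in the present paper to compare your attempt against.

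As to your sketch on its own merits: the overall architecture---pass from intermediate $M$ to a Hopf quotient $H\twoheadrightarrow H_M$ via $AM\otimes_M AM$, and conversely from a Hopf ideal $I$ to $M_I=Q(A^{\mathrm{co}\,H/I})$, then check the two constructions are inverse---is the standard Hopf--Galois strategy and is essentially what \cite{AM} carries out. You are candid that the injectivity direction (recovering $M$ from $\mathbf{G}(L/M)$, i.e.\ $L^{\mathbf{G}(L/M)}=M$) is the crux, and you do not actually prove it; the parenthetical reduction to the finitely generated case ``settled by a dimension count on torsors'' is not an argument either. This is a genuine gap: in \cite{AM} the point is handled via the explicit description of the inverse map (the content recorded here as Remark~\ref{rem:inverse}(1)) together with the $D$-simplicity of $A$, not by a dimension count. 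Your simplicity argument for $AM$ also needs care: from $JL=L$ you obtain a non-zero-divisor $s\in J\cap A$, but you must first know $J\cap A\ne A$; since $A\subset AM$ and $J\ne AM$, one should argue that $1\notin J$ forces $1\notin J\cap A$, which is immediate, so this step is fine once phrased correctly. In Part~(2), the claim that one ``may take $A'=A\cap M$'' requires justification (uniqueness of the principal algebra), and the identification of $H'$ with the right $\mathbf{N}$-coinvariants in $H$ is asserted rather than shown. In short, the outline is sound and close in spirit to \cite{AM}, but several load-bearing steps are only gestured at.
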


\begin{remark}\label{rem:inverse}
In the situation of the theorem above, let $M$ be an intermediate AS $D$-module algebra in $L/K$. 
The closed subgroup scheme $\mathbf{G}(L/M)$ of $\mathbf{G}(L/K)$ is of the 
form $\mathrm{Spec}_C(H/\mathfrak{a})$, where $\mathfrak{a}$ is a Hopf ideal of $H$. 
\begin{itemize}
\item[(1)] (cf.~\cite[Lemma 4.18]{H})\ By \cite[Theorem 3.9(1)]{AM}, $M$ recovers from $\mathfrak{a}$ as the
set consisting of all elements $x \in L$ such that
\[ x \otimes 1 \equiv 1 \otimes x \mod{\mathfrak{a}\, (L \otimes_K L)}\quad \text{in} \quad L \otimes_K L. \]
Suppose that $x= a/b$, where $a, b \in A$ with $b$ a non-zero divisor. Then one sees that
the last condition is equivalent to
\[ (a\otimes 1) \, \theta(b) \equiv (b\otimes 1) \, \theta(a) \mod{A \otimes_C \mathfrak{a}}\quad \text{in}\quad A \otimes_C H. \]
\item[(2)] Assume that $M/K$ is a PV extension, or equivalently, $\mathbf{G}(L/M)$ is normal in $\mathbf{G}(L/K)$.  
Then one sees from \cite[p.756, line --12]{AM} that the principal $D$-module algebra for $M/K$ consists 
of all elements $a \in A$ such that 
\[ \theta(a) \equiv a \otimes 1 \, \mod{A \otimes_C \mathfrak{a}}\quad \text{in}\quad A \otimes_C H. \]
\end{itemize}
\end{remark}

We will see that the theorem and the remark above imply Parts 1, 2 of Theorem 4.20 of
\cite{H}, which will be reproduced as Theorem \ref{thm:HardouinThm4.20}.  For the remaining 
Part 3, we prove the following. 

\begin{proposition}\label{prop:separable}
Let $(L/K, A, H)$ be a PV extension of AS $D$-module algebras. Assume that
$K$ is a field, and the field $C$ is perfect. Then the following are equivalent:
\begin{itemize}
\item[(a)] $H$ is reduced;
\item[(b)] $A \otimes_K\widetilde{K}$ is reduced for any field extension $\widetilde{K}/K$; 
\item[(c)] $L \otimes_K\widetilde{K}$ is reduced for any field extension $\widetilde{K}/K$. 
\end{itemize}
\end{proposition}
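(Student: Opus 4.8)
The plan is to prove the chain of implications $(a)\Rightarrow(c)\Rightarrow(b)\Rightarrow(a)$, exploiting the $H$-Galois isomorphism ${}_A\theta : A\otimes_K A \xrightarrow{\sim} A\otimes_C H$ and the faithful flatness of $A$ over $K$ (which holds because $A/K$ is an $H$-torsor, hence $A$ is faithfully flat over $K$). The key structural input is that $A$ is the principal $D$-module algebra, $A = A_1\times\cdots\times A_r$ with each $A_i$ an integral domain finitely(?) — in any case an integral domain — with quotient field $L_i$, and $C = A^D$ is the field of constants, here assumed perfect.

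First I would record the base-change compatibility of the Galois isomorphism: for any field extension $\widetilde K/K$, tensoring ${}_A\theta$ with $\widetilde K$ over $K$ gives $(A\otimes_K\widetilde K)\otimes_{\widetilde K}(A\otimes_K\widetilde K) \cong (A\otimes_K\widetilde K)\otimes_C H$ as algebras over $A\otimes_K\widetilde K$. Hence, since a commutative ring $B$ is reduced whenever $B\otimes_B B$ — no; the useful direction is: if $A\otimes_K\widetilde K$ is reduced then so is the base change, but I actually want to relate reducedness of $A\otimes_K\widetilde K$ to reducedness of $H$. For $(a)\Rightarrow(b)$: assume $H$ is reduced; since $C$ is perfect, $H$ is in fact geometrically reduced over $C$, so $H\otimes_C \widetilde C$ is reduced for every field extension; combining this with the isomorphism $A\otimes_K A\cong A\otimes_C H$ and a faithfully flat descent argument along $K\to A$, one deduces that $A\otimes_K\widetilde K$ is reduced for every $\widetilde K/K$. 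Concretely: $(A\otimes_K\widetilde K)\otimes_{\widetilde K}(A\otimes_K\widetilde K)\cong(A\otimes_K\widetilde K)\otimes_C H$ is a tensor product over $\widetilde K$ (a field) of a ring with a reduced ring after suitable identification, and one checks reducedness descends from this self-tensor-square because $A\otimes_K\widetilde K$ is a subring of it (via $b\mapsto b\otimes 1$) — wait, a subring of a reduced ring is reduced, so it suffices that $(A\otimes_K\widetilde K)\otimes_C H$ be reduced, and that follows since $A\otimes_K\widetilde K$ embeds in its own square and... I would instead argue that $(A\otimes_K\widetilde K)\otimes_C H$ is reduced because $H$ is geometrically reduced and $A\otimes_K\widetilde K$ — hmm, this needs $A\otimes_K\widetilde K$ reduced, which is circular. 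The clean fix: first prove $(a)\Rightarrow(b)$ in the special case $\widetilde K = K$ is unnecessary; instead use that $A\otimes_C H$, being $\cong A\otimes_K A$, and that $A$ itself is reduced (a product of domains), plus $H$ geometrically reduced over $C=A^D$, gives $A\otimes_C H$ reduced — this does not need $A\otimes_K\widetilde K$. Then for general $\widetilde K$, base change: $A\otimes_K\widetilde K$ embeds into $(A\otimes_K\widetilde K)\otimes_{\widetilde K}(A\otimes_K\widetilde K)\cong (A\otimes_K\widetilde K)\otimes_C H \cong (A\otimes_C H)\otimes_K\widetilde K$, and the last ring is reduced iff $A\otimes_C H$ is geometrically reduced over $K$, which I can arrange since $A\otimes_C H$ is a localization-friendly finite product of domains tensored with a geometrically reduced $C$-algebra — this is where the hypotheses $K$ a field and $C$ perfect do the real work, via the standard fact that a finitely generated (or arbitrary, using a limit argument) algebra over a perfect field that is reduced is geometrically reduced.

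For $(b)\Rightarrow(c)$: this is the easy implication — $L = Q(A)$ is a localization of $A$, so $L\otimes_K\widetilde K$ is a localization of $A\otimes_K\widetilde K$, and a localization of a reduced ring is reduced. For $(c)\Rightarrow(a)$: take $\widetilde K$ to be an algebraic closure, or better a perfect closure composed with an algebraic closure, of $K$; since $L\otimes_K\widetilde K$ is reduced and $A\hookrightarrow L$, also $A\otimes_K\widetilde K$ is reduced (subring); now I need to get reducedness of $H$ out of this. Use ${}_A\theta\otimes_K\widetilde K$: $(A\otimes_K\widetilde K)\otimes_C H \cong (A\otimes_K\widetilde K)\otimes_{\widetilde K}(A\otimes_K\widetilde K)$, and the right-hand side is the self-tensor-square over the field $\widetilde K$ of the reduced $\widetilde K$-algebra $A\otimes_K\widetilde K$; choosing $\widetilde K$ large enough (containing an algebraically closed field) makes $A\otimes_K\widetilde K$ geometrically reduced, so its square is reduced; hence $(A\otimes_K\widetilde K)\otimes_C H$ is reduced, and since $A\otimes_K\widetilde K$ is faithfully flat over $C$ (it contains $C$, nonzero, and is flat), reducedness of $(A\otimes_K\widetilde K)\otimes_C H$ forces reducedness of $H$ by faithfully flat descent of the reducedness property (equivalently, $H\hookrightarrow (A\otimes_K\widetilde K)\otimes_C H$ since $A\otimes_K\widetilde K$ is a faithfully flat, hence injective-on-tensoring, $C$-module).

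The main obstacle I anticipate is the bookkeeping around \emph{geometric} reducedness versus plain reducedness: the proposition only assumes $H$ reduced (not geometrically reduced), so in $(a)\Rightarrow(b)$ I must upgrade this using perfectness of $C$, and I must be careful that $H$ need not be finitely generated over $C$ (since $L/K$ is not assumed finitely generated), so I should phrase the geometric-reducedness upgrade as a filtered colimit of finitely generated sub-Hopf-algebras or sub-$C$-algebras and invoke that reducedness and geometric reducedness both commute with filtered colimits. A secondary subtlety is that $A$ is only a finite product of domains rather than a domain, so "geometrically reduced" for $A$ over $C$ must be checked factor-by-factor, using that each $A_i$ is a domain with $A_i^{\,?}$-constants — but since $C\subset A_i$ and $C$ is perfect, each $A_i\otimes_C\widetilde C$ is reduced, and a finite product of reduced rings is reduced, so this goes through. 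Once these geometric-reducedness lemmas are in place, every implication reduces to "subring of reduced is reduced," "localization of reduced is reduced," "faithfully flat base ring detects reducedness," and the base-changed Galois isomorphism, all of which are routine.
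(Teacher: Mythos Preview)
Your approach is the same as the paper's—both hinge on the isomorphism ${}_A\theta : A\otimes_K A \cong A\otimes_C H$, plus the facts that $C$ perfect upgrades ``reduced'' to ``geometrically reduced'' and that localization/subring preserve reducedness. The paper compresses everything into two lines: (a)~$\Leftrightarrow$~(b) ``follows from ${}_A\theta$'', and (b)~$\Leftrightarrow$~(c) because $L\otimes_K\widetilde K$ is a localization of $A\otimes_K\widetilde K$ while $A\otimes_K\widetilde K$ is a subring of $L\otimes_K\widetilde K$.

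Your (a)~$\Rightarrow$~(b) has a genuine gap. You correctly spot the circularity in the naive argument, but your proposed fix—``$A\otimes_C H$ is geometrically reduced over $K$, since it is a finite product of domains tensored with a geometrically reduced $C$-algebra''—only establishes geometric reducedness over $C$, not over $K$. Your ``obstacle'' paragraph confirms this: you check $A_i\otimes_C\widetilde C$ reduced, but what (b) demands is $A_i\otimes_K\widetilde K$ reduced for extensions $\widetilde K/K$, and $K$ need not be perfect. Passing from $C$ to $K$ is precisely the content of (b), so this is circular.

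The clean repair (implicit in the paper's one-liner) avoids the base change to $\widetilde K$ entirely. From (a) and $C$ perfect you get $H$ geometrically reduced over $C$; since $A$ is reduced, $A\otimes_C H$ is reduced, hence $A\otimes_K A$ is reduced via ${}_A\theta$. Now use that $A=\prod_i A_i$ with each $A_i$ a domain of fraction field $L_i$: the localization $L_i\otimes_K L_i$ of $A_i\otimes_K A_i$ is still reduced, which says exactly that the field extension $L_i/K$ is separable, i.e.\ $A_i$ (and hence $A$) is geometrically reduced over $K$. That is (b). Your (b)~$\Rightarrow$~(c) and (c)~$\Rightarrow$~(a) are fine, though the latter is more work than needed: once you have $A\otimes_K\widetilde K$ reduced for \emph{one} $\widetilde K\supset K$ (even $\widetilde K=K$), you get $H\hookrightarrow A\otimes_C H\cong A\otimes_K A$ reduced directly, without the detour through an algebraic closure.
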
 
\begin{proof}
The equivalence (a) $\Leftrightarrow$ (b) follows from the $A$-algebra isomorphism ${}_A\theta :
A \otimes_K A \overset{\simeq}{\longrightarrow} A \otimes_C H$. Obviously, (c) $\Rightarrow$ (b). 
The converse holds true since $L \otimes_K\widetilde{K}$ is a localization of $A \otimes_K\widetilde{K}$
by non-zero divisors. 
\end{proof}

\begin{lemma}[\cite{AM}, Corollary~4.8]\label{lem:finite_generation}
Given a PV extension $(L/K, A, H)$ of AS $D$-module algebras, the following are equivalent:
\begin{itemize}
\item[(a)] $L$ is the smallest AS $D$-module subalgebra in $L$ that includes $K$ and some finitely many
elements in $L$; 
\item[(b)] $L$ is the total quotient ring of some finitely generated $K$-subalgebra of $L$;
\item[(c)] $A$ is finitely generated as a $K$-algebra;
\item[(d)] $H$ is finitely generated as a $C$-algebra, or in other words, $\mathbf{G}(L/K)$ is 
algebraic. 
\end{itemize}
\end{lemma}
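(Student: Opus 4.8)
The plan is to prove the equivalences by cycling through the four conditions, using the Hopf-Galois isomorphism ${}_A\theta : A \otimes_K A \overset{\simeq}{\to} A \otimes_C H$ as the main bridge between properties of $H$ and finiteness properties of $A$. The implications (b) $\Rightarrow$ (a), (a) $\Rightarrow$ (c) involve only routine manipulations with the AS $D$-module algebra structure: if $L$ is generated over $K$ as an AS $D$-module subalgebra by finitely many elements, one pulls these back to elements of $A$ (using that $A$ contains all primitive idempotents of $L$ and that $L = Q(A)$), adjoins also the finitely many $D$-translates needed to make the generated subalgebra $D$-stable, and notes that the $D^1$-part of $D$, acting by ``derivation-like'' operators, produces only finitely many new generators because each element of the Birkhoff-Witt coalgebra $D^1$ lowers a degree; this is where I would lean on the structure of $D = D^1 \# \Bbbk G$ recalled in Section~\ref{sec:review}.

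The heart of the argument is the equivalence (c) $\Leftrightarrow$ (d). For (c) $\Rightarrow$ (d): if $A = K[a_1, \dots, a_n]$, then applying $\theta$ and using that $\mu : A \otimes_C H \overset{\simeq}{\to} A \otimes_K A$ is bijective, one shows that $H$ is generated over $C$ by the finitely many ``matrix coefficients'' appearing in $\theta(a_1), \dots, \theta(a_n)$ together with finitely many denominators; concretely, choose an $A$-basis-like spanning family and express the structure constants. For (d) $\Rightarrow$ (c): here I would use that $A/K$ is $H$-Galois, so $A \otimes_K A \cong A \otimes_C H$ is a finitely generated $A$-algebra when $H$ is finitely generated over $C$; faithfully flat descent along $K \to A$ (note $A$ is a finite product of fields, hence $K \to A$ is faithfully flat, even finite if $K$ is a field, and in general faithfully flat since $A^D = C$ controls things — more carefully, one uses that $A \otimes_K A$ f.g.\ over $A$ plus faithful flatness of $A$ over $K$ forces $A$ f.g.\ over $K$) then yields that $A$ is finitely generated as a $K$-algebra.

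The remaining link is (c) $\Rightarrow$ (b), which is immediate since $L = Q(A)$, and (b) $\Rightarrow$ (a), since the total quotient ring of a finitely generated $K$-subalgebra is contained in, hence (being total and $D$-stable once we enlarge by the finitely many $D$-translates of the generators) equals, the AS $D$-module subalgebra generated by those generators; one must check $D$-stability is achieved with finitely many elements, again by the Birkhoff-Witt/degree-lowering property of $D^1$ and finiteness of relevant orbits under $G$. Alternatively, and more cleanly, I would organize the proof as (a) $\Rightarrow$ (c) $\Rightarrow$ (d) $\Rightarrow$ (c) is not needed — rather (d) $\Rightarrow$ (b) $\Rightarrow$ (a) $\Rightarrow$ (c) $\Rightarrow$ (d), choosing whichever direction of each implication is cheapest.

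I expect the main obstacle to be the descent step in (d) $\Rightarrow$ (c): passing from ``$A \otimes_K A$ is a finitely generated $A$-algebra'' to ``$A$ is a finitely generated $K$-algebra'' is not formal in general and genuinely uses that $K \to A$ is faithfully flat (which holds because $A$ is a nonzero — in fact AS — $D$-module algebra over the field, or more generally total ring, $K$, so it is $K$-flat and nonzero) together with the fact that finite generation descends along faithfully flat ring maps. A secondary, more bookkeeping-type difficulty is making precise in (a) $\Leftrightarrow$ (b) that only finitely many $D$-translates are required to close up under the $D$-action; this is exactly the point where assumption \eqref{BW_assumption} and the decomposition $D = D^1 \# \Bbbk G$ enter, and it should be dispatched by a short induction on the (finitely many) generators.
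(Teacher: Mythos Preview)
The paper does not prove this lemma; it merely cites \cite[Corollary~4.8]{AM}, so there is no in-paper proof to compare against. Evaluating your proposal on its own merits:

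Your treatment of (c) $\Leftrightarrow$ (d) via the Hopf--Galois isomorphism ${}_A\theta$ and faithfully flat descent is correct; $A$ is free over $K$ by \cite[Corollary~2.5]{AM}, hence faithfully flat, and finite generation of algebras descends along faithfully flat base change. The chain (c) $\Rightarrow$ (b) $\Rightarrow$ (a) is also fine, and for (b) $\Rightarrow$ (a) no $D$-translates are needed at all: if $L = Q(K[b_1,\dots,b_m])$ then any AS (hence total) $D$-module subalgebra of $L$ containing $K$ and the $b_i$ already contains $L$.

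The genuine gap is in closing the cycle at (a) $\Rightarrow$ (c). Your justification---that the Birkhoff--Witt hypothesis forces $D^1$ to act by ``degree-lowering'' operators, so that only finitely many $D$-translates suffice---is not valid. Assumption \eqref{BW_assumption} is a condition on the \emph{coalgebra} structure of $D^1$ and says nothing about how $D^1$ acts on any particular module algebra; there is no intrinsic degree being lowered. For a general $D$-module algebra the $D$-stable $K$-subalgebra generated by finitely many elements need not be finitely generated (e.g.\ $D = \Bbbk[\partial]$ with $\partial$ primitive, acting on $\Bbbk[x_1,x_2,\dots]$ via $\partial x_i = x_{i+1}$). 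The argument must instead exploit what is special about $A$: it carries the $H$-comodule structure $\theta$, and every element of a comodule lies in a finite-dimensional subcomodule. In \cite{AM} this result is numbered \emph{after} the characterization theorem (Theorem~\ref{thm:charac} here) and is deduced through it; the finiteness enters from the comodule side, not from any direct control of the $D$-action on generators. Your proposal treats (a) $\Rightarrow$ (c) as ``routine manipulation'' and the $D$-closure step as ``bookkeeping,'' but this is exactly where the substance lies.
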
  

If the equivalent conditions above are satisfied, we say that the PV extension is \emph{finitely generated}. 

Let $K$ be a $D$-module algebra. Then one constructs the smash-product algebra $K \# D$; recall from 
\cite[p.~153]{S2} that 
it is generated by $K$, $D$ subject to the relation $d x = \sum (d_1 \rightharpoonup x)d_2,\ d \in D, x\in K$. 
One sees that $K$ is a left $K \# D$-module by
\begin{equation}\label{module}
(x \# d) \rightharpoonup y := x(d \rightharpoonup y), \quad d \in D,\ x, y \in K.
\end{equation}
If $V$ is a left $K\# D$-module and if $L$ is a $D$-module algebra including $K$, 
then $L \otimes_K V$ is naturally a left $L \# D$-module, on which $D$ acts diagonally.  

Assume that $K$ is AS. Then every left $K \# D$-module $V$ is free as a left $K$-module \cite[Corollary 2.5]{AM}.
Assume that $V$ has a finite $K$-free rank, say $n$. 
Let $L$ be an AS $D$-module algebra including $K$.  We call $L$ 
a \emph{splitting algebra} for $V$ \cite[Definition 4.1]{AM}, if there is an isomorphism
$L \otimes_K V \simeq L^n$ of left $L\# D$-modules, where $L^n$ denotes the direct sum of $n$
copies of $L$. Choose a $K$-free basis $v_1,\dots,v_n$ of $V$, and set
$\boldsymbol{v} ={}^t(v_1,\dots , v_n)$. Then the $D$-module structure on $V$ is represented uniquely by 
$n\times n$ matrices $\mathbf{M}(d)$, $d \in D$, with entries in $K$, so that
\begin{equation}\label{equation}
d \rightharpoonup \boldsymbol{v} = \mathbf{M}(d)\, \boldsymbol{v}, \quad d \in D. 
\end{equation}
The set $\operatorname{Hom}_{K\# D}(V, L)$ of all $K \# D$-linear maps $V \to L$ naturally forms a
vector space over the field $L^D$, whose dimension is at most $n$. This is embedded into 
$L^n$ via 
$f \mapsto {}^t ( f(v_1), \dots, f(v_n) )$, where $f \in \operatorname{Hom}_{K \# D} (V, L)$. An 
element $\boldsymbol{x} =  {}^t ( x_1, \dots, x_n ) \in L^n$  is in $\operatorname{Hom}_{K \# D} (V, L)$
if and only if $\boldsymbol{x}$ is a solution of the equation \eqref{equation}, i.e., 
$d \rightharpoonup \boldsymbol{x} = \mathbf{M}(d)\, \boldsymbol{x}$, $d \in D$. Thus, 
$\operatorname{Hom}_{K \# D} (V, L)$ is the solution space for \eqref{equation}. By \cite[Lemma 4.2]{AM}, 
$L$ is a splitting algebra for $V$ if and only if the $L^D$-dimension of 
$\operatorname{Hom}_{K \# D} (V, L)$ is the largest possible, i.e., equals $n$. If this is the case,
the splitting algebra $L$ for $V$ is said to be \emph{minimal} \cite[Definition 4.3]{AM} 
provided $L$ is the total quotient ring of the $D$-module $K$-subalgebra generated by all 
$f(V)$, $f \in \operatorname{Hom}_{K \# D} (V, L)$. 

\begin{theorem}[\textbf{Characterization}--\cite{AM}, Theorem~4.6]\label{thm:charac}
Let $K \subset L$ be an inclusion of AS $D$-module algebras, and assume $K^D = L^D$. Then
$L/K$ be a finitely generated PV extension if and only if $L$ is a minimal splitting algebra for
some $K\# D$-module with finite $K$-free rank.
\end{theorem}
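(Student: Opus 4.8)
The plan is to prove the two implications separately. The forward one amounts to extracting a finite-rank piece of a PV extension's torsor structure; the converse, which is the substantial direction, consists in building such a torsor structure by hand out of a fundamental solution matrix.

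So suppose first that $(L/K,A,H)$ is a finitely generated PV extension. Then $H$ is a finitely generated commutative $C$-algebra (Lemma~\ref{lem:finite_generation}), so one may pick a finite-dimensional right $H$-comodule $W$ over $C$, with coaction $\rho_W$, whose matrix of structure constants generates $H$ as a $C$-algebra; equivalently, the affine group scheme $\mathbf{G}(L/K)$ has a faithful $n$-dimensional representation, $n:=\dim_C W$. Form the associated bundle
\[
V:=\bigl\{\,\xi\in A\otimes_C W \;\bigm|\; (\theta\otimes\mathrm{id}_W)(\xi)=(\mathrm{id}_A\otimes\rho_W)(\xi)\,\bigr\}.
\]
Since $A/K$ is $H$-Galois and $A$ is faithfully flat over $K$ (being a nonzero module over $K$, which is a finite product of fields), faithfully flat descent gives a $D$-linear $A$-module isomorphism $A\otimes_K V\xrightarrow{\ \simeq\ }A\otimes_C W$, where $D$ acts diagonally on the left and through the first tensor factor on the right; hence $V$ is a $K\#D$-module, $K$-free of rank $n$, and base-changing to $L$ gives $L\otimes_K V\simeq L\otimes_C W\simeq L^{\,n}$ as $L\#D$-modules. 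Thus $L$ is a splitting algebra for $V$. Finally, using the faithfulness of $W$ one checks that the $D$-module $K$-subalgebra $A'$ of $L$ generated by $\bigcup_f f(V)$ contains a set of $K$-algebra generators of $A$, so that $Q(A')=Q(A)=L$; hence $L$ is a minimal splitting algebra for $V$.

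Conversely, let $L$ be a minimal splitting algebra for a $K\#D$-module $V$ of $K$-free rank $n$; fix a $K$-basis $v_1,\dots,v_n$ with representing matrices $\mathbf{M}(d)$ as in \eqref{equation}, and put $C:=K^D=L^D$. By \cite[Lemma~4.2]{AM} the solution space $\operatorname{Hom}_{K\#D}(V,L)\subseteq L^{\,n}$ is $n$-dimensional over $C$, and unwinding the $L\#D$-isomorphism $L\otimes_K V\simeq L^{\,n}$ produces a matrix $X\in\mathrm{GL}_n(L)$ whose columns form a $C$-basis of this space and which satisfies $d\rightharpoonup X=\mathbf{M}(d)\,X$ for all $d\in D$. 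Because $d\rightharpoonup f(v_i)=\sum_j\mathbf{M}(d)_{ij}\,f(v_j)$ with $\mathbf{M}(d)_{ij}\in K$, the $D$-module $K$-subalgebra of $L$ generated by all $f(V)$ equals the finitely generated $K$-subalgebra $A_0$ generated by the entries of $X$, and $Q(A_0)=L$ by minimality. Adjoining $(\det X)^{-1}$---legitimate because $\det X$ is a unit of $L$, and $D$-equivariantly so, since $d\rightharpoonup\det X$ is a $K$-multiple of $\det X$ for group-like or primitive $d$---we obtain a $D$-module subalgebra $A:=A_0[(\det X)^{-1}]$ of $L$ with $Q(A)=L$, finitely generated over $K$, containing the entries of $X$ and of $X^{-1}$, and satisfying $A^D=A\cap L^D=C$.

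It remains to verify condition (ii)(a) of the definition of a PV extension for this $A$. Put $H:=(A\otimes_K A)^D$ and $Y:=(X^{-1}\otimes 1)(1\otimes X)\in M_n(A\otimes_K A)$. Using $d\rightharpoonup X=\mathbf{M}(d)X$, the identity $\sum(d_1\rightharpoonup X^{-1})\,\mathbf{M}(d_2)=\varepsilon(d)X^{-1}$ obtained by applying $d$ to $X^{-1}X=I$, and the diagonal $D$-action on $A\otimes_K A$, a direct computation gives $d\rightharpoonup Y=\varepsilon(d)Y$, and likewise $d\rightharpoonup Y^{-1}=\varepsilon(d)Y^{-1}$ for $Y^{-1}=(1\otimes X^{-1})(X\otimes 1)$; hence the entries of $Y$ and $Y^{-1}$ lie in $H$. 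From $1\otimes X=(X\otimes 1)Y$ and $\det(1\otimes X)=(\det X\otimes 1)\det Y$ one then expresses each $1\otimes x_{ij}$ and $1\otimes(\det X)^{-1}$ inside $(A\otimes 1)\cdot H$; since $(A\otimes 1)\cdot H$ is a subalgebra of $A\otimes_K A$ (the ring being commutative and $H$ a subalgebra) containing $A\otimes 1$ and $1\otimes A$, it is all of $A\otimes_K A$, which is exactly condition (ii)(a). Together with $K^D=L^D$ and $Q(A)=L$ this makes $(L/K,A,H)$ a PV extension, finitely generated because $A$ is a finitely generated $K$-algebra (Lemma~\ref{lem:finite_generation}). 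The main obstacle is this converse, and within it the construction of the \emph{invertible} fundamental matrix $X$: the splitting hypothesis is used precisely---via \cite[Lemma~4.2]{AM}---to make $\operatorname{Hom}_{K\#D}(V,L)$ large enough (namely $n$-dimensional over $C$) to assemble an element of $\mathrm{GL}_n(L)$, and it is this invertibility that powers the comparison matrix $Y$ and hence (ii)(a); the forward implication is formal once one notes that $\mu$ (equivalently $\theta$) intertwines the diagonal $D$-action on $A\otimes_K A$ with the first-factor action on $A\otimes_C H$.
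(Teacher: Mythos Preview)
The paper does not itself prove this theorem; it is quoted from \cite[Theorem~4.6]{AM}. The only indication the paper gives of the argument is the paragraph immediately following the statement, which records what the converse direction of the proof produces: from a minimal splitting algebra $L$ one extracts an invertible fundamental solution matrix $X$, and the $K$-subalgebra $A$ of $L$ generated by the entries of $X$ together with $(\det X)^{-1}$ is the principal $D$-module algebra for $L/K$. Your converse direction follows precisely this line, and your verification of condition~(ii)(a) via the comparison matrix $Y=(X^{-1}\otimes 1)(1\otimes X)$ is the standard computation; so on the substantive direction you are in agreement with what the paper sketches.

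One step needs repair. Your justification that $A=A_0[(\det X)^{-1}]$ is $D$-stable---``since $d\rightharpoonup\det X$ is a $K$-multiple of $\det X$ for group-like or primitive $d$''---is correct for such $d$ (for grouplike $g$ one gets $\det\mathbf{M}(g)\cdot\det X$, for primitive $d$ one gets $\operatorname{tr}\mathbf{M}(d)\cdot\det X$), but grouplikes and primitives do \emph{not} generate $D$ in general: when $\operatorname{char}\Bbbk>0$ the irreducible component $D^{1}$ is a divided-power coalgebra satisfying \eqref{BW_assumption}, not a universal envelope. The correct argument, which is \cite[Lemma~2.7]{AM} and is replayed in this paper as Proposition~\ref{prop:localization}, passes to the convolution algebra $\operatorname{Hom}(D,L)$: the algebra map $x\mapsto[d\mapsto d\rightharpoonup x]$ sends $\det X$ to an element whose restriction to the coradical $\Bbbk G$ is invertible (by your grouplike computation), and invertibility then propagates up the coradical filtration because the kernel of each restriction map is nilpotent. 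With this adjustment your proof is complete.

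Your forward direction---building $V$ as the associated bundle $A\,\Box_H\,W$ for a faithful finite-dimensional $H$-comodule $W$ and invoking faithfully flat descent---is a legitimate route; the paper gives no hint of how \cite{AM} handles this half, so no comparison is possible here beyond noting that your argument is sound once one checks (as you assert) that faithfulness of $W$ forces the solution images $f(V)$ to generate $A$ up to total quotients.
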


Suppose that we are in the situation of the theorem above.
As the proof of the theorem shows, 
if $L$ is a minimal splitting algebra for $V$ as above, then 
$L^D$-linearly independent $n$ solutions 
$\boldsymbol{x}_j={}^t(x_{1j},\dots, x_{nj})$, $1 \le j \le n$, of \eqref{equation} in $L^n$ give an 
invertible $n \times n$ matrix $X = (x_{ij})$ with entries in $L$, and the $K$-subalgebra $A$ of $L$
generated by all entries $x_{ij}$ in $X$ together with $1/\det X$, turns into the principal $D$-module algebra for
the PV extension $L/K$.  Thus, in terminology of the standard Picard-Vessiot theories including
Hardouin's, $L/K$ is a \emph{Picard-Vessiot extension} for $V$ or for \eqref{equation}, with a \emph{fundamental
solution matrix} $X$ and a \emph{Picard-Vessiot ring} $A$. 
Conversely, every Picard-Vessiot extension or ring
in the standard sense arises in this manner.  

\begin{theorem}[\textbf{Unique existence}--\cite{AM}, Theorem~4.11]\label{thm:unique_exist}
Let $K$ be an AS $D$-module algebra. Assume that the field $K^D$ is algebraically closed. Then
for every left $K \# D$-module $V$ with finite $K$-free rank, there exists uniquely (up to
$D$-module algebra isomorphism over $K$) a minimal splitting algebra for $V$. 
\end{theorem}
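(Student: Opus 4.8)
The plan is to prove Theorem \ref{thm:unique_exist} (unique existence of a minimal splitting algebra) in two parts: \textbf{existence} and \textbf{uniqueness}. Throughout, write $C = K^D$, which is by hypothesis algebraically closed, and let $V$ be a left $K\# D$-module free of rank $n$ over $K$, represented by matrices $\mathbf{M}(d)$, $d \in D$, as in \eqref{equation}.

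\textbf{Existence.} First I would build a candidate universal object in the spirit of Picard-Vessiot ring constructions. Form the polynomial algebra $K[X_{ij}, 1/\det]$ in $n^2$ indeterminates $X_{ij}$ arranged in a matrix $X = (X_{ij})$, localized at $\det X$. Declare a $D$-action on it by decreeing that the columns of $X$ are solution vectors of \eqref{equation}, i.e. $d \rightharpoonup X = \mathbf{M}(d)\, X$ extended as a $D$-module algebra (this is well defined because the cocycle-type identities satisfied by the $\mathbf{M}(d)$ coming from $V$ being a $K\# D$-module guarantee consistency; one uses here the smash-product relation and cocommutativity, exactly as in the discussion following Theorem \ref{thm:charac}). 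This $\Omega := K[X_{ij}, 1/\det]$ is a $D$-module algebra containing $K$, but it is typically not simple. Choose a maximal $D$-stable ideal $\mathfrak{m}$ of $\Omega$ (Zorn's lemma; properness is ensured because $1 \notin$ any $D$-stable ideal generated properly, and one checks the relevant ideal is proper), and set $A := \Omega/\mathfrak{m}$. Then $A$ is a simple $D$-module algebra, finitely generated over $K$. One must show $A$ is AS, i.e. artinian; this follows because $A$ is a finitely generated simple $D$-module algebra over the AS algebra $K$, hence artinian by the structure theory recalled before Theorem \ref{thm:Gal_corresp} (a simple $D$-module algebra that is a finitely generated $K$-algebra with $K$ artinian is itself artinian — or one argues via \cite[Corollary 2.5]{AM}-type results). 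Crucially, the condition $A^D = C$: since $K^D = C$ is algebraically closed, and $A^D$ is a field (as $A$ is simple) that is algebraic over $C$ — here one needs that $A^D$ is algebraic over $C = K^D$, which uses a dimension/finiteness argument on the solution space, bounded by $n$ as in the paragraph before Theorem \ref{thm:unique_exist} — we get $A^D = C$. Now let $L := Q(A)$ be the total quotient ring; then $L$ is an AS $D$-module algebra, $L^D = C$, and by construction the $n$ columns of (the image of) $X$ are $n$ solutions of \eqref{equation} that are $C$-linearly independent (their linear independence over $L^D$ follows from $\det X$ being invertible in $A \subset L$, via the standard Wronskian-type argument). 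Hence $\dim_{L^D}\operatorname{Hom}_{K\# D}(V, L) = n$, so $L$ is a splitting algebra by \cite[Lemma 4.2]{AM}, and it is minimal because $A$, hence $L = Q(A)$, is generated over $K$ by the $f(V)$, $f \in \operatorname{Hom}_{K\# D}(V,L)$ (the entries $x_{ij}$ together with $1/\det X$ arise as coordinates of these $f$'s).

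\textbf{Uniqueness.} Suppose $L$ and $L'$ are two minimal splitting algebras for $V$, with the associated principal $D$-module subalgebras $A \subset L$ and $A' \subset L'$ (generated by a fundamental solution matrix and the inverse of its determinant, as recalled after Theorem \ref{thm:charac}). The idea is to compare them inside the $D$-module algebra $A \otimes_K A'$ (with diagonal $D$-action), or a suitable simple quotient thereof. Let $\mathfrak{n}$ be a maximal $D$-stable ideal of $A \otimes_K A'$ and put $B := (A\otimes_K A')/\mathfrak{n}$, a simple $D$-module algebra with two $D$-algebra maps $\iota: A \to B$, $\iota': A' \to B$. Comparing the two fundamental solution matrices over $B$: both $\iota(X)$ and $\iota'(X')$ satisfy \eqref{equation} over $B$, so $\iota'(X')^{-1}\iota(X)$ has entries in $B^D$; but $B^D$ is a field algebraic over $C$ (again by the degree bound and simplicity of $B$), and $C$ is algebraically closed, so $B^D = C$ and $\iota'(X')^{-1}\iota(X) \in GL_n(C)$. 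This forces $\iota(A) = \iota'(A')$ inside $B$ (each is the $K$-subalgebra generated by the entries of the fundamental matrix and inverse determinant, and these agree up to the constant matrix in $GL_n(C)$). Since $A$, $A'$ are simple, $\iota$ and $\iota'$ are injective, so they induce an isomorphism $A \cong A'$ of $D$-module algebras over $K$; passing to total quotient rings gives $L \cong L'$.

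\textbf{Main obstacle.} The crux throughout — both in existence (getting $A^D = C$, hence that $L$ is genuinely a splitting \emph{and} that $L^D = C$) and in uniqueness (getting $B^D = C$) — is controlling the $D$-invariants of the various simple $D$-module algebras: one must show these invariant fields are algebraic over $C = K^D$, so that the hypothesis that $C$ is algebraically closed can be brought to bear. This rests on the finiteness/dimension bound $\dim_{L^D}\operatorname{Hom}_{K\# D}(V,L) \le n$ together with the fact that a $D$-module algebra generated over $K$ by finitely many solution-data has invariants that embed into such a finite-dimensional space — an argument of Picard-Vessiot type that I expect to be the technically delicate step, and where I would lean on \cite[Lemma 4.2]{AM} and the structure theory of AS $D$-module algebras recalled in Section \ref{sec:review}. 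A secondary subtlety is verifying that the $D$-action on the universal algebra $\Omega = K[X_{ij}, 1/\det]$ is well defined, which uses the identities among the $\mathbf{M}(d)$ forced by the smash-product relation and cocommutativity of $D$.
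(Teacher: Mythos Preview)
The paper does not give its own proof of this theorem: it is stated in Section~\ref{sec:review} purely as a result \emph{reproduced} from \cite[Theorem~4.11]{AM}, with no accompanying proof environment. So there is nothing in the present paper to compare your argument against line by line.

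That said, the paper does allude to the structure of the proof in \cite{AM} once, in the positive-characteristic part of the proof of Claim~\ref{claim}: ``As is seen from the proof of \cite[Theorem 4.11]{AM}, $L$ is the total quotient ring of a simple $D$-module algebra $A$ including $K$ \dots\ $A$ is a commutative $K$-algebra $K[x_{ij}]_{\det X}$ generated by all entries in $X = (x_{ij})$, and then localized at the determinant $\det X$, where $X$ is a square matrix which is a solution of the equation \dots'' Your existence argument --- build the universal solution algebra $K[X_{ij},1/\det]$, impose the $D$-action via $d\rightharpoonup X=\mathbf{M}(d)X$, pass to a quotient by a maximal $D$-stable ideal, take the total quotient ring --- is exactly this construction, so your outline is consistent with what the paper indicates the \cite{AM} proof does. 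Your uniqueness argument via a simple quotient of $A\otimes_K A'$ and comparison of fundamental matrices up to $GL_n(C)$ is likewise the standard Picard--Vessiot device.

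You have correctly isolated the genuine technical crux: controlling the $D$-invariants of the auxiliary simple $D$-module algebras (your $A$ in existence, your $B$ in uniqueness) so that they coincide with $C$. Your sketch of why $A^D$ (or $B^D$) is algebraic over $C$ is the one place that remains hand-wavy; this is precisely the step whose full justification lives in \cite{AM} rather than in the present paper, and your proposal is honest in flagging it as the delicate point.
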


\section{Preliminaries on $\times_R$-bialgebras}\label{sect:xR}

We continue to work over a fixed field $\Bbbk$. Let $R \neq 0$ be a commutative algebra.
By an $R$-\emph{ring} we mean an algebra given an algebra map from $R$. 
We recall from \cite{S3} the definition of $\times_R$-\emph{bialgebras} with some mild restriction.
Let $\mathcal{A}$ be an $R$-ring. Regard $\mathcal{A}$ as a left (resp., right) $R$-module by
the left (resp., right) multiplication by $R$. Let $\mathcal{A} \otimes_R \mathcal{A}$ denote the tensor product 
of the left $R$-module $\mathcal{A}$ with itself, and let 
\[ \mathcal{A} \times_R \mathcal{A} = \left\{\ \sum_i a_i \otimes b_i \in \mathcal{A} \otimes_R \mathcal{A} \biggm| 
\sum_i a_ix \otimes b_i = \sum_i a_i \otimes b_ix,\ \forall x \in R \right\}\ \]
denote the $R$-centralizers of the two right $R$-module structures on $\mathcal{A} \otimes_R \mathcal{A}$. Then 
$\mathcal{A} \times_R \mathcal{A}$ is naturally an $R$-ring \cite[p.~101]{S3} with respect to the product
\[ \Big(\sum_i a_i \otimes b_i\Big)\Big(\sum_j c_j \otimes d_j\Big) = \sum_{i,j} a_ic_j \otimes b_id_j \]
and the map $x \mapsto x \otimes 1 \, (=1 \otimes x)$ from $R$. 
As our mild restriction we pose the assumption that $\mathcal{A}$ is projective 
as a left $R$-module, which ensures the associativity
\begin{equation}\label{eq:assoc}
(\mathcal{A} \times_R \mathcal{A}) \times_R \mathcal{A} \simeq 
\mathcal{A} \times_R (\mathcal{A} \times_R \mathcal{A})
\end{equation} 
in the sense of \cite[Definition 2.6, p.~94]{S3}. Suppose that the left $R$-module $\mathcal{A}$ has an $R$-coalgebra
structure $\Delta : \mathcal{A} \to \mathcal{A} \otimes_R \mathcal{A}$, $\varepsilon : \mathcal{A} \to R$. 

\begin{definition}[Sweedler \cite{S3}]\label{def:xR}  
We say that $\mathcal{A}$ is a $\times_R$-\emph{bialgebra} if the following
are satisfied.
\begin{itemize}
\item[(a)] $\Delta( \mathcal{A} ) \subset \mathcal{A} \times_R \mathcal{A}$;
\item[(b)] $\Delta : \mathcal{A} \to \mathcal{A} \times_R \mathcal{A}$ is an algebra map;
\item[(c)] $\varepsilon(1) = 1$;
\item[(d)] $\varepsilon(ab) = \varepsilon(a \varepsilon(b))$ for all $a, b \in \mathcal{A}$.
\end{itemize}
\end{definition}

Let $\mathcal{A}$ be a $\times_R$-bialgebra. Given left $\mathcal{A}$-modules $M$, $N$, the action 
$$ a \rightharpoonup (m \otimes n) = \sum (a_1 \rightharpoonup m) \otimes (a_2 \rightharpoonup n),\quad
a \in \mathcal{A}, m \in M, n \in N$$
by $\mathcal{A}$ on the tensor product $M \otimes_R N$ over $R$ is well defined by Condition (a) above. 
By (b), $M \otimes_R N$
is a left $\mathcal{A}$-module with respect to this action. It follows by (c), (d) that $R$ is a left $\mathcal{A}$-module by
$$ a \rightharpoonup x = \varepsilon(ax), \quad a \in \mathcal{A}, x \in R.$$
Notice that the corresponding representation 
\begin{equation}\label{eq:alpha}
\alpha : \mathcal{A} \to \mathrm{End}(R), \ \alpha(a)(x) = a \rightharpoonup x
\end{equation}
coincides with the $\mathcal{I}$ map in \cite{S3}.

\begin{proposition}\label{prop:tensor}
The left $\mathcal{A}$-modules form a tensor category, $\mathcal{A} \text{-} \mathrm{Mod}$,
with respect the tensor product $M \otimes_R N$, the unit object $R$ as above, and
the obvious associativity and unit-constraints. If $\mathcal{A}$ is cocommutative as an $R$-coalgebra,
this tensor category is symmetric with respect to the obvious symmetry.
\end{proposition}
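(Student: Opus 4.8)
The plan is to verify directly that the data just introduced assemble into a (symmetric) tensor category, relying on the defining Conditions (a)--(d) of a $\times_R$-bialgebra exactly where they are needed. First I would check that $M \otimes_R N$ is a well-defined left $\mathcal{A}$-module: the formula $a \rightharpoonup (m \otimes n) = \sum (a_1 \rightharpoonup m) \otimes (a_2 \rightharpoonup n)$ descends to the tensor product over $R$ precisely because $\Delta(a) \in \mathcal{A} \times_R \mathcal{A}$ (Condition (a)), which is the statement $\sum a_1 x \otimes a_2 = \sum a_1 \otimes a_2 x$ for $x \in R$; compatibility with the $\mathcal{A}$-multiplication, i.e. $(ab) \rightharpoonup (m \otimes n) = a \rightharpoonup (b \rightharpoonup (m \otimes n))$, is immediate from $\Delta$ being an algebra map (Condition (b)) together with coassociativity. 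The paper has already noted that $R$ itself is a left $\mathcal{A}$-module via $a \rightharpoonup x = \varepsilon(ax)$, using Conditions (c) and (d); I would just recall this.

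Next I would produce the constraint isomorphisms. The associativity constraint $(M \otimes_R N) \otimes_R P \xrightarrow{\ \sim\ } M \otimes_R (N \otimes_R P)$ is the obvious $R$-module map regrouping the parentheses; that it is $\mathcal{A}$-linear follows from coassociativity of $\Delta$ (more precisely from the associativity \eqref{eq:assoc} of $\times_R$, which is why the projectivity hypothesis on $\mathcal{A}$ was imposed). The unit constraints $R \otimes_R M \xrightarrow{\ \sim\ } M$ and $M \otimes_R R \xrightarrow{\ \sim\ } M$ are the canonical $R$-module isomorphisms; their $\mathcal{A}$-linearity comes down to the counit axioms for the $R$-coalgebra $\mathcal{A}$, namely $\sum \varepsilon(a_1) a_2 = a = \sum a_1 \varepsilon(a_2)$ in the appropriate sense, combined with the module structure on $R$. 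I would then remark that the pentagon and triangle axioms hold because all the constraints are the ``obvious'' ones and these coherence diagrams already commute at the level of underlying $R$-modules; no new computation is needed beyond the classical bialgebra case.

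For the symmetric monoidal statement, assume $\mathcal{A}$ is cocommutative, i.e. $\sum a_1 \otimes a_2 = \sum a_2 \otimes a_1$ inside $\mathcal{A} \otimes_R \mathcal{A}$ (equivalently inside $\mathcal{A} \times_R \mathcal{A}$). The braiding is the obvious flip $c_{M,N} : M \otimes_R N \to N \otimes_R M$, $m \otimes n \mapsto n \otimes m$, which is a well-defined $R$-module map; its $\mathcal{A}$-linearity is exactly cocommutativity applied to $a \rightharpoonup (m \otimes n)$. The hexagon axioms and the identity $c_{N,M} \circ c_{M,N} = \mathrm{id}$ are formal once the flip is known to be a morphism, since the flip is an involution on underlying $R$-modules.

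The main obstacle, and the only place where the restriction to projective $\mathcal{A}$ genuinely matters, is the well-definedness and coherence of the associativity constraint: one must know that $(M \otimes_R N)\otimes_R P \cong M \otimes_R(N \otimes_R P)$ not merely as $R$-modules but compatibly with the three-fold coproduct, which is precisely the content of \eqref{eq:assoc} and \cite[Definition 2.6]{S3}. Everything else is a routine transcription of the classical proof that modules over a bialgebra form a (symmetric) tensor category, with ``$\otimes_\Bbbk$'' replaced by ``$\otimes_R$'' and the unit object $\Bbbk$ replaced by $R$.
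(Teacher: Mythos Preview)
Your proposal is correct and follows essentially the same approach as the paper. The paper's own proof is a single sentence pointing to the one nontrivial step---that the associativity \eqref{eq:assoc} makes the obvious $R$-linear isomorphism $(M \otimes_R N)\otimes_R P \simeq M \otimes_R (N\otimes_R P)$ into an $\mathcal{A}$-linear one---which is precisely the point you single out as the main obstacle; everything else you spell out in detail the paper leaves implicit as routine.
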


\begin{proof} 
This follows if one notices that the associativity \eqref{eq:assoc} ensures
that the obvious $R$-linear isomorphism $(M \otimes_R N) \otimes_R P \simeq M \otimes_R (N \otimes_R P) $
is $\mathcal{A}$-linear.
\end{proof}

To give examples of $\times_R$-bialgebras, let $H$ be a bialgebra (over $\Bbbk$), and let 
$R$ be an $H$-module algebra; this last does and will mean a left $H$-module algebra 
which is non-zero and commutative, as before.  
Then we have the smash-product algebra $R \# H$  
\cite[p.~153]{S2}. Regard this $R \# H$ as an $R$-ring by the natural embedding 
$R = R \otimes \Bbbk \to R \# H$, and as an $R$-coalgebra by base extension of 
the $\Bbbk$-coalgebra $H$ along $\Bbbk \to R$.

\begin{lemma}\label{lem:main-example}
Let $I \subset R \# H$ be an ideal and coideal, and set $\mathcal{A} = R \# H / \, I$; this is an $R$-ring and
$R$-coalgebra. If $\mathcal{A}$ is projective as a left $R$-module and is cocommutative as an $R$-coalgebra, 
then it is a $\times_R$-bialgebra.
\end{lemma}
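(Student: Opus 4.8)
The plan is to verify the four conditions (a)--(d) of Definition~\ref{def:xR} for $\mathcal{A} = R\#H/I$, working from the well-known structure of the smash product and using the hypothesis that $I$ is both an ideal and a coideal so that everything descends to the quotient. First I would recall that on $R\#H$ itself the $\Bbbk$-coalgebra structure of $H$, extended along $\Bbbk\to R$, gives an $R$-coalgebra structure $\Delta(x\#h)=\sum(x\#h_1)\otimes_R(1\#h_2)$ and $\varepsilon(x\#h)=x\,\varepsilon_H(h)$, where I identify $x\#h$ with $(x\#1)(1\#h)$. Since $I$ is a coideal, $\Delta(I)\subset I\otimes_R\mathcal{A}+\mathcal{A}\otimes_R I$ and $\varepsilon(I)=0$, so $\Delta$ and $\varepsilon$ descend to $\mathcal{A}$; the left $R$-module projectivity of $\mathcal{A}$ is needed precisely to make $\mathcal{A}\otimes_R\mathcal{A}$ behave well and to invoke \eqref{eq:assoc}, and it is assumed, so I do not have to produce it.

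Next I would check (a), that $\Delta(\mathcal{A})\subset\mathcal{A}\times_R\mathcal{A}$. For a generator $\overline{x\#h}$ one computes, for $y\in R$,
\[
\sum (\overline{x\#h_1}\cdot y)\otimes_R\overline{1\#h_2}
=\sum \overline{x\#h_1}\otimes_R(y\cdot\overline{1\#h_2}),
\]
and the right $R$-action on the smash product pulls $y$ through $h$ via the module-algebra action, so both sides equal $\sum\overline{x(h_1\rightharpoonup y)\#h_2}\otimes_R\overline{1\#h_3}$ after the usual reindexing; hence the centralizer condition holds. This is really just the standard fact that the image of the coproduct of $R\#H$ lands in $R\#H\times_R R\#H$, pushed to the quotient. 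Condition (b), that $\Delta$ is an algebra map into $\mathcal{A}\times_R\mathcal{A}$, follows because $\Delta$ on $R\#H$ is multiplicative (the smash product coproduct is an algebra map to the appropriate $\times_R$-object, this being essentially Sweedler's original example), and multiplicativity is preserved under passing to the quotient by the ideal $I$. Conditions (c) and (d) are immediate from $\varepsilon(\overline{x\#h})=x\,\varepsilon_H(h)$: $\varepsilon(\overline{1\#1})=1$, and $\varepsilon(ab)=\varepsilon(a\,\varepsilon(b))$ reduces, on generators $a=\overline{x\#h}$, $b=\overline{y\#g}$, to the identity $xy\,\varepsilon_H(h)\varepsilon_H(g)=x\cdot(y\varepsilon_H(g))\cdot\varepsilon_H(h)$ in the commutative ring $R$, using that $h\rightharpoonup$ acts trivially after applying $\varepsilon_H$.

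The main obstacle, such as it is, is bookkeeping rather than a genuine difficulty: one must be careful that all the manipulations with $\otimes_R$ and with the right $R$-module structures are legitimate on the quotient $\mathcal{A}$, which is exactly where the hypotheses ``$I$ is an ideal and a coideal'' and ``$\mathcal{A}$ is left $R$-projective'' are used; the cocommutativity hypothesis is what one needs only in order to call the result a $\times_R$-bialgebra in the sense here (and it is automatically consistent with (a)--(d)). So the proof is essentially: observe that $R\#H$ with its natural $R$-ring and $R$-coalgebra structure is a $\times_R$-bialgebra whenever it is left $R$-projective, and then note that all four defining conditions are inherited by the quotient $R\#H/I$ by an ideal-coideal, granted left $R$-projectivity and cocommutativity of the quotient.
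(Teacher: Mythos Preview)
There is a genuine gap in your verification of condition (a). You assert that for $R\#H$ itself ``the image of the coproduct lands in $R\#H\times_R R\#H$,'' and that both sides of the centralizer identity equal $\sum\overline{x(h_1\rightharpoonup y)\#h_2}\otimes_R\overline{1\#h_3}$ ``after the usual reindexing.'' This is false when $H$ is not cocommutative. Computing honestly with the right $R$-action $(x\#h)\cdot y=\sum x(h_1\rightharpoonup y)\#h_2$, the left side of the centralizer condition gives
\[
\sum \overline{x(h_1\rightharpoonup y)\#h_2}\otimes_R\overline{1\#h_3},
\]
while the right side, after moving the scalar $(h_2\rightharpoonup y)\in R$ across $\otimes_R$, gives
\[
\sum \overline{x(h_2\rightharpoonup y)\#h_1}\otimes_R\overline{1\#h_3}.
\]
These differ by the swap $h_1\leftrightarrow h_2$, which is \emph{not} coassociativity reindexing; it is cocommutativity. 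Indeed $R\#H$ is always $R$-free, yet Corollary~\ref{cor:smash-product} only asserts it is a $\times_R$-bialgebra when $H$ is cocommutative. So your final summary, that one first shows $R\#H$ is a $\times_R$-bialgebra and then descends, cannot work in the generality of the lemma.

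The paper's proof uses the cocommutativity of the \emph{quotient} $\mathcal{A}$, not of $H$, and uses it precisely to establish (a). From cocommutativity of $\mathcal{A}$ one extracts, via the well-defined $R$-linear map $\mathcal{A}\otimes_R\mathcal{A}\to\mathcal{A}$, $a\otimes b\mapsto \varepsilon(ax)\,b$, the relation
\[
(1\#h)x=\sum(h_1\rightharpoonup x)\#h_2\ \equiv\ \sum(h_2\rightharpoonup x)\#h_1\pmod{I},
\]
and this congruence is exactly what bridges the two displayed expressions above. Your remark that ``the cocommutativity hypothesis is what one needs only in order to call the result a $\times_R$-bialgebra'' is therefore backwards: cocommutativity is not part of Definition~\ref{def:xR}, but it is the substantive ingredient needed for (a). Conditions (b)--(d) are, as you say, routine.
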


\begin{proof}
The cocommutativity assumption implies that if $h \in H$, $x \in R$, then
\begin{equation*} 
(1\# h)x = \sum\varepsilon(h_1x)\# h_2 \equiv \sum\varepsilon(h_2x)\# h_1 
= \sum (h_2 \rightharpoonup x) \# h_1
\end{equation*}
modulo $I$ in $R\# H$. It follows that  
\begin{align*} 
&\sum (1 \# h_1)x \otimes (1 \# h_2) \equiv \sum ((h_2\rightharpoonup x) \# h_1) \otimes (1 \# h_3)\\ 
&= \sum (1 \# h_1) \otimes ((h_2\rightharpoonup x) \# h_3) 
= \sum (1 \# h_1) \otimes (1 \# h_2)x
\end{align*}
modulo $I \otimes (R \# H) + (R \# H)\otimes I$ in $(R \# H)\otimes (R \# H)$. 
This ensures Condition (a) of Definition \ref{def:xR}. The
remaining conditions are easily verified.
\end{proof}

\begin{corollary}[\cite{S3}, p.~117]\label{cor:smash-product}
 If the bialgebra $H$ is cocommutative, $R \# H$ is a $\times_R$-bialgebra.
\end{corollary}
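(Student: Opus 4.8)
The plan is to derive Corollary~\ref{cor:smash-product} from Lemma~\ref{lem:main-example} by taking the ideal-and-coideal $I$ to be the zero ideal, so that $\mathcal{A} = R \# H$ itself. The two hypotheses of the lemma must then be checked directly for $\mathcal{A} = R\# H$. First I would verify that $R \# H$ is projective, in fact free, as a left $R$-module: by the very construction of the smash-product algebra, $R \# H = R \otimes H$ as a left $R$-module (the left $R$-action being multiplication in the first tensor factor), so a $\Bbbk$-basis of $H$ gives a free $R$-basis of $R\# H$. Second, I would observe that the $R$-coalgebra structure on $R\# H$ is, by definition, the base extension along $\Bbbk \to R$ of the $\Bbbk$-coalgebra $H$; hence if $H$ is cocommutative over $\Bbbk$, then $R\# H$ is cocommutative as an $R$-coalgebra. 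Both requirements of Lemma~\ref{lem:main-example} are therefore met, and the lemma immediately yields that $R\# H$ is a $\times_R$-bialgebra.

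There is essentially no obstacle here, since the statement is a direct specialization; the only point that deserves a word is the compatibility of the two $R$-module structures entering the definition of $R\# H \otimes_R (R\# H)$ — but this is already subsumed in the proof of Lemma~\ref{lem:main-example}, where cocommutativity of $H$ over $\Bbbk$ is exactly what drives the verification of Condition~(a) of Definition~\ref{def:xR}. So the proof is just the sentence: ``Apply Lemma~\ref{lem:main-example} with $I = 0$, noting that $R\# H$ is free as a left $R$-module and, being the base extension of the cocommutative $\Bbbk$-coalgebra $H$, is cocommutative as an $R$-coalgebra.''

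\begin{proof}
Take $I = 0$ in Lemma~\ref{lem:main-example}, so that $\mathcal{A} = R \# H$.
As a left $R$-module, $R \# H$ equals $R \otimes H$ with $R$ acting by multiplication on
the first factor; hence it is free, in particular projective. Its $R$-coalgebra structure is
the base extension along $\Bbbk \to R$ of the $\Bbbk$-coalgebra $H$, so cocommutativity of
$H$ implies cocommutativity of $R \# H$ as an $R$-coalgebra. Lemma~\ref{lem:main-example}
now gives the assertion.
\end{proof}
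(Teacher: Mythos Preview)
Your proof is correct and is exactly the intended argument: the paper states the corollary immediately after Lemma~\ref{lem:main-example} without further proof, and the specialization $I=0$ together with the observations that $R\#H$ is $R$-free and that the $R$-coalgebra structure is the base extension of the cocommutative $\Bbbk$-coalgebra $H$ is precisely what yields the result.
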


The following example will be used in the proofs of Proposition \ref{prop:injective_representation}
and of Lemma \ref{lem:skew-der}. Suppose that $R\, (\ne 0)$ is a commutative algebra. 

\begin{example}\label{ex:End}
Regard $R^e := R \otimes_{\Bbbk} R$ as a (commutative) $R$-algebra
by $x \mapsto x \otimes 1, R \to R \otimes R$. The $R$-linear dual of $R^e$ is naturally
identified with the $R$-module $\mathrm{End}(R)$ consisting of all $\Bbbk$-linear endomorphisms on $R$.
Assume that $R$ is a field. Then we have the dual (cocommutative) $R$-coalgebra 
$(R^e)^{\circ}$ of $R^e$, which is now supposed to be included in $\mathrm{End}(R)$; 
see \cite[Section 6.0]{S2}. 
Note that $\mathrm{End}(R)$ is an algebra, and is, moreover, 
an $R$-ring, given the algebra map from $R$ which sends each $x \in R$
to the multiplication by $x$. It is known that the $R$-coalgebra $(R^e)^{\circ}$ is an $R$-subring
of $\mathrm{End}(R)$, and is indeed a cocommutative $\times_R$-bialgebra.
\end{example}

\section{Iterative $q$-difference rings and associated $\times_R$-bialgebras}\label{sect:iter-dif-ring}

\subsection{}\label{subsec:iterative_difference_ring}
We let $\mathbb{N} = \{ 0, 1, 2,... \}$ be the set of non-negative integers.

Throughout in this section, $C$ denotes a field, and $C(t)$ denotes the field of rational functions
over $C$. Choose arbitrarily an element $q \in C \smallsetminus \{ 0, 1 \}$. 
Let $\Bbbk_0$ denote the prime field included in $C$, and set $\Bbbk = \Bbbk_0(q)$, the subfield
of $C$ generated by $q$ over $\Bbbk_0$.
Following \cite{H} we denote the $q$-integer, the $q$-factorial and the $q$-binomial, 
respectively by
$$ [k]_q = \frac{q^k - 1}{q - 1}, \ [0]_q = 1,$$
$$ [k]_q ! = [k]_q [k - 1]_q ... [1]_q, \ [0]_q ! = 1,$$
$$ \binom{r}{s}_q = \frac{[r]_q !}{[s]_q ![r-s]_q !},$$
where $k, r, s \in \mathbb{N}$ with $k > 0,\ r \geq s$.

In what follows we fix a commutative ring $R$ including $C(t)$, and such an automorphism
$\sigma_q : R \overset{\simeq}{\longrightarrow} R$ that extends the $q$-difference operator 
$f(t) \mapsto f(qt)$ on $C(t)$. 

\begin{definition}[Hardouin~\cite{H}, Definition~2.4]\label{def:iter-dif-op}
An \emph{iterative} $q$-\emph{difference operator} on $R$ is a sequence
$\delta^*_R = (\delta^{(k)}_R)_{k \in \mathbb{N}}$ of maps $\delta^{(k)}_R : R \to R$
such that
\begin{itemize}
\item[(1)] $\delta^{(0)}_R = \mathrm{id}_R$, the identity map on $R$,
\item[(2)] $\delta^{(1)}_R = \dfrac{1}{(q-1)t}(\sigma_q - \mathrm{id}_R)$,
\item[(3)] $\delta^{(k)}_R( x + y ) = \delta^{(k)}_R(x) + \delta^{(k)}_R(y),\ x, y \in R$,
\item[(4)] $\delta^{(k)}_R( x y ) = \sum_{ i + j = k } \sigma_q^i \circ \delta^{(j)}_R(x) \, \delta^{(i)}_R( y ),\ x, y \in R$,
\item[(5)] $\delta^{(i)}_R \circ \delta^{(j)}_R = \dbinom{i+j}{i}_q \delta^{(i+j)}_R$. 
\end{itemize} 
\noindent
An \emph{iterative} $q$-\emph{difference ring} is a commutative ring $R \supset C(t)$ given $\sigma_q, \delta^*_R$ such as above. 
\end{definition}

Assume that $q$ is not a root of unity. Then, $[k]_q \neq 0$ for all $k$. If $\delta^*_R = (\delta^{(k)}_R)_{k \in \mathbb{N}}$
is an iterative $q$-difference operator on $R$, Conditions (1), (2) and (5) above require
\begin{equation}\label{eq:kth-power}
\delta^{(1)}_R = \frac{1}{(q-1)t}(\sigma_q - \mathrm{id}_R), \quad 
\delta^{(k)}_R = \frac{1}{[k]_q!}(\delta^{(1)}_R)^k, \ k \in \mathbb{N}.
\end{equation}
Conversely, if we define $\delta^{(k)}_R$ by \eqref{eq:kth-power}, then 
$\delta^*_R = (\delta^{(k)}_R)_{k \in \mathbb{N}}$ forms
an iterative $q$-difference operator on $R$; especially, Condition (4) is satisfied since one sees 
$\delta^{(1)}_R \circ \sigma_q = q \, \sigma_q \circ \delta^{(1)}_R$. Therefore under the assumption, 
an iterative $q$-difference ring is nothing but such a pair
$(R, \sigma_q)$ as above. In this case the results obtained by Hardouin \cite{H} are 
specialized from the difference Picard-Vessiot Theory as developed in \cite{PS1}. 

In what follows we assume that $q$ is a root of unity, and let $N\, (>1)$ denote its order, i.e., 
the least positive integer such that $q^N = 1$.  

\begin{lemma}\label{lem:relations}
Let $\delta^*_R = (\delta^{(k)}_R)_{k \in \mathbb{N}}$ be an iterative $q$-difference operator on $R$.
\begin{itemize}
\item[(a)] $\sigma_q^N = \mathrm{id}_R$.
\item[(b)] Each $\delta^{(k)}_R$ is $\Bbbk$-linear.
\item[(c)] $\delta^{(1)}_R(t) = 1$. 
\item[(d)] $\delta^{(k)}_R(t) = 0, \ 1 < k \in \mathbb{N}$. 
\item[(e)] \cite[Lemma 2.6]{H} $\delta^{(k)}_R \circ \sigma_q = q^k \, \sigma_q \circ \delta^{(k)}_R, \ k \in \mathbb{N}$.
\end{itemize}
\end{lemma}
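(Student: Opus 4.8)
The plan is to derive all five statements directly from the defining conditions (1)--(5) of Definition \ref{def:iter-dif-op}, working in the order (c), (d), (a), (e), (b), since the later parts lean on the earlier ones. For part (c), apply Condition (2): since $\sigma_q(t) = qt$, we get $\delta^{(1)}_R(t) = \frac{1}{(q-1)t}(qt - t) = \frac{(q-1)t}{(q-1)t} = 1$. For part (d), I would argue by induction on $k$. Condition (5) with $i = 1$, $j = k-1$ gives $\delta^{(1)}_R \circ \delta^{(k-1)}_R = \binom{k}{1}_q \delta^{(k)}_R = [k]_q \, \delta^{(k)}_R$. The subtlety is that $[k]_q$ may vanish when $q$ is a root of unity of order $N$, so this cannot simply be inverted. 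Instead I would use Condition (4) applied to $t \cdot t$: expanding $\delta^{(k)}_R(t^2)$ two ways and feeding in the already-known values $\delta^{(0)}_R(t) = t$, $\delta^{(1)}_R(t) = 1$, together with $\sigma_q^i(t) = q^i t$, one extracts a relation forcing $\delta^{(k)}_R(t) = 0$ for $k > 1$; alternatively, one observes $\delta^{(k)}_R(t) \in C(t)$ is $\Bbbk_0(q)$-determined and use that $C(t)$ itself, with its standard iterative $q$-difference structure coming from \eqref{eq:kth-power}-type formulas on the polynomial generators, has $\delta^{(k)}(t) = 0$ for $k>1$ (which is part of how Hardouin sets things up). I expect the cleanest route is the first: a short direct computation from Condition (4) on $t^n$ or $t^2$.

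For part (a), $\sigma_q^N = \mathrm{id}_R$: by Condition (2), $\sigma_q = \mathrm{id}_R + (q-1)t\,\delta^{(1)}_R$, so it suffices to control powers. Actually the swift argument uses part (e): once we know $\delta^{(k)}_R \circ \sigma_q = q^k \sigma_q \circ \delta^{(k)}_R$, and we know $\sigma_q$ fixes $C$ and sends $t \mapsto qt$, restricting $\sigma_q$ to $C(t)$ already gives $\sigma_q^N|_{C(t)} = \mathrm{id}$ since $q^N = 1$; to propagate this to all of $R$ I would note that $R$ is generated as a ring in a way compatible with the $\delta^{(k)}_R$, or more robustly, observe that $\sigma_q = \delta^{(1)}_R \cdot (q-1)t + \mathrm{id}$ is an algebra map whose iterate $\sigma_q^N$ is again an algebra map that is the identity on $C(t)$, hence — using that $R$ is a $C(t)$-algebra and the $\sigma_q$-action is the relevant structure — equals $\mathrm{id}_R$. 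Here the honest obstacle is that the lemma must be proved from the axioms alone, so I would instead derive (a) \emph{before} (e) by a purely combinatorial route: from the multiplicativity Condition (4) and part (c)/(d), show that for any $x \in R$ one has $\sigma_q(x) = x + (q-1)t\,\delta^{(1)}_R(x)$, then compute $\sigma_q^N(x)$ by induction, where each application of $\sigma_q$ introduces a factor governed by the $q$-binomial identity $\sum_{i} \binom{N}{i}_q (\text{stuff}) = 0$ valid because $q^N = 1$ makes $[N]_q! $-related coefficients collapse; the key identity is that $(x+1)(x+q)\cdots(x+q^{N-1})$-type products degenerate at a primitive $N$th root, which is exactly what kills the off-diagonal terms.

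For part (e), I would mimic Hardouin's Lemma 2.6: write $\sigma_q = \mathrm{id}_R + (q-1)t\,\delta^{(1)}_R$ from Condition (2) and compute $\delta^{(k)}_R(\sigma_q(x)) = \delta^{(k)}_R(x) + (q-1)\,\delta^{(k)}_R(t\,\delta^{(1)}_R(x))$; expand the last term by the Leibniz rule Condition (4), using parts (c), (d) to kill all but two summands ($i+j = k$ with the $t$-factor contributing only through $j \in \{0,1\}$), and then simplify using Condition (5) to relate $\delta^{(i)}_R \circ \delta^{(1)}_R$ back to $\delta^{(i+1)}_R$. Collecting terms and using $[k]_q q + 1 = q^{k}$ (equivalently $(q-1)[k]_q = q^k - 1$) yields exactly $\delta^{(k)}_R \circ \sigma_q = q^k \sigma_q \circ \delta^{(k)}_R$; I would remark this is just \cite[Lemma 2.6]{H}. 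Finally, part (b), $\Bbbk$-linearity of each $\delta^{(k)}_R$: additivity is Condition (3), so it remains to show $\delta^{(k)}_R(cx) = c\,\delta^{(k)}_R(x)$ for $c \in \Bbbk = \Bbbk_0(q)$; by Condition (4) this reduces to $\delta^{(j)}_R(c) = 0$ for $j \ge 1$ and all $c \in \Bbbk$, and since $\Bbbk$ is generated by $q \in C$ over the prime field and $\delta^{(1)}_R = \frac{1}{(q-1)t}(\sigma_q - \mathrm{id})$ vanishes on $C \subset C(t)$ (as $\sigma_q$ fixes $C$), the higher $\delta^{(j)}_R$ vanish on $C \supset \Bbbk$ by the formula $\delta^{(j)}_R = \frac{1}{[j]_q!}(\delta^{(1)}_R)^j$ where applicable, or more carefully by downward induction using Condition (5) together with injectivity of the relevant operators on the constants. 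The main obstacle throughout is handling the root-of-unity case where $[k]_q!$ can vanish, so wherever Hardouin would simply divide, I must instead route the argument through the multiplicative axiom (4) evaluated on $t$ and its powers, where parts (c) and (d) make the $q$-binomial coefficients that survive exactly the invertible ones.
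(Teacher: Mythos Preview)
Your sketch has a genuine gap in part (b), and a dependency problem between (b) and (d).

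For (b) you correctly reduce to showing $\delta^{(k)}_R(q)=0$ for all $k\ge 1$, and you observe this for $k=1$ via Condition~(2). But your next step --- writing $\delta^{(k)}_R=\frac{1}{[k]_q!}(\delta^{(1)}_R)^k$ --- only makes sense for $k<N$, since $[N]_q=0$. For $k\ge N$ you offer ``downward induction using Condition~(5) together with injectivity of the relevant operators on the constants,'' but Condition~(5) gives $\delta^{(i)}\circ\delta^{(j)}=\binom{i+j}{i}_q\delta^{(i+j)}$ with possibly vanishing coefficient, so it cannot propagate vanishing upward past multiples of $N$, and nothing is injective on constants here. (Also, the $\delta^{(k)}_R$ need \emph{not} vanish on all of $C$; that is the extra hypothesis~\eqref{constant} in Remark~\ref{rem:main_example}. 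You only need vanishing on $\Bbbk=\Bbbk_0(q)$.) The paper's argument is a genuinely different idea you are missing: compute $\delta^{(k)}_R\circ\delta^{(1)}_R\circ\delta^{(1)}_R$ by grouping two ways via (5); comparing the results and expanding with Leibniz yields $\delta^{(k)}_R(q)\cdot\delta^{(2)}_R=0$ as operators, and since $\delta^{(2)}_R(t^2)=1$ one concludes $\delta^{(k)}_R(q)=0$ for all $k$ at once.

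This gap cascades: your order puts (d) before (b), but the paper's proof of (d) uses $\Bbbk$-linearity of $\delta^{(k)}_R$ at the very first step, to write $\delta^{(k)}_R\big((q+1)t\big)=(q+1)\delta^{(k)}_R(t)$. Your own sketch for (d) (``expand $\delta^{(k)}_R(t^2)$ and extract a relation'') is too vague to see whether you avoid this, and your alternative via $C(t)$ is circular, since stability of $C(t)$ under $\delta^{(k)}_R$ is Corollary~\ref{cor:stable}, proved \emph{from} this lemma.

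For (a) your combinatorial expansion of $\sigma_q^N=(1+(q-1)t\,\delta^{(1)}_R)^N$ may be salvageable but is not the paper's route and you do not control the non-commutativity of $t$ and $\delta^{(1)}_R$. The paper's argument is short: Condition~(5) gives $(\delta^{(1)}_R)^N=[N]_q!\,\delta^{(N)}_R=0$, hence $\big(\tfrac{1}{t}(\sigma_q-\mathrm{id}_R)\big)^N=0$; then the commutation identity $\big(\tfrac{1}{q^i}\sigma_q-\mathrm{id}_R\big)\tfrac{1}{t}=\tfrac{1}{t}\big(\tfrac{1}{q^{i+1}}\sigma_q-\mathrm{id}_R\big)$ lets you rewrite this as $\tfrac{1}{t^N}\prod_{i=0}^{N-1}\big(\tfrac{1}{q^i}\sigma_q-\mathrm{id}_R\big)$, which equals $\tfrac{1}{t^N}(\sigma_q^N-\mathrm{id}_R)$ up to a unit. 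Parts (c) and (e) in your sketch are fine.
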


\begin{proof}
(a) It follows from Conditions (2), (5) above that $(\delta^{(1)}_R)^N = 0$, and so
$\big(\frac{1}{t}(\sigma_q - \mathrm{id}_R)\big)^N = 0$. 
This implies the desired result, since we see by using 
$\big(\frac{1}{q^i}\sigma_q -\mathrm{id}_R\big)\frac{1}{t} 
= \frac{1}{t}\big(\frac{1}{q^{i+1}}\sigma_q-\mathrm{id}_R\big)$ that 
\[ \Big(\frac{1}{t}(\sigma_q - \mathrm{id}_R)\Big)^N = \frac{1}{t^N} \prod_{i=0}^{N-1}\Big(\frac{1}{q^i}\sigma_q - \mathrm{id}_R\Big) = \frac{1}{t^N}(\sigma^N_q-\mathrm{id}_R).  \]

(b) By using Condition (4), one sees by induction on $k$ 
\begin{equation}\label{delta-at-1}
\delta_R^{(k)}(1) = 0, \quad 0 < k \in \mathbb{N}. 
\end{equation}
By (3), this proves the desired result if $N =2$ or $q=-1$. So, we may suppose $N > 2$. 
Compute $\delta_R^{(k)} \circ \delta_R^{(1)} \circ \delta_R^{(1)}$ in two ways, using (5).
Then one sees by using \eqref{delta-at-1} that $\delta_R^{(k)}(q) \, \delta_R^{(2)} = 0$. 
Since $\delta^{(2)}_R(t^2)
= \dfrac{1}{[2]_q}\delta^{(1)}_R\circ \delta^{(1)}_R(t^2) = 1$, we have 
\begin{equation*}
\delta_R^{(k)}(q) = 0, \quad 0 < k \in \mathbb{N}. 
\end{equation*}
By (3) and (4), this together with \eqref{delta-at-1} implies the desired result. 

(c) This follows immediately from (2). 

(d) By Condition (5), the operators $\delta^{(k)}_R$, $k\in \mathbb{N}$, commute with each other. 

Let $k > 0$. One sees from \eqref{delta-at-1} and (c) above that
\begin{equation}\label{vanish_at_t} 
\delta^{(1)}_R \circ \delta^{(k)}_R(t) = \delta^{(k)}_R \circ \delta^{(1)}_R(t) = 0,
\end{equation}
which together with (2) proves
\begin{equation}\label{invariant_by_sigma}
\sigma_q\circ \delta^{(k)}_R(t) = \delta^{(k)}_R(t).
\end{equation}
By (2), we have  
\begin{equation}\label{two_condition_imply}
(q+1)\delta^{(k)}_R(t)= \delta^{(k)}_R \circ \delta^{(1)}_R(t^2) 
= \delta^{(1)}_R \circ\delta^{(k)}_R  (t^2). 
\end{equation}

We prove the desired equation by induction on $k > 1$. 
Suppose $k = 2$. By using \eqref{vanish_at_t}, \eqref{invariant_by_sigma} and Condition (4), 
one deduces from \eqref{two_condition_imply}
\[ (q+1)\delta^{(2)}_R(t)= \delta^{(1)}_R(2t\delta^{(2)}_R(t)+1) = 2\delta^{(2)}_R(t), \]
which implies the desired equation for $k = 2$, since $q \ne 1$. If $k > 2$, one deduces from 
\eqref{two_condition_imply} and the desired equations for smaller $k$ 
\[ (q+1)\delta^{(k)}_R(t)= \delta^{(1)}_R(2t\delta^{(k)}_R(t)) = 2\delta^{(k)}_R(t), \] 
which implies the desired equation for $k$. 
\end{proof}

It seems that (d) above is implicitly used in the proof of \cite[Lemma 2.6]{H}, to prove 
Eq.~(5) on Page 106, line 4; the lemma is reproduced above as (e). 

\begin{corollary}\label{cor:stable}
Let $\delta^*_R = (\delta^{(k)}_R)_{k \in \mathbb{N}}$ be an iterative $q$-difference operator on $R$.
Then each $\delta^{(k)}_R$ as well as $\sigma_q$ stabilizes $\Bbbk(t)$. The restricted operators 
$\delta^{(k)}_R|_{\Bbbk(t)}$, $k \in \mathbb{N}$, give the unique iterative $q$-difference operator on $\Bbbk(t)$,
as defined by Definition \ref{def:iter-dif-op} when $C = \Bbbk$, $R = \Bbbk(t)$, and they extend
the $\Bbbk$-linear operators on $\Bbbk[t]$ determined by
\begin{equation}\label{def_of_delta}
\delta^{(k)}_R(t^n) = 
\begin{cases}
\dbinom{n}{k}_q t^{n-k}, & n \ge k,\\ 
\ \, 0, & 0 \le n < k. 
\end{cases}
\end{equation}
\end{corollary}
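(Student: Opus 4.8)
The plan is to verify directly that the formula \eqref{def_of_delta} defines $\Bbbk$-linear operators on $\Bbbk[t]$ satisfying Conditions (1)--(5) of Definition \ref{def:iter-dif-op}, then to extend them to $\Bbbk(t)$ and invoke the uniqueness already encoded in \eqref{eq:kth-power} would give --- except that here $q$ is a root of unity, so uniqueness must be argued afresh from the multiplicativity condition (4). First I would check the stability claim: by Lemma \ref{lem:relations}(b) each $\delta^{(k)}_R$ is $\Bbbk$-linear, and by Lemma \ref{lem:relations}(c),(d) we know $\delta^{(k)}_R(t)$ for all $k$; combining these with Condition (4) and an induction on $n$ shows that $\delta^{(k)}_R(t^n)$ lies in $\Bbbk[t]$ and is given by the stated $q$-binomial formula, so $\Bbbk[t]$ is stable; stability of $\Bbbk(t)$ then follows because $\sigma_q$ and the $\delta^{(k)}_R$ can be pushed through quotients using Condition (4) (the quotient rule is forced once one knows how the operators act on products, exactly as in the classical Leibniz-type argument).

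Next I would treat the existence half of the uniqueness statement, i.e.\ that the operators defined on $\Bbbk[t]$ by \eqref{def_of_delta} genuinely satisfy (1)--(5). Conditions (1) and (3) are immediate from the formula and $\Bbbk$-linearity. Condition (2) is the identity $\binom{n}{1}_q = [n]_q$, together with $\tfrac{1}{(q-1)t}(\sigma_q-\mathrm{id})(t^n) = \tfrac{q^n-1}{q-1}t^{n-1} = [n]_q t^{n-1}$. Condition (5) reduces to the $q$-Vandermonde-type identity $\binom{n}{i}_q\binom{n-i}{j}_q = \binom{i+j}{i}_q\binom{n}{i+j}_q$, a standard computation with $q$-factorials. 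The only genuinely substantive point is Condition (4), which on monomials $t^m$, $t^n$ becomes the $q$-binomial theorem in the form
\[
\binom{m+n}{k}_q = \sum_{i+j=k} q^{\,i(m-j)}\binom{m}{j}_q\binom{n}{i}_q,
\]
where the $q^{i(m-j)}$ factor records the effect of $\sigma_q^i$ on $t^{m-j}$. This is where I expect the main obstacle: one must get the power of $q$ exactly right, matching $\sigma_q^i\circ\delta^{(j)}_R(t^m) = q^{i(m-j)}\binom{m}{j}_q t^{m-j}$ against the left side, and then recognize the resulting sum as the known $q$-Chu--Vandermonde identity (valid as a polynomial identity in $q$, hence also when $q$ is a root of unity); alternatively one can deduce it from the comultiplication on the $q$-divided-power coalgebra, which is perhaps cleaner.

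Finally, for uniqueness: given any iterative $q$-difference operator on $\Bbbk(t)$ in the sense of Definition \ref{def:iter-dif-op}, Lemma \ref{lem:relations}(c),(d) pin down its values at $t$, and Condition (4) together with $\Bbbk$-linearity then propagates these to all of $\Bbbk[t]$ by induction on degree, forcing \eqref{def_of_delta}; the quotient rule forced by (4) extends this to $\Bbbk(t)$. Since $C$ and $R$ play no role beyond providing the ambient operators, restricting to $\Bbbk(t)$ and applying the above gives that $\delta^{(k)}_R|_{\Bbbk(t)}$ is this unique operator. I would organize the write-up so that the $q$-factorial identities needed for (4) and (5) are stated once and used twice, keeping the monomial computations to a minimum.
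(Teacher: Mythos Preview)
Your proposal is correct and follows essentially the same route as the paper's proof: stability from Lemma~\ref{lem:relations}(b)--(d), uniqueness on $\Bbbk[t]$ by induction using Condition~(4), and extension to $\Bbbk(t)$ via the forced quotient rule. The one point of difference is that for the existence half---that the formula \eqref{def_of_delta} really does define an iterative $q$-difference operator on $\Bbbk(t)$---the paper simply cites \cite[Proposition~2.9]{H}, whereas you unpack this into the verification of Conditions~(1)--(5) via $q$-factorial identities, culminating in the $q$-Chu--Vandermonde identity for Condition~(4). Your version is more self-contained; the paper's is shorter because the computation already lives in Hardouin's work. Either way the argument is the same in substance.
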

\begin{proof}
The first assertion on stability follows 
from Lemma \ref{lem:relations} (b)--(d). By inductions first on $n$ and then on $k$, 
we see that Condition (4) uniquely determines the values $\delta^{(k)}_R(t^n)$ in $\Bbbk[t]$. By the same condition
the extension of the operators to $\Bbbk(t)$ is unique. As is essentially shown by \cite[Proposition 2.9]{H}, 
the $\Bbbk$-linear operators on $\Bbbk[t]$ defined by \eqref{def_of_delta} uniquely extend to 
an iterative $q$-difference operator on $\Bbbk(t)$. This proves the second assertion. 
\end{proof}

\begin{remark}\label{rem:main_example}
Let $\delta^*_R = (\delta^{(k)}_R)_{k \in \mathbb{N}}$ be an iterative $q$-difference operator on $R$.
Assume that
\begin{equation}\label{constant}
\delta^*_R\ \text{is constant on}\ C,\ \text{or}\ \delta^{(k)}_R(c) = 0,\ c \in C,\ k > 1. 
\end{equation}
Then one sees as proving the last corollary that $\delta^{(k)}_R$, $k \in \mathbb{N}$, stabilize $C(t)$, and
the restricted operators $\delta^{(k)}_R|_{C(t)}$, $k \in \mathbb{N}$, give the unique 
iterative $q$-difference operator on $C(t)$ that consists of $C$-linear operators.  
They extend the $C$-linear operators on $C[t]$ determined by \eqref{def_of_delta}, and  
coincide with those given by \cite[Proposition 2.10]{H} as a main
example of iterative $q$-difference operators. 
Since $\delta^*_R$ may not stabilize $C(t)$ in general, the tensor
products in \cite[Lemma 2.12, Proposition 2.12]{H} should be taken over $\Bbbk(t)$, not over $C(t)$,
as far as the authors understand. 
\end{remark}

\subsection{}\label{subsec:construction_of_H}
We are going to construct a Hopf algebra $H$ over $\Bbbk = \Bbbk_0(q)$ which is closely related 
with iterative $q$-difference operators. In what follows we suppose that $\Bbbk= \Bbbk_0(q)$ is our ground field,
and let $\otimes$ denote the tensor product over $\Bbbk$. 
Vector spaces and (Hopf) algebras mean those over $\Bbbk$.

Let $G = \langle \sigma \mid \sigma^N = 1 \rangle$ denote the cyclic group of order $N$ generated 
by an element $\sigma$.
As usual, the group algebra $\Bbbk G$ is regarded as a Hopf algebra with $\sigma$ grouplike, i.e., 
$\Delta(\sigma) = \sigma \otimes \sigma$, $\varepsilon(\sigma) = 1$.
Let 
$$ B = \bigoplus_{i = 0}^\infty \, \Bbbk \delta^{(i)}$$
denote a vector space with basis $\delta^{(i)}, i \in \mathbb{N}$. 

Recall the definition of braided tensor category $\mathcal{YD}^{\Bbbk G}_{\Bbbk G}$ of the Yetter-Drinfeld
modules over ${\Bbbk G}$; see \cite[Section 10.6]{Mo}.
The Yetter-Drinfeld modules with which we shall treat here are, as the opposite-sided version of those
defined by \cite[Definition 10.6.10]{Mo}, supposed to be right $\Bbbk G$-modules and right 
$\Bbbk G$-comodules. The following is directly verified.

\begin{lemma}\label{lem:Yetter-Drinfeld}
$B$ is an object in $\mathcal{YD}^{\Bbbk G}_{\Bbbk G}$ with respect to the structure
$$\delta^{(i)} \leftharpoonup \sigma = q^i \, \delta^{(i)}, $$
$$\delta^{(i)} \mapsto \delta^{(i)} \otimes \sigma^i, \ B \to B \otimes \Bbbk G, $$
where $i \in \mathbb{N}$. Moreover, $B$ is a braided Hopf algebra in $\mathcal{YD}^{\Bbbk G}_{\Bbbk G}$
with respect to the algebra structure
\begin{equation}\label{alg-struc}
\delta^{(i)} \, \delta^{(j)} = \binom{i+j}{i}_q \, \delta^{(i+j)}, \quad \delta^{(0)}=1, 
\end{equation}
and the coalgebra structure
$$ \Delta(\delta^{(k)}) = \sum_{i+j=k} \, \delta^{(i)} \otimes \delta^{(j)}, \quad \varepsilon(\delta^{(k)}) = \delta_{k,0}, $$
where $i, j, k \in \mathbb{N}$.
\end{lemma}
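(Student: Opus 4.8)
The claim has three parts: that $B$ is a Yetter-Drinfeld module over $\Bbbk G$, that the given multiplication and comultiplication are well-defined algebra/coalgebra structures on $B$, and that these make $B$ a braided Hopf algebra in $\mathcal{YD}^{\Bbbk G}_{\Bbbk G}$. I would dispatch these in that order, since each later part presupposes the earlier ones. All computations reduce to finite sums indexed by $\mathbb{N}$, so there are no convergence or topological subtleties; the content is entirely in the combinatorics of the $q$-binomial coefficients.

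\emph{Yetter-Drinfeld structure.} First I would check that $\delta^{(i)} \leftharpoonup \sigma = q^i \delta^{(i)}$ defines a right $\Bbbk G$-module: this requires $q^{iN} = 1$, which holds since $N$ is the order of $q$, so $\sigma^N = 1$ acts as the identity. Then the comodule structure $\delta^{(i)} \mapsto \delta^{(i)} \otimes \sigma^i$ is coassociative and counital because $\sigma^i$ is grouplike. Finally the Yetter-Drinfeld compatibility condition must be verified; since $\Bbbk G$ is commutative and cocommutative and the action of $\sigma$ on $\delta^{(i)}$ is by the scalar $q^i$ while the coaction records the group element $\sigma^i$, the compatibility collapses to the scalar identity $q^i = q^i$ and is immediate. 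This is why the lemma can fairly be said to be ``directly verified.''

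\emph{Algebra and coalgebra structures.} For the algebra structure \eqref{alg-struc} I would note that the formula is the multiplication table of the \emph{$q$-divided power algebra}: setting $\delta^{(i)} = x^i/[i]_q!$ formally, the rule $\delta^{(i)}\delta^{(j)} = \binom{i+j}{i}_q \delta^{(i+j)}$ is exactly $(x^i/[i]_q!)(x^j/[j]_q!) = x^{i+j}/[i]_q![j]_q! = \binom{i+j}{i}_q x^{i+j}/[i+j]_q!$. Associativity then follows from the associativity of the formal power monomials, or directly from the identity $\binom{i+j}{i}_q\binom{i+j+k}{i+j}_q = \binom{j+k}{j}_q\binom{i+j+k}{j+k}_q$ (both equal $[i+j+k]_q!/[i]_q![j]_q![k]_q!$); commutativity is the symmetry $\binom{i+j}{i}_q = \binom{i+j}{j}_q$; and $\delta^{(0)} = 1$ is the unit. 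For the coalgebra structure, coassociativity of $\Delta(\delta^{(k)}) = \sum_{i+j=k}\delta^{(i)}\otimes\delta^{(j)}$ is the obvious equality $\sum_{i+j+l=k}\delta^{(i)}\otimes\delta^{(j)}\otimes\delta^{(l)}$ computed either way, and counitality is immediate from $\varepsilon(\delta^{(k)}) = \delta_{k,0}$. One must also check that $\Delta$ and $\varepsilon$ are morphisms in $\mathcal{YD}^{\Bbbk G}_{\Bbbk G}$, i.e., $\Bbbk G$-linear and $\Bbbk G$-colinear; linearity follows since $\sigma$ acts on the summand $\delta^{(i)}\otimes\delta^{(j)}$ of $\Delta(\delta^{(k)})$ by $q^i q^j = q^k$, matching its action on $\delta^{(k)}$, and colinearity follows similarly since the coaction records $\sigma^i\sigma^j = \sigma^k$.

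\emph{Bialgebra/Hopf axiom in the braided category.} The crux is that $\Delta\colon B \to B\underline{\otimes} B$ is an algebra map, where $B\underline{\otimes}B$ carries the \emph{braided} tensor-product algebra structure, with braiding $c(\delta^{(i)}\otimes\delta^{(j)}) = q^{ij}\,\delta^{(j)}\otimes\delta^{(i)}$ coming from the Yetter-Drinfeld data (the comodule degree $\sigma^i$ acting on $\delta^{(j)}$ gives the scalar $q^{ij}$). Thus I must verify
\[ \Delta(\delta^{(i)}\delta^{(j)}) = \Delta(\delta^{(i)})\,\Delta(\delta^{(j)}), \]
where the right side is multiplied using the braiding to move the second tensor factor of $\Delta(\delta^{(i)})$ past the first of $\Delta(\delta^{(j)})$. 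Concretely this amounts to the $q$-binomial identity
\[ \binom{i+j}{i}_q \sum_{a+b=i+j}\delta^{(a)}\otimes\delta^{(b)} = \sum_{\substack{a_1+a_2=i\\ b_1+b_2=j}} q^{\,a_2 b_1}\,\binom{a_1+b_1}{a_1}_q\binom{a_2+b_2}{a_2}_q\,\delta^{(a_1+b_1)}\otimes\delta^{(a_2+b_2)}, \]
which, comparing coefficients of $\delta^{(a)}\otimes\delta^{(b)}$ with $a+b=i+j$, is precisely the \emph{$q$-Vandermonde (Chu--Vandermonde) identity}
\[ \binom{i+j}{i}_q = \sum_{a_1+b_1=a}\, q^{\,(i-a_1)b_1}\binom{a}{a_1}_q\binom{b}{j-b_1}_q . \]
This is the one genuinely nontrivial combinatorial step, and I expect it to be the main obstacle — though it is a classical identity, so I would either cite it or give the short inductive proof. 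It is also worth checking that $\varepsilon$ is an algebra map (trivial) and that an antipode exists; for a graded connected braided bialgebra with finite-dimensional homogeneous components the antipode is automatic by the standard recursive formula, so I would invoke that rather than write $S$ explicitly. Once $\Delta$ and $\varepsilon$ are algebra maps in the braided sense, $B$ is a braided bialgebra, hence a braided Hopf algebra, completing the proof.
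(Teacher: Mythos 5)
Your verification is correct and is exactly the "direct verification" the paper leaves to the reader: the YD axioms collapse to scalar checks because $G$ is cyclic acting diagonally, and the braided bialgebra compatibility reduces to the $q$-Vandermonde identity, with the antipode automatic from connected grading. One cosmetic caution: since $q$ is a root of unity, justify the $q$-binomial identities as polynomial identities in $q$ (Gaussian binomials) rather than by dividing $q$-factorials, but this does not affect the argument.
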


Radford's biproduct (or bozonization) \cite{R} constructs from $B$ a Hopf algebra, $\Bbbk G \star B$. Let
$$H = (\Bbbk G \star B)^{cop}$$
denote the Hopf algebra obtained from $\Bbbk G \star B$ by replacing the coproduct with its opposite.
We see easily the following.

\begin{proposition}\label{prop:Hopf-alg-H}
The Hopf algebra $H$ is characterized by the following properties:
\begin{itemize}
\item[(a)] $H$ includes $\Bbbk G$ as a Hopf subalgebra;
\item[(b)] $H = \bigoplus_{i=0}^{\infty} \, (\Bbbk G) \delta^{(i)}$, a free left $\Bbbk G$-module with
basis $\delta^{(i)}, i \in \mathbb{N}$;
\item[(c)] The algebra structure on $H$ is determined by \eqref{alg-struc} and
$$ \delta^{(i)} \, \sigma = q^i \, \sigma \, \delta^{(i)}, \quad i \in \mathbb{N}; $$
\item[(d)] The coalgebra structure on $H$ is determined by
$$ \Delta(\delta^{(k)}) = \sum_{i+j=k} \, \sigma^j \, \delta^{(i)} \otimes \delta^{(j)}, \quad 
\varepsilon(\delta^{(k)}) = \delta_{k,0}, \ k \in \mathbb{N}.$$
\end{itemize}
\end{proposition}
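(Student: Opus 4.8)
The plan is to verify that properties (a)--(d) in fact hold for $H = (\Bbbk G \star B)^{cop}$, and then to check that these four properties determine a Hopf algebra uniquely. For the first half, I would unwind the definition of Radford's biproduct $\Bbbk G \star B$: as a vector space it is $\Bbbk G \otimes B$, hence is a free left $\Bbbk G$-module with basis $\delta^{(i)} = 1 \otimes \delta^{(i)}$, which is (b). The algebra structure of the biproduct is the smash product $B \# \Bbbk G$ built from the right $\Bbbk G$-action on $B$; reading off the smash-product relation with the action $\delta^{(i)} \leftharpoonup \sigma = q^i \delta^{(i)}$ from Lemma~\ref{lem:Yetter-Drinfeld} gives $\delta^{(i)} \sigma = q^i \sigma \delta^{(i)}$, and the internal product of $B$ gives \eqref{alg-struc}; together these are (c). Since we have not changed the product by passing to ${}^{cop}$, (c) and (b) hold for $H$. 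Property (a), that $\Bbbk G$ is a Hopf subalgebra, is immediate from the construction of the biproduct (the canonical maps $\Bbbk G \to \Bbbk G \star B \to \Bbbk G$ are Hopf algebra maps), and cocommutativity is not affected, so it persists in ${}^{cop}$.

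For (d) I would compute the coproduct of the biproduct. In $\Bbbk G \star B$ one has $\Delta_{\star}(b) = \sum (b^{(1)})_{(0)} \# (b^{(1)})_{(1)}\,\delta \otimes b^{(2)}$ in the standard bosonization formula, where $b \mapsto \sum b^{(1)} \otimes b^{(2)}$ is the braided coproduct of $B$ and $b \mapsto \sum b_{(0)} \otimes b_{(1)}$ its right $\Bbbk G$-coaction; applying this to $\delta^{(k)}$ with braided coproduct $\sum_{i+j=k}\delta^{(i)}\otimes\delta^{(j)}$ and coaction $\delta^{(i)} \mapsto \delta^{(i)} \otimes \sigma^i$ yields $\Delta_{\star}(\delta^{(k)}) = \sum_{i+j=k} \sigma^i \delta^{(i)} \otimes \delta^{(j)}$. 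Passing to the opposite coproduct swaps the tensor factors; after relabeling the summation indices $i \leftrightarrow j$ this is exactly $\Delta(\delta^{(k)}) = \sum_{i+j=k}\sigma^j\delta^{(i)}\otimes\delta^{(j)}$, which is (d); the counit formula $\varepsilon(\delta^{(k)}) = \delta_{k,0}$ is clear. (I would double-check the precise form of the bosonization coproduct formula and the side conventions, since a mismatch of left/right or of where the grouplike $\sigma^i$ sits is the easiest place to slip; this bookkeeping is the only real obstacle.)

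For the uniqueness half, suppose $H'$ is any Hopf algebra satisfying (a)--(d). By (b), $H'$ is free as a left $\Bbbk G$-module on the symbols $\delta^{(i)}$, so as a vector space $H' \cong \Bbbk G \otimes B$; by (a) and (c) the algebra structure of $H'$ is completely pinned down, since every element is a $\Bbbk$-combination of $\sigma^m \delta^{(i)}$ and the products of such are computed using only the relations in (c), \eqref{alg-struc}, and $\sigma^N = 1$. Likewise (a) and (d) pin down the coalgebra structure on the spanning set $\{\sigma^m \delta^{(i)}\}$: $\Delta(\sigma^m) = \sigma^m\otimes\sigma^m$ and $\Delta(\sigma^m\delta^{(i)}) = (\sigma^m\otimes\sigma^m)\Delta(\delta^{(i)})$ is determined by (d). Hence the identity map on the underlying vector space is simultaneously an algebra and coalgebra isomorphism $H \to H'$, i.e. a bialgebra isomorphism; since a bialgebra map between Hopf algebras automatically preserves the antipode, $H \cong H'$ as Hopf algebras. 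It remains only to observe that $H$ itself does satisfy (a)--(d) — which was the content of the first two paragraphs — so the list of properties is consistent and characterizing. This is the ``easily seen'' that the statement alludes to, and I would present it briefly.
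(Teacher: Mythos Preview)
Your approach is exactly what the paper has in mind; in fact the paper offers no proof beyond the sentence ``We see easily the following,'' so you are supplying all of the details, and the overall strategy---unwind the biproduct formulas to verify (a)--(d), then observe that these items pin down the bialgebra (hence Hopf-algebra) structure---is the correct one.

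There is, however, a concrete slip in precisely the place you flagged. Your stated $\Delta_{\star}(\delta^{(k)}) = \sum_{i+j=k}\sigma^{i}\delta^{(i)}\otimes\delta^{(j)}$ does \emph{not} become (d) after passing to the opposite coproduct: swapping the tensor factors gives $\sum\delta^{(j)}\otimes\sigma^{i}\delta^{(i)}$, and relabeling $i\leftrightarrow j$ then yields $\sum\delta^{(i)}\otimes\sigma^{j}\delta^{(j)}$, not $\sum\sigma^{j}\delta^{(i)}\otimes\delta^{(j)}$. The grouplike factor has landed in the wrong tensorand. For the right--right Yetter--Drinfeld convention used here (the biproduct realized on $\Bbbk G\otimes B$), the bosonization coproduct reads
\[
\Delta_{\star}(h\#b)=\sum h_{(1)}\#(b^{(1)})_{(0)}\ \otimes\ h_{(2)}(b^{(1)})_{(1)}\#b^{(2)},
\]
so that $\Delta_{\star}(\delta^{(k)})=\sum_{i+j=k}\delta^{(i)}\otimes\sigma^{i}\delta^{(j)}$; the coaction is applied to $b^{(1)}$ but its $\Bbbk G$-component is multiplied into the \emph{second} tensor factor. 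Now taking the opposite coproduct gives $\sum\sigma^{i}\delta^{(j)}\otimes\delta^{(i)}$, and relabeling $i\leftrightarrow j$ yields (d). With this bookkeeping corrected, your argument goes through; the uniqueness paragraph is already fine as written.
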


\begin{remark}\label{rem:grading}
As a braided Hopf algebra, $B = \bigoplus_{i=0}^{\infty} \, B(i)$ is strictly graded if we set
$B(i) = \Bbbk \delta^{(i)}$, $i \in \mathbb{N}$.  It follows that as a Hopf algebra, 
$H = \bigoplus_{i=0}^{\infty} \, H(i)$ is coradically graded if we set $H(i) = (\Bbbk G) \delta^{(i)}$,
$i \in \mathbb{N}$.
In particular, $H$ is a pointed Hopf algebra, which is neither commutative nor cocommutative, as is easily
seen.
\end{remark}

We set $\delta = \delta^{(1)}$. Since for every $ 0 \leq i < N$, \ $\delta^{(i)}$ is a multiple 
of $\delta^i$ by $[i]_q^{-1} \neq 0$, and $\delta^N = 0$, it follows that
$$ J := \bigoplus_{0 \leq i < N} \, (\Bbbk G) \delta^{(i)} \subset H $$
is a Hopf subalgebra which is generated by $G$, $\delta$.
Set 
$$ d_n = \delta^{(nN)}, \quad n \in \mathbb{N}. $$
Then it follows by $\dbinom{rN}{sN}_q = \dbinom{r}{s}$ (see \cite[Eq.~(1)]{H}) that
$$ d_m\, d_n= \binom{m+n}{m} \, d_{m+n}, \quad m, n \in \mathbb{N}. $$
Since for every $ 0 \leq i < N$, \ $\delta^{(nN+i)}$ is a multiple of $\delta^{(i)} d_n$
by $\dbinom{n+i}{i}_q^{-1} \neq 0$, one sees that $H$ is a free left (and right) $J$-module
with basis $d_n$, $n \in \mathbb{N}$. Thus,
$$ H = \bigoplus_{n=0}^{\infty} \, Jd_n. $$

\subsection{}\label{subsec:main_theorem}
Keep $R$, $\sigma_q$ as above. We see the following from Lemma \ref{lem:relations} and 
Proposition \ref{prop:Hopf-alg-H}.

\begin{proposition}\label{prop:corresp}
There is a one-to-one correspondence between
\begin{itemize}
\item the set of iterative $q$-difference operators $\delta_R^{*}$ on $R$, and
\item the set of left $H$-module structures $\rightharpoonup : H \otimes R \to R$
on $R$ such that
\begin{itemize}
\item[(i)] $h \rightharpoonup xy = \sum (h_1 \rightharpoonup x)(h_2 \rightharpoonup y), \
h \rightharpoonup 1 = \varepsilon (h)1, \ h \in H$, $x, y \in R$,
\item[(ii)] $\sigma \rightharpoonup x = \sigma_q(x), \ x \in R$, and
\item[(iii)] $\delta \rightharpoonup x = \dfrac{1}{(q-1)t} (\sigma_q(x) - x ), \ x \in R$.
\end{itemize}
\end{itemize}
Given a left $H$-module structure $\rightharpoonup$ from the second set, the corresponding 
iterative $q$-difference operator $\delta_R^{*} = (\delta^{(k)}_R)$ is given by 
$$\delta^{(k)}_R(x) = \delta^{(k)} \rightharpoonup x, \quad x \in R. $$
\end{proposition}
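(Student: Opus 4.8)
The plan is to construct the correspondence in both directions and check they are mutually inverse. First I would fix an $H$-module structure $\rightharpoonup : H\otimes R\to R$ satisfying (i)--(iii), and set $\delta^{(k)}_R(x) := \delta^{(k)}\rightharpoonup x$ for $x\in R$. I must then verify that $\delta^*_R = (\delta^{(k)}_R)_{k\in\mathbb N}$ is an iterative $q$-difference operator, i.e.\ that it satisfies Conditions (1)--(5) of Definition \ref{def:iter-dif-op}. Condition (1) is immediate since $\delta^{(0)} = 1$ acts as the identity by the module axiom, and Condition (2) is precisely the hypothesis (iii) combined with (ii). Condition (3) is additivity, which holds because $\rightharpoonup$ is $\Bbbk$-linear in the second variable. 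Condition (5), $\delta^{(i)}_R\circ\delta^{(j)}_R = \binom{i+j}{i}_q\delta^{(i+j)}_R$, follows by applying associativity of the module action to the algebra relation \eqref{alg-struc} in $H$, namely $\delta^{(i)}\delta^{(j)} = \binom{i+j}{i}_q\delta^{(i+j)}$.

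The one genuinely substantive point is Condition (4), the twisted Leibniz rule. Here I would use the coproduct formula from Proposition \ref{prop:Hopf-alg-H}(d), $\Delta(\delta^{(k)}) = \sum_{i+j=k}\sigma^j\delta^{(i)}\otimes\delta^{(j)}$, together with hypothesis (i): for $x,y\in R$,
\[
\delta^{(k)}\rightharpoonup(xy) = \sum_{i+j=k}\big((\sigma^j\delta^{(i)})\rightharpoonup x\big)\big(\delta^{(j)}\rightharpoonup y\big).
\]
Since $\sigma^j\rightharpoonup(\delta^{(i)}\rightharpoonup x) = \sigma_q^j(\delta^{(i)}_R(x))$ by (ii) and associativity of the action, the right-hand side is exactly $\sum_{i+j=k}\sigma_q^j\circ\delta^{(i)}_R(x)\,\delta^{(j)}_R(y)$, which (after relabelling the summation indices) is Condition (4). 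Thus every such module structure yields an iterative $q$-difference operator.

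Conversely, given an iterative $q$-difference operator $\delta^*_R$, I would define a left $H$-action by letting $\sigma$ act as $\sigma_q$, letting $\delta^{(k)}$ act as $\delta^{(k)}_R$, and extending using the presentation of $H$ as the free left $\Bbbk G$-module $\bigoplus_i(\Bbbk G)\delta^{(i)}$. The only thing to check is that this is well defined, i.e.\ that the defining relations of $H$ in Proposition \ref{prop:Hopf-alg-H}(c) are respected: the relation \eqref{alg-struc} among the $\delta^{(i)}$ is Condition (5) of Definition \ref{def:iter-dif-op}, and the commutation relation $\delta^{(i)}\sigma = q^i\sigma\delta^{(i)}$ is Lemma \ref{lem:relations}(e). (That $\sigma^N = 1$ acts trivially is Lemma \ref{lem:relations}(a).) Property (i) then holds because $H$ is a bialgebra acting compatibly: $\sigma_q$ is an algebra map (property (i) for $\sigma$) and $\delta^{(k)}_R$ satisfies the twisted Leibniz rule (Condition (4)), which is precisely property (i) for $\delta^{(k)}$ read through the coproduct of Proposition \ref{prop:Hopf-alg-H}(d) — here one uses the equivalence of the two forms of Condition (4) noted in the Introduction, together with Lemma \ref{lem:relations} to identify $\sigma_q^i\circ\delta^{(j)}_R$ with the action of $\sigma^i\delta^{(j)}$. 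Properties (ii), (iii) hold by construction.

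Finally I would observe that the two constructions are mutually inverse: starting from an operator, building the module, and reading off $\delta^{(k)}\rightharpoonup x$ returns $\delta^{(k)}_R(x)$ by definition; starting from a module satisfying (i)--(iii), the associated operator has $\delta^{(k)}_R = \delta^{(k)}\rightharpoonup(-)$, and since $H$ is generated as an algebra by $G$ and the $\delta^{(k)}$ (indeed by $G$ and $\delta$ together with the $d_n$), the action is determined on all of $H$ by its values on $\sigma$ and the $\delta^{(k)}$, hence recovered. The main obstacle is bookkeeping the twisted Leibniz rule against the non-cocommutative coproduct of $H$ in Proposition \ref{prop:Hopf-alg-H}(d) and keeping track of the $\sigma_q$-twists via Lemma \ref{lem:relations}; everything else is a routine transcription between the module axioms and Conditions (1)--(5).
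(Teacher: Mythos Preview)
Your proposal is correct and follows the same approach the paper intends: the paper's own proof is the single sentence ``We see the following from Lemma~\ref{lem:relations} and Proposition~\ref{prop:Hopf-alg-H},'' and you have simply unpacked that citation into the explicit two-way verification. One small omission worth noting: in the backward direction, to get a left $H$-module structure over $\Bbbk$ you need each $\delta^{(k)}_R$ to be $\Bbbk$-linear, not merely additive as Definition~\ref{def:iter-dif-op}(3) guarantees; this is supplied by Lemma~\ref{lem:relations}(b), which you should cite alongside parts (a) and (e).
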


Assume that we are given a left $H$-module structure $\rightharpoonup : H \otimes R \to R$
on $R$ which satisfies Conditions (i)--(iii) above.
By (i), $R$ is an $H$-module algebra so that we can construct the smash-product
algebra $R \# H$; this is regarded as before, as an $R$-ring and $R$-coalgebra.
Given an element $h \in H$, we denote the element $1 \# h$ in $R \# H$ 
simply by $h$. Let
$$ I = \left( \delta - \frac{1}{(q-1)t}(\sigma - 1) \right)$$
denote the ideal of $R \# H$ generated by the one element, and define 
\begin{equation}\label{def_of_mathcalH}
\mathcal{H} = R \# H / \, I .
\end{equation} 
The semi-direct product $R \rtimes G \, (= R \# \Bbbk G)$ arises from the restricted action by
$G$ on $R$, and we have natural $R$-ring maps $R \rtimes G \hookrightarrow R \# H \to \mathcal{H}$.

\begin{lemma}\label{lem:coideal}
We have the following. 
\begin{itemize}
\item[(1)] $I$ is a coideal of $R \# H$, so that $\mathcal{H}$ is an $R$-coalgebra.
\item[(2)] The natural images of $d_n$, $n \in \mathbb{N}$, form a left $R \rtimes G$-free basis in $\mathcal{H}$.
\end{itemize}
\end{lemma}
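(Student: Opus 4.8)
The plan is to prove the two assertions in turn, treating (1) as the conceptual core and (2) as a bookkeeping consequence. For (1), the key is the identity that makes Lemma~\ref{lem:main-example} work: cocommutativity of the quotient coalgebra forces $I$ to be a coideal. Concretely, write $g = \delta - \tfrac{1}{(q-1)t}(\sigma-1) \in R\# H$, so $I = (g)$. The natural candidate is to show directly that $\Delta_{R\# H}(g)$ lies in $I\otimes_R (R\# H) + (R\# H)\otimes_R I$. Using Proposition~\ref{prop:Hopf-alg-H}(d), the coproduct of $\delta = \delta^{(1)}$ in $H$ is $\Delta(\delta) = \sigma\otimes\delta^{(0)} + \delta^{(0)}\otimes\delta = \sigma\otimes 1 + 1\otimes\delta$ (here the "opposite" convention of $H = (\Bbbk G\star B)^{cop}$ must be tracked carefully), while $\sigma$ is grouplike. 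Hence
\[
\Delta(g) = \sigma\otimes\delta + \delta\otimes 1 - \tfrac{1}{(q-1)t}\bigl(\sigma\otimes\sigma - 1\otimes 1\bigr),
\]
but one must remember that $\tfrac{1}{(q-1)t}$ is a scalar from $R$ and that in $R\# H$ moving it across $\sigma$ introduces $\sigma_q$; the relation defining $R\# H$, namely $h x = \sum (h_1\rightharpoonup x) h_2$, is exactly what is needed to rewrite $\sigma\otimes\sigma\cdot\tfrac{1}{(q-1)t}$ and $\sigma\otimes\delta$ into the correct form. After this rearrangement one should be able to factor $\Delta(g)$ as a sum of a term in $I\otimes_R(R\# H)$ and a term in $(R\# H)\otimes_R I$; checking $\varepsilon(g) = \varepsilon(\delta) - \tfrac{1}{(q-1)t}(\varepsilon(\sigma)-1) = 0 - 0 = 0$ is immediate. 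This establishes that $I$ is a coideal, hence $\mathcal{H} = R\# H/I$ inherits an $R$-coalgebra structure.

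For (2), recall from the end of Subsection~\ref{subsec:construction_of_H} that $H = \bigoplus_{n\ge 0} J d_n$ is a free left (and right) $J$-module on the $d_n$, where $J = \bigoplus_{0\le i<N}(\Bbbk G)\delta^{(i)}$ is the Hopf subalgebra generated by $G$ and $\delta$. Correspondingly $R\# H = \bigoplus_{n\ge 0}(R\# J) d_n$ is a free left $R\# J$-module on the $d_n$ (the $d_n$ lie in the $\Bbbk$-subalgebra $B$-part and each $\delta^{(i)}$, $0\le i<N$, is a nonzero scalar multiple of $\delta^i$, so the $R\#J$-module decomposition is clear, and the generator $g$ of $I$ lies in $R\# J$). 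Since $I = (g)$ with $g\in R\# J$, we get $I = \bigoplus_{n\ge0}(I_0)\,d_n$ where $I_0 = (R\# J)\,g\,(R\# J) \subset R\# J$ — one checks that left- and two-sided ideal generated by $g$ agree up to the $R\# J$-module structure, or more carefully that $I\cap (R\# J)d_n = I_0 d_n$ using that the $d_n$ are central enough (they commute with $G$ since $q^N=1$, and $d_n\sigma = \sigma d_n$, $d_n\delta = \delta d_n$ by Proposition~\ref{prop:Hopf-alg-H}(c) with exponent $q^{nN}=1$). Therefore
\[
\mathcal{H} = (R\# H)/I \;\cong\; \bigoplus_{n\ge 0}\bigl((R\# J)/I_0\bigr)\,d_n
\]
as a left $(R\# J)/I_0$-module, free on the images of $d_n$. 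It remains to identify $(R\# J)/I_0$ with $R\rtimes G = R\#\Bbbk G$. This is where the defining relation of $I$ does its job: modulo $I_0$, $\delta$ is identified with $\tfrac{1}{(q-1)t}(\sigma-1)$, so every $\delta^{(i)} = [i]_q^{-1}\delta^i$ ($0\le i<N$) becomes an element of $R\rtimes G$; hence the surjection $R\rtimes G \hookrightarrow R\# J \twoheadrightarrow (R\# J)/I_0$ is also surjective, and a rank/freeness count (both sides are free left $R$-modules, $R\# J$ of rank $N\cdot N$ over $R$ via basis $\{\sigma^a\delta^{(i)}\}$, and killing $\delta - \tfrac{1}{(q-1)t}(\sigma-1)$ drops to rank $N$) shows it is an isomorphism. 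Combining, the images of $d_n$, $n\in\mathbb{N}$, form a left $R\rtimes G$-free basis of $\mathcal{H}$.

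The main obstacle I anticipate is purely computational but genuinely delicate: in part (1), correctly tracking the interaction between the scalar $\tfrac{1}{(q-1)t}\in R$, the grouplike $\sigma$ (whose action is $\sigma_q$), and the cop convention on $H$, so that $\Delta(g)$ lands precisely in $I\otimes_R(R\#H) + (R\#H)\otimes_R I$ and not merely in something close to it. A clean way to organize this is to invoke Lemma~\ref{lem:main-example} directly: it suffices to check that $\mathcal{H}$ is cocommutative as an $R$-coalgebra — but that is false for $\mathcal{H}$ in general (compare Remark~\ref{rem:grading}), so one cannot shortcut through that lemma and must do the coideal check by hand. The secondary obstacle, in part (2), is justifying that the two-sided ideal $I$ meets each graded piece $(R\#J)d_n$ in exactly $I_0 d_n$ rather than something larger; this hinges on the centrality of the $d_n$ relative to $R\#J$ inside $R\#H$, which follows from $q^{nN}=1$ and Proposition~\ref{prop:Hopf-alg-H}(c), together with the fact that the $R$-coefficients commute past $d_n$ up to $\sigma_q^{nN}=\mathrm{id}_R$.
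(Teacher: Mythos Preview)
Your overall strategy matches the paper's: reduce to the sub-smash-product $R\#J$, analyze the ideal $I_0$ generated by $\xi = g$ there, and then use the decomposition $R\#H = \bigoplus_n (R\#J)d_n$. But two steps in your execution do not go through as written.

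First, for part~(1): knowing $\Delta(g)\in I\otimes_R(R\#H)+(R\#H)\otimes_R I$ does \emph{not} by itself imply that the two-sided ideal $I$ is a coideal. The $R$-coalgebra map $\Delta_R$ on $R\#H$ is $R$-linear and restricts to the multiplicative $\Delta_H$ on $H$, but it is not multiplicative on $R\#H$ in any sense you can use here (precisely because $H$ is not cocommutative, $R\#H$ is not a $\times_R$-bialgebra). So you cannot pass from the generator to the whole ideal automatically. The paper's key step, which you are missing, is the set of commutation relations $\xi\,x=(\sigma_q x)\,\xi$ for $x\in R$, $\xi\sigma=q\sigma\xi$, and $\xi\delta=\delta\xi+\tfrac{q-1}{q}u\,\xi$. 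These force $I_0=R\,\xi\,J$: every element of $I_0$ is a sum $\sum r\,\xi\,j$ with $r\in R$, $j\in J$. One then computes $\Delta_R(r\xi j)=r\bigl(\sum \sigma j_1\otimes \xi j_2+\xi j_1\otimes j_2\bigr)$ directly, using only $R$-linearity of $\Delta_R$ and multiplicativity of $\Delta_H$ on $H$; this lands in the right place. The same description then gives $I=R\,\xi\,H$ and the coideal property for $I$.

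Second, for part~(2): your assertion that ``$R$-coefficients commute past $d_n$ up to $\sigma_q^{nN}=\mathrm{id}_R$'' is false. In $R\#H$ one has $d_n\,x=\sum_{i+j=nN}\bigl(\sigma_q^j\delta^{(i)}_R(x)\bigr)\delta^{(j)}$, and the terms with $i\ge 1$ are nonzero in general (already for $x=u=\tfrac{1}{(q-1)t}$). What is actually needed is that $d_n$ commutes with $\sigma$ and $\delta$ (hence with $J$) and that $I_0=R\xi J$; together with the right-ideal computation $I_0\,d_n\,(R\#J)\subset\bigoplus_{k\le n} I_0 d_k$ these give that $\bigoplus_n I_0 d_n$ is a two-sided ideal. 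Finally, your ``rank/freeness count'' for $(R\#J)/I_0\cong R\rtimes G$ does not establish injectivity: a surjection of free $R$-modules of the same finite rank need not be injective without further argument. The paper instead writes down the explicit $R$-ring map $R\#J\to R\rtimes G$, $\sigma\mapsto\sigma$, $\delta\mapsto u(\sigma-1)$, checks it kills $\xi$, and observes it inverts the natural map. (A side remark: your claim that $\mathcal{H}$ is not cocommutative, citing Remark~\ref{rem:grading}, confuses $H$ with $\mathcal{H}$; in fact $\mathcal{H}$ \emph{is} cocommutative, but only after Proposition~\ref{prop:cocom}, so you are right that one cannot invoke it here.)
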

\begin{proof}
Set 
$$ u :=  \frac{1}{(q-1)t} \, \in R, \quad \xi := \delta - u(\sigma - 1) \, \in R \# H. $$
Then $I$ is generated by $\xi$. Note that the coproduct $\Delta(\xi)$ on $R \# H$ is given by
\begin{equation}\label{Delta-of-xi}
\Delta(\xi) = \sigma \otimes \xi + \xi \otimes 1 \, .
\end{equation}
Note that $\sigma \rightharpoonup u = \frac{1}{q}u$, 
$\delta \rightharpoonup u = -\frac{q-1}{q}u^2$, and
$\xi \rightharpoonup x = 0$ for all $x \in R$. Then we compute in $R \# H$, 
\begin{equation}\label{relations}
\xi \, \sigma = q \, \sigma \, \xi \, , \quad \xi \, \delta = \delta \, \xi + \frac{q-1}{q}u \, \xi \, , \quad
\xi \, x = (\sigma \rightharpoonup x) \xi, \, x \in R. 
\end{equation}

With the action by $H$ restricted to $J$, $R$ is a $J$-module algebra. The associated
smash product $R \# J$ is an $R$-subring and $R$-subcoalgebra of $R \# H$
which contains $\xi$. Let $I_0$
denote the ideal of $R \# J$ generated by $\xi$. We see from \eqref{relations} that $I_0 = R \, \xi J$.
This together with \eqref{Delta-of-xi} proves that $I_0$ is a coideal. The 
natural embedding composed with the canonical projection 
\begin{equation}\label{isom-from-RG}
R \rtimes G \hookrightarrow R \# J \to R \# J /I_0
\end{equation}
is an isomorphism, since we see that the $R$-ring map
$R \# J \to R \rtimes G$ well defined by $\sigma \mapsto \sigma$, $\delta \mapsto u(\sigma - 1)$
induces an inverse.

We see that 
$$ I = \bigoplus_{n=0}^{\infty} \, I_0 \, d_n \, , $$
since the direct sum on the right-hand side is an ideal of $R \# H$, as is seen from the fact 
that each $d_n$ commutes with $\xi$. The last equation together with the isomorphism
given in \eqref{isom-from-RG} concludes the proof.
\end{proof}

We denote still by $d_n$ its natural image in $\mathcal{H}$. Then we have
\begin{equation}\label{RG-free}
\mathcal{H} = \bigoplus_{n=0}^{\infty} \, RG d_n = \bigoplus_{n=0}^{\infty} \, R d_nG
\end{equation} 
by Part 2 above, and since each $d_n$ commutes with $\sigma$.
Since by the right multiplication, $G$ acts on $\mathcal{H}$ as $R$-coalgebra automorphisms,
$\mathcal{H}$ is a module $R$-coalgebra over the group $R$-Hopf algebra $RG$. Here, we emphasize
that each element in $R \, (\subset RG)$ is supposed to act on $\mathcal{H}$ by the \emph{left} multiplication.
Let $(RG)^+$ denote the augmentation ideal of $RG$, i.e., the kernel of the counit 
$\varepsilon : RG \to R$, and set
$$ Z = \mathcal{H} / \mathcal{H}(RG)^+. $$
This is a quotient $R$-coalgebra of $\mathcal{H}$. Let $\pi : \mathcal{H} \to Z$ denote the
quotient map, and set $\overline{d}_n = \pi(d_n)$, $n \in \mathbb{N}$. 

\begin{lemma}\label{lem:R-free}
$Z$ is free as an $R$-module,
$$ Z = \bigoplus_{n=0}^{\infty} \, R \overline{d}_n , $$
with basis $\overline{d}_n , n \in \mathbb{N}$. The $R$-coalgebra structure on $Z$ is 
determined by
\begin{equation}\label{R-coalgZ}
\Delta(\overline{d}_n) = \sum_{l+m=n} \, \overline{d}_l \otimes \overline{d}_m, \quad 
\varepsilon(\overline{d}_n) = \delta_{n,0}.
\end{equation}
\end{lemma}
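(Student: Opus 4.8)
The plan is to deduce everything from the decomposition $\mathcal{H} = \bigoplus_{n=0}^{\infty} RGd_n$ of \eqref{RG-free} together with the coalgebra structure of $H$ recorded in Proposition \ref{prop:Hopf-alg-H}(d). First I would establish the $R$-module freeness of $Z$. Since $\mathcal{H} = \bigoplus_{n} RGd_n$ as a left $R$-module (equivalently $\bigoplus_n Rd_nG$), and $RG = \bigoplus_{i=0}^{N-1} R\sigma^i$ is $R$-free with the $\sigma^i$ as basis, we get $\mathcal{H} = \bigoplus_{n}\bigoplus_{i=0}^{N-1} R\,\sigma^i d_n$ (using that $d_n$ commutes with $\sigma$, so $\sigma^i d_n = d_n\sigma^i$). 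The submodule $\mathcal{H}(RG)^+$ is spanned over $R$ by the elements $d_n\sigma^i - d_n$ (since $(RG)^+ = \bigoplus_{i}R(\sigma^i-1)$ and $d_n(\sigma^i-1) = d_n\sigma^i - d_n$), so in the quotient $Z = \mathcal{H}/\mathcal{H}(RG)^+$ all the classes $\pi(d_n\sigma^i)$ collapse to $\overline{d}_n := \pi(d_n)$. A dimension count in each "block" indexed by $n$ then shows $Z = \bigoplus_{n=0}^{\infty} R\overline{d}_n$ is $R$-free with basis $\overline{d}_n$; concretely, the composite $\bigoplus_n Rd_n \hookrightarrow \mathcal{H} \twoheadrightarrow Z$ is an $R$-module isomorphism, because the quotient map kills exactly the complementary summand $\bigoplus_n\bigoplus_{i\ge 1}R(d_n\sigma^i - d_n)$, and one checks directly (or by right-exactness of the quotient and freeness of the pieces) that this complementary summand together with $\bigoplus_n Rd_n$ spans $\mathcal{H}$ with the former equal to $\mathcal{H}(RG)^+$.

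Next I would compute the coproduct. Recall $d_n = \delta^{(nN)}$ in $H$, so by Proposition \ref{prop:Hopf-alg-H}(d),
\[
\Delta(d_n) = \Delta(\delta^{(nN)}) = \sum_{a+b = nN} \sigma^{b}\delta^{(a)} \otimes \delta^{(b)} \quad\text{in } H\otimes H,
\]
and hence the same formula holds for the image of $d_n$ in $\mathcal{H}$ under the $R$-coalgebra structure map $\Delta : \mathcal{H} \to \mathcal{H}\otimes_R\mathcal{H}$. Now apply $\pi\otimes\pi : \mathcal{H}\otimes_R\mathcal{H} \to Z\otimes_R Z$. On each tensor factor, $\pi$ sends $\sigma^b\delta^{(a)}$ and $\delta^{(b)}$ to $\pi(\delta^{(a)})$ and $\pi(\delta^{(b)})$ respectively, because $\pi$ kills the action of $(RG)^+$ on the right — so powers of $\sigma$ appearing on the left of (or attached to) a monomial are irrelevant once we pass to $Z$. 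Writing $a = lN + r$, $b = mN + s$ with $0\le r,s < N$, the monomial $\delta^{(a)}$ equals a nonzero scalar multiple of $\delta^{(r)}d_l$ (as noted just before \eqref{RG-free}), and $\pi(\delta^{(r)}d_l) = 0$ unless $r = 0$, in which case it is $\overline{d}_l$ up to the scalar $\binom{l}{0}_q = 1$; similarly for $b$. So the only surviving terms in $(\pi\otimes\pi)\Delta(d_n)$ are those with $a = lN$, $b = mN$, $l+m = n$, giving $\Delta(\overline{d}_n) = \sum_{l+m=n}\overline{d}_l\otimes\overline{d}_m$. The counit formula $\varepsilon(\overline{d}_n) = \delta_{n,0}$ is immediate from $\varepsilon(\delta^{(nN)}) = \delta_{nN,0} = \delta_{n,0}$ and compatibility of $\varepsilon$ with $\pi$.

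The main obstacle — and the only point requiring care — is the bookkeeping when passing from $\Delta(d_n)$ in $H\otimes H$ to the expression in $\mathcal{H}\otimes_R\mathcal{H}$ and then in $Z\otimes_R Z$: one must justify that the $\sigma$-twists $\sigma^b$ in front of $\delta^{(a)}$ really do disappear under $\pi\otimes\pi$, and that the decomposition $a = lN + r$ together with $\delta^{(a)} \in \Bbbk^{\times}\delta^{(r)}d_l$ correctly identifies which terms survive. This is exactly the content of Lemma \ref{lem:coideal}(2) and the $J$-basis description of $H$ from Section \ref{subsec:construction_of_H}, so no new ideas are needed; it is purely a matter of tracking indices modulo $N$ and invoking $\binom{rN}{sN}_q = \binom{r}{s}$ to see that the surviving $q$-binomial coefficients are the ordinary ones — but here they are all $1$ anyway since $l$ or the relevant lower index is $0$ after the collapse. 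I would present this step with a short displayed computation rather than in prose.
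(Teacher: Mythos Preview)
Your proposal is correct and supplies precisely the direct verification that the paper omits (the paper's proof reads simply ``This is verified directly''). One small clean-up for the coproduct step: since $a+b=nN$, a term $\sigma^{b}\delta^{(a)}\otimes\delta^{(b)}$ survives under $\pi\otimes\pi$ only when $N\mid b$ (otherwise $\pi(\delta^{(b)})=0$), and in that case $\sigma^{b}=\sigma^{mN}=1$ in $\mathcal{H}$, so the left-hand power of $\sigma$ disappears outright and you need not worry about commuting it past $\delta^{(a)}$.
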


\begin{proof}
This is verified directly.
\end{proof}

Let
$$ \gamma : \mathcal{H} = \bigoplus_{n=0}^{\infty} \, R d_n G \to RG $$
denote the projection onto the 0-th component.

\begin{lemma}\label{lem:convolution-invertible}
$\gamma$ is $RG$-linear, and is invertible with respect to the convolution product.
\end{lemma}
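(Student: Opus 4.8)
The plan is to verify that $\gamma$ is a morphism of left $RG$-modules and then to exhibit an explicit convolution inverse, using the decomposition $\mathcal{H} = \bigoplus_{n} R d_n G$ from \eqref{RG-free} together with the coproduct formula of Proposition~\ref{prop:Hopf-alg-H}(d). For $RG$-linearity, recall that $R$ acts by left multiplication and $G$ acts by right multiplication; since $d_n G$ is the $n$-th homogeneous component in the sense of \eqref{RG-free} and each $d_n$ commutes with $\sigma$, both the left $R$-action and the right $G$-action preserve the $0$-th component $RG = R d_0 G$ and are compatible with the projection $\gamma$. Thus $\gamma$ is $RG$-linear; this part is routine bookkeeping with the decomposition \eqref{RG-free}.

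For convolution-invertibility, first I would pin down $\Delta$ on $\mathcal{H}$. Passing the formula in Proposition~\ref{prop:Hopf-alg-H}(d) through the quotient $R\# H \to \mathcal{H}$ and using $d_n = \delta^{(nN)}$, one gets, for each $n$,
\[
\Delta(d_n) = \sum_{l+m=n} \sigma^{mN}\, d_l \otimes d_m = \sum_{l+m=n} d_l \otimes d_m,
\]
because $\sigma^{N}=1$ in $G$; more precisely the cross terms involving $\delta^{(i)}$ with $0<i<N$ all map to zero in $Z$-like fashion but here we only need that $\Delta(d_n)$, modulo the relevant identifications, has the triangular shape $\sum_{l+m=n} d_l\otimes d_m$ plus terms lying in $\mathcal{H}(RG)^+\otimes\mathcal{H}$. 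The counit satisfies $\varepsilon(d_n) = \delta_{n,0}$ and $\varepsilon|_{RG} = \varepsilon_{RG}$. Then I would define $\gamma^{*}:\mathcal{H}\to RG$ recursively on the components $R d_n G$ by $\gamma^{*}(x) = \varepsilon(x)$ for $x\in R d_0 G = RG$, and
\[
\gamma^{*}(d_n g) = -\sum_{l=1}^{n} \gamma(d_l)\, \gamma^{*}(d_{n-l} g) = -\sum_{l=1}^{n} \gamma^{*}(d_{n-l} g)
\]
for $n\geq 1$, $g\in G$, extended $R$-linearly on the left; here $\gamma(d_l)=\delta_{l,0}$ after projecting, so in fact the recursion collapses and $\gamma^{*}(d_n g)$ is an explicit alternating sum, but the cleanest way to present it is via the recursion. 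A direct computation using the triangular coproduct shows $\gamma * \gamma^{*} = \varepsilon = \gamma^{*} * \gamma$ componentwise: on $R d_0 G$ this is the identity $\varepsilon * \varepsilon = \varepsilon$, and on $R d_n G$ the defining recursion is exactly the vanishing of the degree-$n$ part of $\gamma * \gamma^{*}$.

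The main obstacle is keeping careful track of which terms in $\Delta(d_n)$ actually survive and lie in which tensor-component of $\mathcal{H}\otimes_R\mathcal{H}$, since the true coproduct on $H$ involves all $\delta^{(i)}$, not just the $d_n=\delta^{(nN)}$, and one must use the ideal $I$ and the relations \eqref{relations} (together with $\xi$ commuting with each $d_n$) to see that the extra terms do not obstruct the recursion for $\gamma^{*}$. Once the triangular structure of $\Delta$ on the $d_n$ is isolated, the convolution inverse is forced and the verification $\gamma * \gamma^{*} = \varepsilon = \gamma^{*} * \gamma$ is a finite computation in each degree. I expect the whole argument to be short, with the only subtlety being this identification of the coproduct on the $d_n$; everything else is formal manipulation with the $RG$-module decomposition \eqref{RG-free}.
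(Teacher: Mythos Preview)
Your overall plan---use the filtration $\mathcal{H}_n=\bigoplus_{k\le n}Rd_kG$ and build a convolution inverse degree by degree---is exactly the idea behind the paper's proof. But your execution has a genuine error at the very first step. On $\mathcal{H}_0=RG$ the map $\gamma$ is the \emph{identity} of $RG$, and in the convolution algebra $\operatorname{Hom}_R(RG,RG)$ the inverse of the identity is the antipode $g\mapsto g^{-1}$, not the counit. Concretely, for $g\in G$ one has $\Delta(g)=g\otimes g$, so $(\gamma*\gamma^*)(g)=g\cdot\gamma^*(g)$; for this to equal $\varepsilon(g)=1$ you need $\gamma^*(g)=g^{-1}$. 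With your choice $\gamma^*|_{RG}=\varepsilon$ you get $(\gamma*\gamma^*)(g)=g\neq 1$ for $g\neq 1$, so the construction fails before the recursion begins. Your recursive formula is also internally inconsistent: you write $-\sum_{l=1}^{n}\gamma(d_l)\,\gamma^*(d_{n-l}g)=-\sum_{l=1}^{n}\gamma^*(d_{n-l}g)$ while also asserting $\gamma(d_l)=\delta_{l,0}$, which would make that sum zero, not what you wrote.

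There is a second, subtler issue. The coproduct $\Delta(d_n)$ in $\mathcal{H}$ is \emph{not} simply $\sum_{l+m=n}d_l\otimes d_m$: the formula $\Delta(\delta^{(nN)})=\sum_{i+j=nN}\sigma^j\delta^{(i)}\otimes\delta^{(j)}$ has many terms with $i,j$ not divisible by $N$, and after rewriting each $\delta^{(lN+a)}$ as an $RG$-multiple of $d_l$ these contribute genuine terms in $\sum_{l+m<n}RGd_l\otimes_R RGd_m$. You flag this as ``the main obstacle'' but never actually resolve it; a correct recursion for $\gamma^*$ must absorb these lower terms, and they do not ``map to zero in $Z$-like fashion'' for the purpose at hand. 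The paper sidesteps both difficulties at once: rather than writing an explicit inverse, it observes that the coalgebra filtration makes the kernel of the restriction $\operatorname{Hom}_R(\mathcal{H}_n,RG)\to\operatorname{Hom}_R(\mathcal{H}_0,RG)$ a nilpotent ideal, so any element restricting to a unit on $\mathcal{H}_0$ (here the identity of $RG$, which is convolution-invertible) lifts to a unit. Your recursion, once the base case is fixed to the antipode and the lower-order terms are carried along, is precisely the unwinding of this lifting-through-a-nilpotent-ideal argument.
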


\begin{proof}
Obviously, $\gamma$ is $RG$-linear. Note that for every $n \in \mathbb{N}$, 
\begin{equation}\label{coradical_filtration}
\mathcal{H}_n := \bigoplus_{k \leq n} \, Rd_{k}G
\end{equation}
is an $R$-subcoalgebra of $\mathcal{H}$,
and $\mathcal{H}$ is a union of all $\mathcal{H}_n$. To see that $\gamma$ is invertible,
it suffices to show that the restriction $\gamma |_{\mathcal{H}_n}$
is invertible in the $R$-algebra $\mathrm{Hom}_R(\mathcal{H}_n, RG)$ of all $R$-linear
maps $\mathcal{H}_n \to RG$. Since the $R$-coalgebra $\mathcal{H} = \bigcup_{n} \, \mathcal{H}_n$ 
is filtered so that $\Delta(\mathcal{H}_n) \subset \sum_{l + m = n} \, \mathcal{H}_l \otimes_R \mathcal{H}_m$
(see Remark \ref{rem:grading}), we see that the kernel of the restriction map 
$\mathrm{Hom}_R(\mathcal{H}_n, RG) \to \mathrm{Hom}_R(\mathcal{H}_0, RG)$
is a nilpotent ideal. The desired invertibility follows since $\gamma |_{\mathcal{H}_n}$ 
is restricted to the invertible, identity map on $\mathcal{H}_0 = RG$.
\end{proof}

By the dual result of \cite[Theorem 7.2.2]{Mo}, it follows from the lemma above that 
the right $RG$-module $R$-coalgebra $\mathcal{H}$ is isomorphic to the crossed
coproduct which is constructed on $Z \otimes RG$ by the coaction 
$$ \rho : Z \to RG \otimes_R Z, \quad \rho(\overline{d}_n) = (\gamma \otimes \pi) \circ \Delta(d_n) $$
and the (dual) cocycle
$$ \tau : Z \to RG \otimes_R RG, \quad \tau(\overline{d}_n) = (\gamma \otimes \gamma) \circ \Delta(d_n), $$ 
where $\Delta(d_n)$ denotes the coproduct on $\mathcal{H}$. We see from $\sigma^N = 1$ that $\rho$ is trivial, i.e.,
$\rho(\overline{d}_n) = 1 \otimes \overline{d}_n, \ n \in \mathbb{N}$; this is equivalent to 
saying that $\pi : \mathcal{H} \to Z$ is co-central. 
By definition each $\tau(\overline{d}_n)$ is contained in 
$\Bbbk (t)G \otimes_{\Bbbk (t)} \Bbbk (t)G$.
Let $G$ act on $\mathcal{H}$, $RG$ and $\mathcal{H} \otimes_R \mathcal{H}$ by the conjugations
$$ x \mapsto gxg^{-1},\quad x \otimes y \mapsto gxg^{-1} \otimes gyg^{-1}. $$
Since $\Delta$, $\gamma$ are $G$-equivariant (or preserve the conjugation), and each $\overline{d}_n$ is 
$G$-invariant,
it follows that $\tau(\overline{d}_n)$ is $G$-invariant.  If we denote the field of $G$-invariants
in $\Bbbk (t)$ by
$$ K = \Bbbk (t^N), $$
it follows that $\tau(\overline{d}_n) \in KG \otimes_K KG, \, n \in \mathbb{N}$.
Let 
$$ Z_K = \bigoplus_{n = 0}^{\infty} \, K \overline{d}_n $$
denote the obvious $K$-form of $Z$. Let 
$$ 1 \to \mathrm{Reg}(Z_K, K) \to \mathrm{Reg}(Z_K, KG) \to \mathrm{Reg}(Z_K, KG \otimes_K KG) \to... $$
denote the complex for computing the dual Sweedler cohomology (see \cite{S1}) of the $K$-Hopf algebra $KG$
with coefficients in the $KG$-comodule $K$-coalgebra $Z_K$, where the $KG$-comodule structure on $Z_K$ 
is trivial. 
Here, $\mathrm{Reg}(Z_K, A)$ denote the the abelian group of all convolution-invertible $K$-linear
maps $f : Z_K \to A$, where $A = (KG)^{\otimes n}$; it is identified with the group $A[[T]]^{\times}$
of all invertible power series over $A$, via $f \mapsto \sum_{n=0}^{\infty} \, f(\overline{d}_n)T^n$.
We may suppose that $\tau$ is a 2-cocycle in the last complex. Note that the restriction 
$\tau |_{K\overline{d}_0}$ is trivial. 
Since the $K$-Hopf algebra $KG$ is cosemisimple,
and the $K$-coalgebra $Z_K$ includes $K\overline{d}_0$ as a unique simple $K$-subcoalgebra,  
it follows by \cite[Theorem 4.1]{M} that $\tau$ is the coboundary $\partial \nu$ of 
some 1-cochain $\nu \in \mathrm{Reg}(Z_K, KG)$, i.e., 
$$ \tau(\overline{d}_n) = \sum_{k + l + m = n} (\nu(\overline{d}_k) \otimes 1) \Delta(\nu^{-1}(\overline{d}_l)) 
(1 \otimes \nu(\overline{d}_m)), \ n \in \mathbb{N}. $$
Since $(\varepsilon \otimes \varepsilon)\circ \tau = \varepsilon$, we have $\varepsilon\circ \nu = \varepsilon$. 
Since $\tau(\overline{d}_0) = 1 \otimes 1$, we have $\nu(\overline{d}_0) \in G$. By replacing $\nu$ with
$$ \overline{d}_n \mapsto \nu(\overline{d}_n)\, \nu(\overline{d}_0)^{-1},\quad n \in \mathbb{N}, $$
we may suppose $\nu(\overline{d}_0) = 1$. 
Define elements $d_n'$ in $\mathcal{H}$ by
$$ d_n' = \sum_{l + m = n} \nu(\overline{d}_l) \, d_m, \quad n \in \mathbb{N}. $$

\begin{proposition}\label{prop:cocom}
Keep the notation as above.
\begin{itemize}
\item[(1)] We have
\begin{equation*}
\mathcal{H} = \bigoplus_{n=0}^{\infty} \, R d_n' G = \bigoplus_{n=0}^{\infty} \, RG d_n'.
\end{equation*}
\item[(2)] The following hold in $\mathcal{H}$:
\begin{equation*}
d_0' = 1, \quad d_n' \sigma = \sigma d_n', \ n \in \mathbb{N}.
\end{equation*}
\item[(3)] We have 
\begin{equation*}
\Delta(d_n') = \sum_{l + m = n} d_l' \otimes d_m', \quad \varepsilon(d_n') = \delta_{n,0}, \ n \in \mathbb{N}
\end{equation*}
on $\mathcal{H}$. It follows that the $R$-coalgebra $\mathcal{H}$ is cocommutative.
\end{itemize}
\end{proposition}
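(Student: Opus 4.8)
The plan is to treat Parts (1) and (2) by direct bookkeeping, and then to extract Part (3) from the crossed-coproduct description of $\mathcal{H}$ together with the cohomological triviality of $\tau$.

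For Parts (1) and (2): since $\nu(\overline{d}_0)=1$, we get immediately $d_0'=\nu(\overline{d}_0)\,d_0=1$. Each $\nu(\overline{d}_l)$ lies in $KG\subseteq RG$, so $d_n'=d_n+\sum_{l\ge 1}\nu(\overline{d}_l)\,d_{n-l}$ exhibits $(d_n')_{n\in\mathbb{N}}$ as a unitriangular modification of the left $RG$-basis $(d_n)_{n\in\mathbb{N}}$ of $\mathcal{H}=\bigoplus_n RGd_n$; such a transition is invertible over $RG$ (level by level it is a finite unitriangular matrix), whence $(d_n')$ is again a left $RG$-basis and $\mathcal{H}=\bigoplus_n RGd_n'$. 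Next, $\sigma$ commutes with each $d_m=\delta^{(mN)}$ by Proposition \ref{prop:Hopf-alg-H}(c) (as $q^{mN}=1$), and $\sigma$ commutes with every element of $KG$ because $K=\Bbbk(t^N)$ is precisely the subfield of $G$-invariants in $\Bbbk(t)$ (so conjugation by $\sigma$ fixes $K$ pointwise) and $G$ is abelian; hence $\sigma$ commutes with every $d_n'$. Together with $\mathcal{H}=\bigoplus_n RGd_n'$ this yields $\mathcal{H}=\bigoplus_n Rd_n'G$, finishing Parts (1) and (2).

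For Part (3) I would work through the crossed-coproduct isomorphism $\Theta\colon Z\otimes RG\overset{\simeq}{\longrightarrow}\mathcal{H}$ of right $RG$-module $R$-coalgebras recalled above, for which $\Theta(\overline{d}_n\otimes 1)=d_n$; by right $RG$-linearity, and using that each $d_m$ commutes with $G$, one gets $\Theta(\overline{d}_m\otimes\nu(\overline{d}_l))=\nu(\overline{d}_l)\,d_m$, so $\Theta^{-1}(d_n')=\sum_{l+m=n}\overline{d}_m\otimes\nu(\overline{d}_l)$. Since the coaction $\rho$ is trivial, the coproduct on $Z\otimes RG$ has the form $\Delta(z\otimes f)=\sum(z_1\otimes\tau(z_2)_1 f_1)\otimes(z_3\otimes\tau(z_2)_2 f_2)$, with $\Delta_Z$ the divided-power coproduct of Lemma \ref{lem:R-free}. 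Feeding in $\Theta^{-1}(d_n')$, the computation is controlled in each total degree $e$ by the element $\sum_{b+l=e}\tau(\overline{d}_b)\,\Delta_{KG}(\nu(\overline{d}_l))$ of $KG\otimes_K KG$. The substantive point is that, $K$ being $G$-invariant, $KG=K[G]$ is a \emph{commutative} $K$-Hopf algebra, so $K[G]\otimes_K K[G]$ is commutative; passing to generating functions $\widetilde{\tau}(T)=\sum_b\tau(\overline{d}_b)T^b$, $\widetilde{\nu}(T)=\sum_l\nu(\overline{d}_l)T^l$, the coboundary identity $\tau=\partial\nu$ reads $\widetilde{\tau}(T)=(\widetilde{\nu}(T)\otimes 1)\,\Delta_{KG}(\widetilde{\nu}(T))^{-1}\,(1\otimes\widetilde{\nu}(T))$, and multiplying on the right by $\Delta_{KG}(\widetilde{\nu}(T))$ and invoking commutativity collapses it to $\widetilde{\tau}(T)\,\Delta_{KG}(\widetilde{\nu}(T))=(\widetilde{\nu}(T)\otimes 1)(1\otimes\widetilde{\nu}(T))$, i.e.\ $\sum_{b+l=e}\tau(\overline{d}_b)\,\Delta_{KG}(\nu(\overline{d}_l))=\sum_{s+s'=e}\nu(\overline{d}_s)\otimes\nu(\overline{d}_{s'})$ for all $e$.

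Substituting this back and regrouping the four summation indices gives $\Delta_{\mathcal{H}}(d_n')=\sum_{a+s+s'+c=n}\Theta(\overline{d}_a\otimes\nu(\overline{d}_s))\otimes\Theta(\overline{d}_c\otimes\nu(\overline{d}_{s'}))=\sum_{p+r=n}d_p'\otimes d_r'$, while $\varepsilon(d_n')=\delta_{n,0}$ falls out similarly from $\varepsilon\circ\nu=\varepsilon$ and $\varepsilon_H(\delta^{(mN)})=\delta_{m,0}$. For cocommutativity: the displayed coproduct is symmetric in $(l,m)$, and the same computation applied to $d_n'g$ ($g\in G$) gives $\Delta(d_n'g)=\sum_{p+r=n}(d_p'g)\otimes(d_r'g)$, again symmetric; since $\{d_n'g\}$ is a left $R$-basis of $\mathcal{H}$, $\Delta_{\mathcal{H}}$ is left $R$-linear, and for the base-extension $R$-coalgebra structure on $\mathcal{H}=R\#H/I$ the left and right $R$-actions coincide (so scalars move freely across $\otimes_R$), every element of $\mathcal{H}$ has $\Delta$-symmetric image, i.e.\ $\mathcal{H}$ is cocommutative as an $R$-coalgebra. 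The main obstacle I anticipate is organizational — keeping the left-$R$/right-$G$ module conventions in the crossed-coproduct identification consistent; the genuinely substantive step, namely that commutativity of $K[G]\otimes_K K[G]$ makes the $\tau$-contribution cancel against $\Delta_{KG}(\nu)$, is short once isolated.
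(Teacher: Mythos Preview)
Your argument is correct and follows the same route as the paper: Parts~(1)--(2) by direct inspection (unitriangularity over $RG$ and centrality of $K=\Bbbk(t^N)$), and Part~(3) by explicitly carrying out the crossed-coproduct untwisting via $\tau=\partial\nu$, which is precisely what the paper's one-line appeal to ``well-known results on crossed coproducts'' encodes. One phrasing quibble: the left and right $R$-actions on $\mathcal{H}$ do \emph{not} literally coincide, but what you actually use---that scalars pass freely across $\otimes_R$ in the tensor product of left $R$-modules over the commutative ring $R$, so that the swap map is $R$-linear---is correct and suffices for the cocommutativity conclusion.
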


\begin{proof}
The first equation of Part 2 holds since $\nu(\overline{d}_0) =1$, while the second holds
since $\sigma$ commutes with each element in $K$. 
The remaining parts follow from well-known results on crossed coproducts.
\end{proof} 

Recall that $R$ is naturally a left $R \# H$-module; see \eqref{module}. 
Since $I$ annihilates $R$, there is induced a left $\mathcal{H}$-module structure on $R$.

\begin{proposition}\label{prop:induced-structure}
We have the following.
\begin{itemize}
\item[(1)] The induced structure satisfies
\begin{equation*}
\sigma \rightharpoonup x = \sigma_q(x), \quad 
d_n' \rightharpoonup xy = \sum_{l + m = n} (d_l' \rightharpoonup x)(d_m' \rightharpoonup y), \ n \in \mathbb{N}
\end{equation*}
for all $x, y \in R$.
\item[(2)] The following relations hold in $\mathcal{H}$:
\begin{equation*}
\sigma x = (\sigma \rightharpoonup x) \sigma, \quad 
d_n' x = \sum_{l + m = n} (d_l' \rightharpoonup x) \, d_m'
\end{equation*}
for all  $x \in R, n \in \mathbb{N}$.
\end{itemize}
\end{proposition}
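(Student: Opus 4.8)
The plan is to deduce both parts from two identities that already hold in the smash product $R\#H$, and then transport them to the quotient $\mathcal{H}=R\#H/I$ using that $I$ is simultaneously an ideal, a coideal (Lemma~\ref{lem:coideal}(1)), and an annihilator of $R$. First I would record that, writing $\Delta(a)=\sum a_1\otimes a_2$ for the coproduct of the $R$-coalgebra $R\#H$ (the base extension of $H$), one has for all $a\in R\#H$ and $x,y\in R$ the relations $a\rightharpoonup(xy)=\sum(a_1\rightharpoonup x)(a_2\rightharpoonup y)$, $a\rightharpoonup 1=\varepsilon(a)$, and $ax=\sum(a_1\rightharpoonup x)\,a_2$ in $R\#H$. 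For $a=x_0\#h$ these reduce, via $\Delta(x_0\#h)=\sum(x_0\#h_1)\otimes(1\#h_2)$, to the facts that $R$ is an $H$-module algebra (Proposition~\ref{prop:corresp}(i)) and that $R\#H$ is presented by $R$ and $H$ with $hx=\sum(h_1\rightharpoonup x)h_2$; this is a routine unwinding.

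Next I would check that these identities pass to $\mathcal{H}$. The module structure on $R$ descends because $I\rightharpoonup R=0$: since $\xi\rightharpoonup y=0$ for every $y\in R$, we get $a\xi b\rightharpoonup y=a\rightharpoonup\big(\xi\rightharpoonup(b\rightharpoonup y)\big)=0$ for all $a,b\in R\#H$. The coproduct descends because $I$ is a coideal. In the sums above, a term whose first tensor leg lies in $I$ contributes $0$ because $I\rightharpoonup R=0$, and a term whose second leg lies in $I$ contributes $0$ after projecting to $\mathcal{H}$; hence both sides of each identity are well defined on $\mathcal{H}$, and one gets, for every $\bar a\in\mathcal{H}$ and $x,y\in R$, the relations $\bar a\rightharpoonup(xy)=\sum(\bar a_1\rightharpoonup x)(\bar a_2\rightharpoonup y)$ and $\bar a\,x=\sum(\bar a_1\rightharpoonup x)\,\bar a_2$ in $\mathcal{H}$, now with $\Delta$ the coproduct of $\mathcal{H}$.

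It then remains only to specialize. Taking $\bar a=\sigma$, which is grouplike in $\mathcal{H}$, gives $\sigma x=(\sigma\rightharpoonup x)\sigma$, while $\sigma\rightharpoonup x=\sigma_q(x)$ is Proposition~\ref{prop:corresp}(ii). Taking $\bar a=d_n'$ and inserting $\Delta(d_n')=\sum_{l+m=n}d_l'\otimes d_m'$, $\varepsilon(d_n')=\delta_{n,0}$ from Proposition~\ref{prop:cocom}(3) yields at once $d_n'\rightharpoonup(xy)=\sum_{l+m=n}(d_l'\rightharpoonup x)(d_m'\rightharpoonup y)$ and $d_n'x=\sum_{l+m=n}(d_l'\rightharpoonup x)\,d_m'$, which are the remaining assertions of Parts (1) and (2). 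I expect the only delicate point to be the well-definedness step, i.e.\ verifying that the $I$-legs of the coproducts genuinely drop out and that the right-hand sides factor through the $R$-balanced tensor product; this is precisely where the annihilation property $I\rightharpoonup R=0$ and the coideal property of $I$ are jointly used, and once that is granted the rest is formal.
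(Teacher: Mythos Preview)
Your proof is correct and is essentially the approach the paper has in mind; the paper's own proof is the one-liner ``This follows easily from Part 3 of Proposition~\ref{prop:cocom},'' which presupposes exactly the descent you spell out: the module-algebra identity $a\rightharpoonup(xy)=\sum(a_1\rightharpoonup x)(a_2\rightharpoonup y)$ and the smash-product relation $ax=\sum(a_1\rightharpoonup x)a_2$ pass from $R\#H$ to $\mathcal{H}$ because $I$ is simultaneously an ideal, a coideal, and annihilates $R$, and then one plugs in the coproduct formula for $d_n'$. Your caution about the $R$-balancing of the tensor product and the vanishing of the $I$-legs is well placed and correctly handled; these are the only points where anything could go wrong, and they do not.
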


\begin{proof}
This follows easily from Part 3 of Proposition \ref{prop:cocom}.
\end{proof}

\begin{remark}\label{rem:product-in-d'}
By direct computations we see that the following hold in $\mathcal{H}$. 
\begin{itemize}
\item[(1)] For every $n \in \mathbb{N}$,
\begin{equation*}
\bigoplus_{k \leq n} Rd_{k}G = \bigoplus_{k \leq n} R d'_{k}G.
\end{equation*}
\item[(2)] For every $l, m \in \mathbb{N}$, 
\begin{equation*}
d_l' \, d_m' \equiv \binom{l + m}{l} \, d_{l+m}' \ \text{modulo} \bigoplus_{n < l+m} R d_n'G.
\end{equation*}
\end{itemize}
\end{remark}

Now, Lemma \ref{lem:main-example} and
Proposition \ref{prop:cocom}(3) prove the first assertion of the following key result of ours. 

\begin{theorem}\label{thm:H}
$\mathcal{H}$ is a cocommutative $\times_R$-bialgebra, whence the left $\mathcal{H}$-modules
$\mathcal{H}\text{-}\mathrm{Mod} = (\mathcal{H}\text{-}\mathrm{Mod}, \otimes_R, R)$
form a symmetric tensor category. An object in $\mathcal{H}\text{-}\mathrm{Mod}$ has its dual,
if it is finitely generated projective as an $R$-module. 
\end{theorem}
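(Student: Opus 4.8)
The plan is to establish the theorem in two stages, corresponding to its two sentences. For the first sentence, the bulk of the work is already done: Lemma~\ref{lem:main-example} says precisely that a quotient $R\#H/I$ by an ideal-and-coideal $I$ is a $\times_R$-bialgebra as soon as it is left $R$-projective and cocommutative as an $R$-coalgebra. By Lemma~\ref{lem:coideal}(1), $I$ is indeed a coideal, so $\mathcal{H}=R\#H/I$ is an $R$-coalgebra; by Lemma~\ref{lem:coideal}(2) (and \eqref{RG-free}) $\mathcal{H}$ is free, hence projective, as a left $R$-module; and by Proposition~\ref{prop:cocom}(3) it is cocommutative. Thus Lemma~\ref{lem:main-example} applies verbatim and $\mathcal{H}$ is a cocommutative $\times_R$-bialgebra. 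The symmetric monoidal structure on $\mathcal{H}\text{-}\mathrm{Mod}$ is then immediate from Proposition~\ref{prop:tensor}, since cocommutativity is exactly the hypothesis there that makes the tensor category symmetric.

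For the second sentence I would construct the dual of a finitely generated projective $R$-module $M$ in $\mathcal{H}\text{-}\mathrm{Mod}$ in the usual way, as the $R$-linear dual $M^{\vee}=\operatorname{Hom}_R(M,R)$, and equip it with an $\mathcal{H}$-action that makes the evaluation map $M^{\vee}\otimes_R M\to R$ and a coevaluation map $R\to M\otimes_R M^{\vee}$ into $\mathcal{H}$-linear maps satisfying the triangle identities. The key structural ingredient needed here — beyond what a bialgebra gives — is an antipode-like map. I would invoke the fact, standard for $\times_R$-bialgebras and treated in Sweedler~\cite{S3}, that a $\times_R$-Hopf algebra structure (a ``translation map'' $\chi:\mathcal{H}\to\mathcal{H}\otimes_{R^{\mathrm{op}}}\mathcal{H}$ in the sense of Schauenburg, or Sweedler's antipode when it exists) lets one twist the contragredient $R$-module action by $\mathcal{H}$; concretely, on $M^{\vee}$ one sets $(h\rightharpoonup f)(m)=\sum f(S(h_1)\rightharpoonup m)h_2$ where $S$ plays the role of antipode, and one checks this is well defined using condition~(a) of Definition~\ref{def:xR} and the centralizer condition defining $\mathcal{H}\times_R\mathcal{H}$. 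Because our $\mathcal{H}$ is cocommutative and is a quotient of the smash product $R\#H$ with $H$ a pointed Hopf algebra carrying an honest antipode, the required translation map descends from $R\#H$ to $\mathcal{H}$: one transports the antipode $S_H$ of $H$ through the grouplike $\sigma$ and the $d_n'$, using that $\sigma$ is invertible of finite order and that the $d_n'$ are genuine divided powers by Proposition~\ref{prop:cocom}(3), so the antipode is forced by $\sum S(d_l')d_m'=\delta_{n,0}$ recursively.

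The main obstacle, and the place where finite $R$-projectivity of $M$ is used essentially, is verifying that the candidate coevaluation $R\to M\otimes_R M^{\vee}$ actually lands in the correct $\mathcal{H}$-submodule and is $\mathcal{H}$-linear: for a projective $M$ of finite type one has a dual basis $\{(m_i,f_i)\}$ and the coevaluation $1\mapsto\sum_i m_i\otimes f_i$, but one must confirm that the element $\sum_i m_i\otimes f_i\in M\otimes_R M^{\vee}$ is $\mathcal{H}$-invariant in the sense that $h\rightharpoonup\bigl(\sum_i m_i\otimes f_i\bigr)=\varepsilon(h\cdot{-})$-type expression, which rests on the $\times_R$ centralizer identity together with the coalgebra axioms and the defining property of $S$. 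I would organize this by first treating $M=R^n$ (the free case), where everything reduces to matrix computations with entries in $R$ and the claim that $\operatorname{End}_R(R^n)\cong R^n\otimes_R(R^n)^{\vee}$ is $\mathcal{H}$-equivariant, and then passing to a direct summand $M\subset R^n$, noting that duals and the unit object $R$ are stable under retracts in any monoidal category so the rigidity structure on $M$ is inherited from that on $R^n$. The remaining verifications — associativity and unit constraints being $\mathcal{H}$-linear, and the two snake identities — are routine once the actions are in place, and I would simply remark that they follow from the cocommutative $\times_R$-bialgebra axioms exactly as in the classical Hopf-algebra case.
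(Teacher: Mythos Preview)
Your treatment of the first sentence is exactly the paper's argument: Lemma~\ref{lem:main-example} applied with the inputs from Lemma~\ref{lem:coideal} (coideal, $R$-free) and Proposition~\ref{prop:cocom}(3) (cocommutativity), then Proposition~\ref{prop:tensor} for the symmetric tensor structure.

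For the duality assertion your strategy and the paper's share the same germ---use the honest antipode $\mathcal{S}$ of the underlying Hopf algebra $H$---but the implementations differ. You try to \emph{descend an antipode} (or translation map) to $\mathcal{H}$ itself and then write the contragredient action as $(h\rightharpoonup f)(m)=\sum f(S(h_1)\rightharpoonup m)\,h_2$. As written this formula is ill-typed: $f(S(h_1)\rightharpoonup m)\in R$ while $h_2\in\mathcal{H}$, so their product is not in $R$. The paper avoids ever defining an antipode on $\mathcal{H}$. Instead it introduces (Lemma~\ref{lem:for_duality}) the $R$-ring $\mathcal{E}^R\subset\mathcal{H}\otimes_R\mathcal{H}$ of $R$-centralizers, shows that $h\mapsto\sum h_1\otimes\mathcal{S}(h_2)$ gives a well-defined $R$-ring map $\Phi:\mathcal{H}\to\mathcal{E}^R$ (the key step being that the map on $R\#H$ kills the generator $\delta-\tfrac{1}{(q-1)t}(\sigma-1)$ of $I$), and then lets $\mathcal{E}^R$ act on $\operatorname{Hom}_R(M,N)$ by $(\sum a_i\otimes b_i)\rightharpoonup f:m\mapsto\sum a_i\rightharpoonup f(b_i\rightharpoonup m)$. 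The resulting $\mathcal{H}$-action on $M^\vee=\operatorname{Hom}_R(M,R)$ is thus $(h\rightharpoonup f)(m)=\sum h_1\rightharpoonup f(\mathcal{S}(h_2)\rightharpoonup m)$, with the outer $h_1$ acting on $R$; this is presumably what you intended. The advantage of the paper's packaging is that one only has to check an $R$-ring map factors through a quotient, rather than verify the full $\times_R$-Hopf (Schauenburg translation-map) axioms on $\mathcal{H}$.

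Your plan to verify the triangle identities by first treating $M=R^n$ and then passing to retracts is perfectly sound, and in fact more explicit than the paper, which simply declares that the canonical evaluation and coevaluation make $\operatorname{Hom}_R(M,R)$ a dual ``as is easily seen.'' So the only genuine correction needed is to replace your dual-action formula by the one coming from $\Phi$ above (or, if you insist on an antipode on $\mathcal{H}$, to state precisely what structure you are constructing and check it is compatible with the relation $d_n'x=\sum(d_l'\rightharpoonup x)\,d_m'$ in $\mathcal{H}$, which is where the naive antipode-on-generators recursion can go wrong for $\times_R$-bialgebras).
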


Note that the $\mathcal{H}$-module structure on $R$ given by \eqref{eq:alpha} coincides with
the action corresponding to the initially given $\sigma_q$ and iterative $q$-difference operator $\delta^*_R$ on $R$;
see the paragraph following Proposition \ref{prop:corresp}.  

To prove the remaining assertion on duality, we translate the argument by Hardouin \cite[p.119]{H}
into the language of $\times_R$-bialgebras, 
constructing a variation, $\Phi$, of the Ess map in \cite{S3};
see Lemma \ref{lem:for_duality}(3) below. Let $\mathcal{E} = \mathcal{H}\otimes_R \mathcal{H}$ denote the
tensor product of the right $R$-module $\mathcal{H}$ and the left $R$-module $\mathcal{H}$, in which we thus have
$ax \otimes b = a \otimes xb$, where $x \in R$, $a, b \in \mathcal{H}$. The vector space
\[ \mathcal{E}^R = \left\{\ \sum_i a_i \otimes b_i \in \mathcal{E} \biggm| 
\sum_i xa_i \otimes b_i = \sum_i a_i \otimes b_ix,\ \forall x \in R \right\}\ \]
of all $R$-centralizers in $\mathcal{E}$ forms an $R$-ring with respect to the product
\[ \Big(\sum_i a_i \otimes b_i\Big)\Big(\sum_j c_j \otimes d_j\Big) = \sum_{i,j} a_ic_j \otimes d_jb_i \]
and the map $x \mapsto x \otimes 1 \, (=1 \otimes x)$ from $R$.

Recall from \eqref{def_of_mathcalH} the Hopf algebra $H$ and the smash product
$R \# H$ which define $\mathcal{H}$. Let $\mathcal{S}$ denote the antipode of $H$.

\begin{lemma}\label{lem:for_duality}
We have the following.
\begin{itemize}
\item[(1)] Given $h \in H$, the natural image of $\sum(1\# h_1)\otimes (1\# \mathcal{S}(h_2))$ in $\mathcal{E}$
lies in $\mathcal{E}^R$. 
\item[(2)] $h \mapsto \sum(1\# h_1)\otimes (1\# \mathcal{S}(h_2))$ gives rise to an $R$-ring map
$R\# H \to \mathcal{E}^R$. 
\item[(3)] The map just obtained factors through $\mathcal{H}$, so that we have an $R$-ring map,
\[ \Phi : \mathcal{H} \to \mathcal{E}^R. \]
\end{itemize}
\end{lemma}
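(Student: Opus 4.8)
The plan is to prove the three parts in order, each reducing to a direct check that the relevant identities hold; the only subtlety is keeping track of which copy of $R$ acts on which tensor factor and in which direction.

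\textbf{Part (1).} First I would fix $h \in H$ and check that $\omega_h := \sum(1\# h_1)\otimes(1\# \mathcal{S}(h_2))$, viewed in $\mathcal{E} = \mathcal{H}\otimes_R\mathcal{H}$, satisfies the centralizer condition $\sum x\,(1\#h_1)\otimes(1\#\mathcal{S}(h_2)) = \sum (1\#h_1)\otimes(1\#\mathcal{S}(h_2))\,x$ for all $x\in R$. The key input is the antipode identity $\sum \mathcal{S}(h_1)h_2 = \varepsilon(h)1$ together with the smash-product commutation rule $(1\#h)x = \sum(h_1\rightharpoonup x)\#h_2$ in $R\#H$, which after passing to $\mathcal{H}$ still holds. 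One computes, using $\Delta$ twice, that both sides equal $\sum (1\#h_1)\otimes\bigl((\mathcal{S}(h_3)h_2\rightharpoonup x)\#\mathcal{S}(h_4)\bigr)$ type expressions, wait—more carefully, one moves $x$ across using the smash relation and the coalgebra anti-homomorphism property of $\mathcal{S}$, and the middle factors collapse via $\sum h_2\mathcal{S}(h_3)=\varepsilon(h_2)1$ (or its mirror). This is the standard computation showing $h\mapsto\sum h_1\otimes\mathcal{S}(h_2)$ lands in the "left-over" centralizer; I would write it out in $R\#H$ and then note it descends.

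\textbf{Part (2).} Next I would verify that $h\mapsto\omega_h$ is multiplicative with respect to the product on $\mathcal{E}^R$, namely $\bigl(\sum a_i\otimes b_i\bigr)\bigl(\sum c_j\otimes d_j\bigr)=\sum a_ic_j\otimes d_jb_i$, and that it is $R$-linear and unital. Multiplicativity is exactly the computation that $\sum h_1 k_1 \otimes \mathcal{S}(k_2)\mathcal{S}(h_2) = \sum h_1 k_1\otimes\mathcal{S}(h_2 k_2) = \omega_{hk}$, which uses that $\mathcal{S}$ is an algebra anti-homomorphism and that $\Delta$ is an algebra map on $H$; the reversed order of $b_i, d_j$ in the product on $\mathcal{E}^R$ is precisely what makes the antipode's order-reversal come out right. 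Unitality ($\omega_1 = 1\otimes 1$) and the compatibility with the $R$-ring map $x\mapsto x\otimes 1$ (using $\sigma$-type relations and $\mathcal{S}(1)=1$) are immediate. I would also check the map is well defined on $R\#H$, i.e., respects the relations $hx=\sum(h_1\rightharpoonup x)h_2$; this follows from Part (1)'s computation essentially for free, since Part (1) is the $R$-linearity statement on the source side.

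\textbf{Part (3).} Finally, to see the map factors through $\mathcal{H}=R\#H/I$ with $I=(\xi)$, $\xi=\delta-u(\sigma-1)$, I must show $\omega_{\xi}=0$ in $\mathcal{E}^R$, or rather that the image of the ideal $I$ vanishes. Since the map is an $R$-ring homomorphism into $\mathcal{E}^R$ and $\mathcal{H}$ is the quotient of $R\#H$ by the two-sided ideal generated by $\xi$, it suffices to show that the image of $\xi$ in $\mathcal{E}^R$ is zero: then the ideal it generates maps to zero, because the target $\mathcal{E}^R$ receives a compatible $R$-ring structure. Wait—more care is needed: the target here is $\mathcal{E}^R = \mathcal{H}\otimes_R\mathcal{H}$, already built from $\mathcal{H}$, so one must phrase the composite as $R\#H\to\mathcal{H}$ followed by the construction, and check the formula $\sum(1\#h_1)\otimes(1\#\mathcal{S}(h_2))$ makes sense with the images in $\mathcal{H}$. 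Using $\Delta(\xi)=\sigma\otimes\xi+\xi\otimes 1$ from \eqref{Delta-of-xi} and $\mathcal{S}(\xi) = -\sigma^{-1}\xi$ (computed from the antipode axiom applied to $\xi$, since $\xi$ is a $\sigma$-skew primitive), one gets $\omega_\xi = (1\#\sigma)\otimes(1\#\mathcal{S}(\xi)) + (1\#\xi)\otimes 1 = -(1\#\sigma)\otimes(\sigma^{-1}\xi) + \xi\otimes 1$; but $\xi$ maps to $0$ in $\mathcal{H}$, so both terms vanish. The main obstacle, and the step I would be most careful about, is bookkeeping in Part (1)/(2): the definitions of $\mathcal{A}\times_R\mathcal{A}$, of $\mathcal{E}^R$, and of the two products involved use opposite-handed $R$-actions and an order-reversal, so one must track scrupulously whether a given $R$-action is by left or right multiplication and whether the antipode's anti-multiplicativity compensates correctly; once the conventions are pinned down, each verification is a short Sweedler-notation calculation.
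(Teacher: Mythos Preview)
Your outlines for Parts~(2) and~(3) are essentially correct and match the paper's approach (the paper's Part~(3) is even terser: ``one sees that the $R$-ring map annihilates $\delta - \frac{1}{(q-1)t}(\sigma-1)$'').

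The genuine gap is in Part~(1). You describe the centralizer check as a ``standard computation'' in which ``the middle factors collapse via $\sum h_2\mathcal{S}(h_3) = \varepsilon(h_2)1$.'' But if you use only the smash-product relation $hx = \sum(h_1\rightharpoonup x)h_2$ valid in $R\#H$, together with the anti-coalgebra property of $\mathcal{S}$, pushing $x$ to the left yields
\[
\sum h_1\otimes\mathcal{S}(h_2)\,x \;=\; \sum\bigl((h_1\mathcal{S}(h_4))\rightharpoonup x\bigr)\,h_2\otimes\mathcal{S}(h_3),
\]
and the factor $h_1\mathcal{S}(h_4)$ does \emph{not} collapse: the Sweedler indices are not adjacent, and $H$ is neither commutative nor cocommutative (Remark~\ref{rem:grading}). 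The paper's proof resolves this by invoking the cocommutativity of $\mathcal{H}$ as an $R$-coalgebra (Proposition~\ref{prop:cocom}(3)), which gives the swapped relation $hx = \sum(h_2\rightharpoonup x)h_1$ \emph{in $\mathcal{H}$} (not in $R\#H$). Using this swapped relation twice, the computation instead produces $h_2\mathcal{S}(h_3)$, which does collapse. Since the cocommutativity of $\mathcal{H}$ is a nontrivial result---obtained earlier via a crossed-coproduct decomposition and a dual Sweedler-cohomology vanishing argument---you cannot treat Part~(1) as the familiar Hopf-algebra identity; you must explicitly invoke that $\mathcal{H}$ is cocommutative.

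A minor point on Part~(3): $\xi = \delta - u(\sigma-1)$ lies in $R\#H$, not in $H$, so ``$\mathcal{S}(\xi)$'' is not literally defined. One should instead compute the image of $\xi = (1\#\delta) - (u\#\sigma) + (u\#1)$ termwise under the $R$-ring map and check the cancellation in $\mathcal{E}$ using $\delta = u(\sigma-1)$ in $\mathcal{H}$; your conclusion is nonetheless correct.
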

\begin{proof}
Let $h \in H$, $x \in R$. We write $h$ for $1\# h$, as before. Set $\varphi(h)= 
\sum h_1 \otimes \mathcal{S}(h_2)$ in $\mathcal{E}$. 

(1) By the cocommutativity of $\mathcal{H}$, we have 
$hx = \sum (h_2\rightharpoonup x)h_1$ in $\mathcal{H}$. Using this twice we have
\[ \varphi(h) x= \sum h_1(\mathcal{S}(h_2)\rightharpoonup x) \otimes S(h_3) 
= x \varphi(h)  \]
in $\mathcal{E}$, which proves Part 1. 

(2) One sees that $h \mapsto \varphi(h)$ gives an algebra map $H \to \mathcal{E}^R$. This extends to 
an $R$-ring map from $R \# H$, since we have
$\varphi(h)(x \otimes 1)=\sum ((h_1\rightharpoonup x) \otimes 1) \varphi(h_2)$ in $\mathcal{E}^R$. 

(3) Indeed, one sees that the $R$-ring map annihilates $\delta - \frac{1}{(q-1)t}(\sigma -1)$. 
\end{proof}

\begin{proof}[Proof of Theorem \ref{thm:H}]
Let $M, N \in \mathcal{H}\text{-}\mathrm{Mod}$. The $R$-module $\operatorname{Hom}_R(M,N)$
of all $R$-linear maps $M \to N$ turns into a left $\mathcal{E}^R$-module by defining 
\[ \Big( \sum_i a_i \otimes b_i\Big) 
\rightharpoonup f : m \mapsto \sum_i a_i\rightharpoonup f(b_i \rightharpoonup m), \]
where $\sum_i \, a_i \otimes b_i\in \mathcal{E}^R$, $f \in \operatorname{Hom}_R(M,N)$. This turns, moreover, into a
left $\mathcal{H}$-module through the $R$-ring map $\Phi$. Suppose that $M$ is finitely generated projective
as an $R$-module and $N = R$. Then the left $\mathcal{H}$-module $\operatorname{Hom}_R(M,R)$
together with the canonical evaluation and co-evaluation maps give a dual object of $M$, as is easily seen. 
\end{proof}

Hardouin \cite[Definition 3.1]{H} defines the notion of \emph{iterative q-difference modules}.

\begin{proposition}\label{prop:iterative-q-difference-module}
An \emph{iterative} $q$-\emph{difference module} over $R$, as defined by \cite[Definition 3.1]{H}, 
is precisely
such a left $\mathcal{H}$-module that is finitely generated free as an $R$-module. 
An extension $S \supset R$ of iterative $q$-difference rings is precisely a commutative algebra
object $S$ in $\mathcal{H}\text{-}\mathrm{Mod}$ such that the canonical map $R \to S$ is injective.
\end{proposition}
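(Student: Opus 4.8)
The plan is to unwind both definitions and match them term by term, using the explicit description of $\mathcal{H}$ from Proposition \ref{prop:cocom} and the action formulas from Proposition \ref{prop:induced-structure}. First I would recall Hardouin's \cite[Definition 3.1]{H}: an iterative $q$-difference module over $R$ is a finitely generated free $R$-module $M$ equipped with an automorphism $\Sigma_q : M \to M$ that is $\sigma_q$-semilinear (i.e.\ $\Sigma_q(xm) = \sigma_q(x)\,\Sigma_q(m)$), together with a sequence of additive operators $\partial^{(k)} : M \to M$ satisfying the same list of axioms as in Definition \ref{def:iter-dif-op} but in the twisted-module sense, namely $\partial^{(0)} = \mathrm{id}_M$, $\partial^{(1)} = \frac{1}{(q-1)t}(\Sigma_q - \mathrm{id}_M)$, the Leibniz-type rule $\partial^{(k)}(xm) = \sum_{i+j=k} \sigma_q^i\circ\delta_R^{(j)}(x)\,\partial^{(i)}(m)$, and the iteration rule $\partial^{(i)}\circ\partial^{(j)} = \binom{i+j}{i}_q \partial^{(i+j)}$. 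The first task is to show these data are exactly an $H$-module structure $\rightharpoonup : H \otimes M \to M$ extending the $R$-module structure and satisfying $\sigma \rightharpoonup m = \Sigma_q(m)$, $\delta^{(k)} \rightharpoonup m = \partial^{(k)}(m)$, compatibly with the $H$-action on $R$; this is the exact module-object analogue of Proposition \ref{prop:corresp}, proved the same way from the presentation of $H$ in Proposition \ref{prop:Hopf-alg-H}.

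Next I would show that such an $H$-module structure on $M$ descends to an $\mathcal{H} = R\#H/I$-module structure precisely when $\delta \rightharpoonup m = \frac{1}{(q-1)t}(\sigma \rightharpoonup m - m)$ for all $m$, i.e.\ precisely when the generator $\xi = \delta - u(\sigma-1)$ of $I$ annihilates $M$ — and this is Hardouin's defining relation $\partial^{(1)} = \frac{1}{(q-1)t}(\Sigma_q - \mathrm{id}_M)$. Since $M$ is already an $R\#H$-module via the $H$-action and the $R$-multiplication, and $I$ is the two-sided ideal generated by $\xi$, annihilation of $\xi$ suffices for $I$ to annihilate $M$ by the relations \eqref{relations} exactly as in the proof of Lemma \ref{lem:coideal}. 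Conversely, given a left $\mathcal{H}$-module $M$ that is finitely generated free over $R$, restricting along $RG \hookrightarrow R\#H \to \mathcal{H}$ and along the images of the $\delta^{(k)}$ recovers $\Sigma_q$ and the $\partial^{(k)}$, and Proposition \ref{prop:induced-structure} (or rather its module-object version) guarantees the twisted Leibniz rule, while the algebra relations \eqref{alg-struc} in $H$, which survive in $\mathcal{H}$, give the iteration rule. So the two notions coincide.

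For the second sentence: an extension $S \supset R$ of iterative $q$-difference rings means, by Definition \ref{def:iter-dif-op}, a commutative $R$-algebra $S$ carrying its own $\sigma_q^S$ and $\delta_S^{*}$ restricting to those of $R$. By the correspondence of Proposition \ref{prop:corresp} applied to $S$, this is an $H$-module algebra structure on $S$ satisfying (i)--(iii), hence — as $\xi$ annihilates $S$ by Condition (iii) — an $\mathcal{H}$-module algebra structure, i.e.\ $S$ is an algebra in the tensor category $(\mathcal{H}\text{-}\mathrm{Mod},\otimes_R,R)$ of Proposition \ref{prop:tensor}; the compatibility $d\rightharpoonup(xs) = \sum(d_1\rightharpoonup x)(d_2\rightharpoonup s)$ for $x\in R$ is precisely the statement that the structure map $R\to S$ is an $\mathcal{H}$-module map, which one checks means $R\to S$ is a morphism in $\mathcal{H}\text{-}\mathrm{Mod}$ of the unit object into $S$; requiring this map to be injective is exactly the requirement that $S$ contain $R$. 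Commutativity of $S$ as a ring translates to commutativity of the algebra object (the symmetry being the obvious one, available since $\mathcal{H}$ is cocommutative by Proposition \ref{prop:cocom}(3)). The main obstacle, and the only genuinely non-formal point, is verifying that the twisted Leibniz axiom (Condition (4) of Definition \ref{def:iter-dif-op}, in its module form) is equivalent to the comultiplicativity of the $\mathcal{H}$-action — but this is handled uniformly by the coproduct formula $\Delta(d_n') = \sum_{l+m=n} d_l'\otimes d_m'$ of Proposition \ref{prop:cocom}(3) together with $\Delta(\sigma) = \sigma\otimes\sigma$, so that after the coordinate change from $d_n$ to $d_n'$ the Leibniz rule literally becomes the defining compatibility of a module over a $\times_R$-bialgebra; everything else is bookkeeping.
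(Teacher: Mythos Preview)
Your outline is correct and follows the same route as the paper. The paper's own proof is one line (``This is easy to see''), but it flags exactly one verification that your phrase ``proved the same way'' leaves implicit: that $\Sigma_q^N = \mathrm{id}_M$ on any iterative $q$-difference module. This is needed because $H$ contains the group algebra of $G = \langle \sigma \mid \sigma^N = 1\rangle$, yet Hardouin's module axioms do not postulate it; it must be derived from $(\partial^{(1)})^N = 0$ exactly as in the proof of Lemma \ref{lem:relations}(a). By contrast, what you call the ``only genuinely non-formal point'' --- matching the twisted Leibniz rule to comultiplicativity --- is in fact purely formal once the coproduct formula of Proposition \ref{prop:cocom}(3) (equivalently, the $R\#H$ coproduct in Proposition \ref{prop:Hopf-alg-H}(d)) is in hand; the $\sigma^N$-check is where the actual content lies.
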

\begin{proof}
This is easy to see. We only remark that $\sigma_q^N$ acts on any iterative $q$-difference module as zero,
as is seen just as proving Lemma \ref{lem:relations}(a). 
\end{proof}

\begin{remark}\label{rem:preferable_definition}
Note that $C$ plays a very minor role, which we may, and we do in this remark, assume to be $\Bbbk$. 
Notice from Corollary \ref{cor:stable} that $\Bbbk(t)$ is uniquely an iterative $q$-difference ring. 
We denote the associated cocommutative $\times_{\Bbbk(t)}$-bialgebra by $\mathcal{H}_{\Bbbk(t)}$. 
The authors prefer to define an iterative $q$-difference ring to be a commutative algebra object
in $\mathcal{H}_{\Bbbk(t)}$-$\operatorname{Mod}$, though the original definition requires in addition the
object to include $C(t)$. 
\end{remark}

\subsection{}\label{subsec:injective_representation}
We return to the situation before the last remark. 
Recall that the iterative $q$-difference ring $R$ is naturally a left $\mathcal{H}$-module.

\begin{proposition}\label{prop:injective_representation}
The corresponding representation $\alpha : \mathcal{H} \to \operatorname{End}(R)$ is an injection,
and its image is generated by $\sigma_q$, $\delta^{(k)}_R$, $k \in \mathbb{N}$, and the multiplications
by all elements in $R$. 
\end{proposition}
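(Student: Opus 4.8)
The statement has two parts: injectivity of $\alpha$, and the identification of its image. For injectivity, the plan is to exploit the explicit $RG$-basis description of $\mathcal{H}$. By \eqref{RG-free} (or equivalently Proposition \ref{prop:cocom}(1)), every element of $\mathcal{H}$ can be written uniquely as $\sum_{n} r_n d_n' \sigma^{e_n}$ with suitable coefficients; more conveniently, using $\mathcal{H}=\bigoplus_n R d_n G$, an element is $h = \sum_{n,g} r_{n,g}\, d_n g$ with $r_{n,g}\in R$. Suppose $\alpha(h)=0$, i.e. $h\rightharpoonup x = 0$ for all $x\in R$. Since $d_n\rightharpoonup$ acts as $\delta^{(n N)}_R$ (the $d_n=\delta^{(nN)}$), the coradical filtration $\mathcal{H}_n=\bigoplus_{k\le n}Rd_kG$ of \eqref{coradical_filtration} lets us argue by induction on the filtration degree: test $h$ against $x=t^{mN}$ and use the formula \eqref{def_of_delta}, namely $\delta^{(kN)}_R(t^{mN})=\binom{m}{k}t^{(m-k)N}$ together with $\sigma_q(t)=qt$, to isolate the top coefficients. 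The essential input is that $\Bbbk(t)\subset R$ and $\delta^{(kN)}_R(t^{mN})\ne 0$ for $m\ge k$, so the operators $\delta^{(0)}_R,\delta^{(N)}_R,\delta^{(2N)}_R,\dots$ are $R$-linearly independent in $\operatorname{End}(R)$; combined with the fact that the powers $\sigma_q^e$, $0\le e<N$, are $R$-linearly independent (Artin's lemma on independence of characters, since $\sigma_q$ has order exactly $N$ by Lemma \ref{lem:relations}(a)), one concludes that the family $\{d_n\sigma^e\}_{n\in\mathbb N,\,0\le e<N}$ maps to an $R$-linearly independent family in $\operatorname{End}(R)$, hence $\alpha$ is injective.

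**The image.** Let $\mathcal{A}\subset\operatorname{End}(R)$ denote the subalgebra generated by $\sigma_q$, all $\delta^{(k)}_R$ ($k\in\mathbb N$), and all multiplications $\lambda_x$ by $x\in R$. First I would check $\operatorname{Im}\alpha\subseteq\mathcal A$: by Proposition \ref{prop:induced-structure}(2) the relation $\sigma x = (\sigma\rightharpoonup x)\sigma$ holds in $\mathcal H$, i.e. $\alpha(\sigma)\alpha(x)=\alpha(\sigma_q(x))\alpha(\sigma)$, so $\alpha(\sigma)=\sigma_q$ as an operator (it is the algebra map $\Bbbk G\to\operatorname{End}(R)$ sending $\sigma\mapsto\sigma_q$); and $\alpha(d_n)=\alpha(\delta^{(nN)})=\delta^{(nN)}_R$. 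Since $\mathcal H=\bigoplus_n RG d_n$ as an algebra generated over $RG$ by the $d_n$, and $\alpha$ restricted to $R$ is $\lambda$, it follows that $\operatorname{Im}\alpha$ is exactly the subalgebra generated by $\lambda(R)$, $\sigma_q$ and the $\delta^{(nN)}_R$. For the reverse inclusion I must also produce the low-degree operators $\delta^{(1)}_R,\dots,\delta^{(N-1)}_R$: but by \eqref{eq:kth-power}-type relations valid in this root-of-unity setting, one has $\delta^{(i)}_R=[i]_q^{-1}(\delta^{(1)}_R)^i$ for $0\le i<N$ and $\delta^{(1)}_R=\frac{1}{(q-1)t}(\sigma_q-\mathrm{id})=\lambda_u(\sigma_q-\mathrm{id})$ with $u=\frac{1}{(q-1)t}\in\Bbbk(t)\subset R$ — hence $\delta^{(1)}_R\in\mathcal A$ automatically, and more generally every $\delta^{(k)}_R$ with $k=nN+i$ is, by the argument in Section \ref{subsec:construction_of_H}, a nonzero scalar multiple of $(\delta^{(1)}_R)^i\,\delta^{(nN)}_R = (\delta^{(1)}_R)^i d_n$, so nothing beyond $\sigma_q$, $\lambda(R)$ and the $d_n=\delta^{(nN)}_R$ is needed. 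Thus $\operatorname{Im}\alpha=\mathcal A$.

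**Main obstacle.** The routine-looking part — that $\operatorname{Im}\alpha$ is generated by the listed operators — is indeed immediate from Propositions \ref{prop:induced-structure} and \ref{prop:cocom}. The real work is the injectivity of $\alpha$, and within that the crux is establishing $R$-linear independence in $\operatorname{End}(R)$ of the operators $\{\,\sigma_q^e\,\delta^{(nN)}_R : 0\le e<N,\ n\in\mathbb N\,\}$. The separation of the $\sigma_q^e$ for fixed $n$ is handled by Dedekind/Artin independence of the automorphisms $\mathrm{id},\sigma_q,\dots,\sigma_q^{N-1}$ of the field(s) making up $R$ — one must be slightly careful because $R$ need only be a product of fields, not a field, but $C(t)\subseteq R$ and $\sigma_q$ permutes things compatibly, and applying everything to elements of $\Bbbk(t)$ suffices. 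The separation across different $n$ uses the filtration: if a relation $\sum_{n\le m}\sum_e r_{n,e}\sigma_q^e\delta^{(nN)}_R=0$ held with $r_{m,e}$ not all zero, apply it to $t^{mN}$ — every term with $n<m$ maps $t^{mN}$ into the $R$-span of $t^{(m-1)N},t^{(m-2)N},\dots$ shifted by powers of $q$, whereas the $n=m$ terms contribute $\binom{m}{m}\sum_e r_{m,e}q^{0}\cdot(\text{unit})=\sum_e r_{m,e}\cdot 1$; applying instead to $t^{mN+1}, t^{mN+2},\dots$ and exploiting $\sigma_q(t^j)=q^j t^j$ lets one peel off each $r_{m,e}$ via a Vandermonde argument in the $q^e$ (distinct since $q$ has order $N$), forcing all $r_{m,e}=0$, a contradiction. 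Assembling these pieces cleanly — in particular getting the bookkeeping of the filtration and the Vandermonde/character-independence steps to interlock — is where the genuine effort lies; everything else is formal.
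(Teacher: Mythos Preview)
Your treatment of the image is correct though more elaborate than needed: since $\mathcal{H}$ is generated as an $R$-ring by (the images of) $\sigma$ and the $\delta^{(k)}$, its image under $\alpha$ is generated by $\sigma_q$, the $\delta^{(k)}_R$, and the multiplications by $R$; conversely each of these is visibly $\alpha$ of something, so the detour through the $d_n$ is unnecessary.

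For injectivity your approach is genuinely different from the paper's. The paper first reduces to $R=\Bbbk(t)$ via $\mathcal{H}=R\otimes_{\Bbbk(t)}\mathcal{H}_{\Bbbk(t)}$, then argues coalgebraically: $\alpha$ is an $R$-coalgebra map into the cocommutative $\times_R$-bialgebra $(R^e)^{\circ}$ of Example~\ref{ex:End}, and by \cite[Theorem~5.3.1]{Mo} it is injective once its restriction to the first coradical term $\mathcal{H}_1$ is; this boils down to the two easy facts $\delta^{(N)}_R\notin\sum_i R\sigma_q^i$ and $\sigma_q^i\ne\sigma_q^j$ for $i\ne j$. Your direct evaluation on powers of $t$ is more elementary and avoids this machinery.

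However, your filtration step has a genuine gap. You say that applying the relation to $t^{mN}$ puts the $n<m$ terms into ``the $R$-span of $t^{(m-1)N},t^{(m-2)N},\dots$'' while the $n=m$ term contributes $\sum_e r_{m,e}$, as if these could be separated. But the coefficients $r_{n,e}$ lie in $R$ (which, incidentally, is \emph{not} assumed to be a product of fields: it is an arbitrary commutative ring containing $C(t)$), so the $R$-span of any unit is all of $R$ and there is no $t$-grading to project onto. Even over $\Bbbk(t)$ a single evaluation at $t^{mN}$ cannot isolate the top coefficients: for instance $r_{m,0}=t^N$ can cancel against a suitable $r_{m-1,0}$. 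Downward induction on the top index $m$ does not work as stated.

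The easy fix is to run the induction upward. Since $\delta^{(nN)}_R$ annihilates $t^j$ for $j<nN$, applying the relation to $t^i$ with $0\le i<N$ kills every term with $n\ge 1$ and leaves $\sum_e r_{0,e}\,q^{ei}\,t^i=0$; as $t^i$ is a unit and the Vandermonde determinant $\prod_{e<e'}(q^{e'}-q^e)$ is a unit of $\Bbbk\subset R$, this forces all $r_{0,e}=0$. Next apply to $t^{N+i}$: only $n\le 1$ contributes, and with $r_{0,e}=0$ already known one obtains $r_{1,e}=0$ (here $\binom{N+i}{N}_q=1$). Iterating gives $r_{n,e}=0$ for all $n$, directly over arbitrary $R$ and without any reduction step. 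With this correction your elementary argument goes through.
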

\begin{proof}
The assertion on the image follows since the $R$-ring $\mathcal{H}$ is generated by $\sigma$, $\delta^{(k)}$, 
$k \in \mathbb{N}$.

To prove the injectivity of $\alpha$, 
we use $\mathcal{H}_{\Bbbk(t)}$ given in Remark \ref{rem:preferable_definition}. We see from
\eqref{RG-free} that $\mathcal{H} = R\otimes_{\Bbbk(t)} \mathcal{H}_{\Bbbk(t)}$.  
Moreover, the composite $\mathcal{H} \to \operatorname{End}(R) \to \operatorname{Hom}(k(t), R)$
of $\alpha$ with the restriction map coincides with the composite
$R\otimes_{\Bbbk(t)} \mathcal{H}_{\Bbbk(t)} \to R\otimes_{\Bbbk(t)} \operatorname{End}(\Bbbk(t))
\hookrightarrow \operatorname{Hom}(k(t), R)$ of the base extension of the representation
$\alpha_{\Bbbk(t)} : \mathcal{H}_{\Bbbk(t)} \to \operatorname{End}(\Bbbk(t))$ with the natural embedding. 
It follows that the desired injectivity will follows from the injectivity of $\alpha_{\Bbbk(t)}$. 
Therefore, by replacing $R$ with $\Bbbk(t)$, we may suppose that $R$ is a field. In that case we have
the cocommutative $\times_R$-bialgebra $(R^e)^{\circ}\, (\subset \operatorname{End}(R))$ given in Example
\ref{ex:End}. We see from \cite[Proposition 6.0.3]{S2} that $\sigma_q$ and
all $\delta^{(k)}_R$ are contained in $(R^e)^{\circ}$. Moreover, $\alpha$ gives an $R$-coalgebra
map $\mathcal{H} \to (R^e)^{\circ}$; this is indeed a $\times_R$-bialgebra map. 

Notice from Proposition \ref{prop:cocom} that the 
first term $\mathcal{H}_1$ of the coradical filtration \cite[p.185]{S2} on $\mathcal{H}$ is given by 
\begin{equation*}
\mathcal{H}_1 = RG \oplus Rd'_1G = \bigoplus_{0\le i < N}\mathcal{H}_1^{(i)}, 
\end{equation*}
where we have set $\mathcal{H}_1^{(i)} = R\sigma^i \oplus R d'_1\sigma^i$, $0\le i <N$; these 
are $R$-subcoalgebras of
$\mathcal{H}_1$.  (Moreover, by Remark \ref{rem:product-in-d'}(1), the $n$-th term coincides with the $\mathcal{H}_n$
given by \eqref{coradical_filtration}.)
By \cite[Theorem 5.3.1]{Mo}, the desired injectivity will follow
if we prove that the restriction $\alpha|_{\mathcal{H}_1} : \mathcal{H}_1 \to (R^e)^{\circ}$ is injective. 
This last injectivity is equivalent to 
\begin{itemize}
\item[(i)] for every $0\le i <N$, $\alpha|_{\mathcal{H}_1^{(i)}}$ is injective, and
\item[(ii)] if $i\ne j$, then $\alpha(\mathcal{H}_1^{(i)}) \cap \alpha(\mathcal{H}_1^{(j)}) = 0$.
\end{itemize}

To prove (i), we may suppose $i=0$, since the result in $i=0$ implies the result in the remaining cases,
as is easily seen. Since $\alpha(d'_1)$ is primitive, the desired result is equivalent to $\alpha(d'_1)\ne 0$;
this will follow by definition of $d'_1$, if one sees
$(\alpha(d_1)=)\, \delta^{(N)}_R \notin \sum_{0\le i <N} R\sigma_q^i$. 
On the contrary, suppose $\delta^{(N)}_R \in \sum_{0\le i <N} R\sigma_q^i$. Then 
$\delta^{(N)}_R(t^N) = \delta^{(N)}_R(1)t^N$, which contradicts \eqref{def_of_delta}. 

We see that (ii) holds, since if $i\ne j$, the coradicals $R\sigma_q^i$, $R\sigma_q^j$ 
of $\alpha(\mathcal{H}_1^{(i)})$, 
$\alpha(\mathcal{H}_1^{(j)})$ trivially intersect, or $\sigma_q^i\ne \sigma_q^j$.  
\end{proof}

\section{How cocommutative pointed Hopf algebras come in}\label{sec:proof_of_claim}

The main objective of this paper is to show the following. 

\begin{claim}\label{claim}
The main theorems of Hardouin \cite{H}, given below (with notation partially changed), follow from our 
results in Section \ref{sec:review} that are reproduced from \cite{AM}.
\end{claim}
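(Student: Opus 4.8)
The plan is to prove Claim \ref{claim} by establishing a dictionary between Hardouin's iterative $q$-difference setting and the AS $D$-module algebra framework of \cite{AM}, with the cocommutative $\times_R$-bialgebra $\mathcal{H}$ of Theorem \ref{thm:H} serving as the pivot. Concretely, I would first treat the two characteristic cases separately, since the required Hopf algebra $D$ differs. In characteristic zero, $q$ is transcendental-or-algebraic over $\mathbb{Q}$ and never a root of unity unless $C$ has positive characteristic; but under the standing assumption that $q$ is a root of unity of order $N$, one is forced into $\operatorname{char}\Bbbk = p > 0$ with $p \nmid N$. In that case, I expect the right choice of $D$ to be built from the Hopf algebra $H$ of Subsection \ref{subsec:construction_of_H}: one wants a \emph{cocommutative} pointed Hopf algebra $D$ such that $\mathcal{H}$-modules that are finitely generated free over $R$ embed (faithfully, and as a full tensor subcategory) into the category of $R\#D$-modules of finite $R$-free rank. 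The natural candidate is $D = \Bbbk G \# D^1$, where $G = \langle\sigma\rangle$ contributes the difference operator $\sigma_q$ (with $\sigma^N=1$) and $D^1$ is the Birkhoff–Witt irreducible Hopf algebra on an $\infty$-divided-power sequence $d_0=1, d_1, d_2,\dots$ realizing the $\delta^{(kN)}_R$; the relation $\delta^{(1)}_R = \tfrac{1}{(q-1)t}(\sigma_q-1)$ is exactly what the quotient $I$ in \eqref{def_of_mathcalH} encodes, so $\mathcal{H} = R\#H/I$ records precisely the part of $R\#D$ that acts nontrivially.

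The key steps, in order, would be: (1) Using Proposition \ref{prop:injective_representation}, identify $\mathcal{H}$ with its faithful image in $\operatorname{End}(R)$, generated by $\sigma_q$, the $\delta^{(k)}_R$, and left multiplications; this lets me compare $\mathcal{H}$-actions with $R\#D$-actions inside a common ambient algebra. (2) Verify that $R$ is an AS $D$-module algebra: since $R\supset C(t)$ is (after passing to the total quotient ring, or under Hardouin's hypotheses) a field or a finite product of fields on which $G$ acts, and the operators are the standard ones, this is where \cite[Corollary 2.5, Lemma 2.8]{AM} plug in. (3) Translate Hardouin's Picard–Vessiot extensions for an iterative $q$-difference module $V$ into minimal splitting algebras for the corresponding $R\#D$-module, invoking Theorem \ref{thm:charac}; one must check that the $D$-invariants $R^D$ coincide with Hardouin's ring of constants, and that minimality matches her notion of the Picard–Vessiot ring being generated by the entries of a fundamental solution matrix and the inverse of its determinant. (4) Read off Theorems \ref{thm:HardouinThm4.7}–\ref{thm:HardouinThm4.20} from Theorem \ref{thm:Gal_corresp}, Remark \ref{rem:inverse}, Lemma \ref{lem:finite_generation}, Theorem \ref{thm:unique_exist}, and Proposition \ref{prop:separable}: the Galois correspondence, the recovery of intermediate rings from Hopf ideals, the normality–quotient statement, the algebraicity criterion, unique existence over an algebraically closed field of constants, and the separability/reducedness equivalences respectively.

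The main obstacle, as I see it, is step (1)–(3) glue: showing that $\mathcal{H}$-$\operatorname{Mod}$ (restricted to objects finitely generated free over $R$) embeds as a tensor-compatible subcategory of $(R\#D)$-$\operatorname{Mod}$ in positive characteristic. The subtlety is that $D$ is genuinely larger than $\mathcal{H}$ (the operators $\delta^{(i)}_R$ for $0<i<N$ with $i$ not a multiple of $N$ are, inside $\mathcal{H}$, multiples of powers of $\delta = \delta^{(1)}_R$, hence \emph{not} acting independently, whereas in the Birkhoff–Witt $D^1$ the divided powers are formally independent), so not every $R\#D$-module comes from $\mathcal{H}$; I must pin down the essential image and confirm it is closed under the operations (tensor products, duals) that Hardouin uses — which is exactly why Theorem \ref{thm:H} was proved with its tensor structure and duality. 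In characteristic zero (the degenerate case where one drops the root-of-unity hypothesis, via \eqref{eq:kth-power}), the situation is easier: $\mathcal{H}$-modules \emph{are} $R\#D$-modules for $D = \Bbbk G$ alone, essentially the inversive difference case of \cite{PS1}, and the identification is an equivalence rather than merely an embedding. I would therefore organize the proof to dispatch that case quickly and concentrate the technical work on the modular case, with Proposition \ref{prop:iterative-q-difference-module} providing the bridge on objects and the $\times_R$-bialgebra maps of Lemma \ref{lem:for_duality} providing it on morphisms and duals.
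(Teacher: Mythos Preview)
Your case split is built on a false premise. You write that ``under the standing assumption that $q$ is a root of unity of order $N$, one is forced into $\operatorname{char}\Bbbk = p > 0$,'' but this is simply wrong: a primitive $N$-th root of unity exists in any field containing $\mathbb{Q}(\zeta_N)$, so the paper's characteristic-zero case is genuine and nontrivial. What you dismiss as the ``easy'' characteristic-zero case---dropping the root-of-unity hypothesis and taking $D = \Bbbk G$---is precisely the case the paper discards before Section~\ref{sect:iter-dif-ring} as already covered by classical difference Picard--Vessiot theory; it is not part of Claim~\ref{claim} at all. The actual characteristic-zero argument takes $D = \Bbbk[d_1'] \otimes \Bbbk G$ with $d_1'$ primitive, and the crucial point (Proposition~\ref{prop:isom}) is that $R \# D \to \mathcal{H}$ is an \emph{isomorphism} of $\times_R$-bialgebras, so $\mathcal{H}\text{-}\mathrm{Mod}$ and $R\#D\text{-}\mathrm{Mod}$ coincide on the nose and the translation to \cite{AM} is immediate. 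You have no analogue of this step.

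In positive characteristic your instincts are closer, but the mechanism is different from what you sketch. You propose to compare $\mathcal{H}$ and $R\#D$ inside $\operatorname{End}(R)$ via Proposition~\ref{prop:injective_representation}; the paper instead builds an explicit \emph{surjection} $R\#D \to \mathcal{H}$ (Proposition~\ref{prop:surj}), which directly exhibits $\mathcal{H}\text{-}\mathrm{Mod}$ as a full tensor subcategory of $R\#D\text{-}\mathrm{Mod}$. Your choice of $D^1$ as the Hopf algebra on a single divided-power sequence is also problematic: by Remark~\ref{rem:product-in-d'}(2), the elements $d_n'$ in $\mathcal{H}$ satisfy $d_l' d_m' \equiv \binom{l+m}{l} d_{l+m}'$ only modulo lower-order terms in $RG$, so the divided-power relations do not hold exactly and there is no algebra map from your $D^1$ to $\mathcal{H}$. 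The paper sidesteps this by taking $D^1 = T(Z_\Bbbk)/(d_0'-1)$, the \emph{free} (tensor) algebra on the $d_n'$, which maps to anything. Finally, the one genuinely delicate point in positive characteristic---that the minimal splitting algebra built in $R\#D\text{-}\mathrm{Mod}$ actually lies in $\mathcal{H}\text{-}\mathrm{Mod}$---is handled not by abstract closure properties but by the concrete localization argument of Proposition~\ref{prop:localization}, which you do not mention.
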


\begin{theorem}[Hardouin \cite{H}, Theorem 4.7]\label{thm:HardouinThm4.7}
Let $K$ be an iterative $q$-difference field such that the field $C(K)$ of constants 
in $K$ is algebraically closed. Let $V$ be an iterative $q$-difference module over $K$. 
Then there exists an iterative $q$-difference Picard-Vessiot 
ring for $V$, which is unique up to isomorphism of iterative $q$-difference rings. 
\end{theorem}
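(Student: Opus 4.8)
The plan is to derive Theorem \ref{thm:HardouinThm4.7} by translating the statement into the language of AS $D$-module algebras and then invoking Theorems \ref{thm:unique_exist} and \ref{thm:charac}. First I would observe, via Proposition \ref{prop:iterative-q-difference-module}, that an iterative $q$-difference module $V$ over $K$ is exactly a left $\mathcal{H}$-module which is finitely generated free as a $K$-module, where $\mathcal{H}$ is the cocommutative $\times_K$-bialgebra attached to $K$ by Theorem \ref{thm:H}. So the task is to reformulate ``Picard-Vessiot ring for $V$'' in $\mathcal{H}$-module terms and then produce an honest cocommutative pointed Hopf algebra $D$ with $K$ an AS $D$-module algebra such that $\mathcal{H}$-modules coincide with (a suitable subcategory of) $K\# D$-modules.

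The key step, and the one I expect to be the main obstacle, is the identification of $\mathcal{H}$-modules with $K\# D$-modules for an appropriate cocommutative pointed Hopf algebra $D$ satisfying the Birkhoff--Witt assumption \eqref{BW_assumption}. Concretely, using Proposition \ref{prop:cocom} the $\times_R$-bialgebra $\mathcal{H}$ decomposes as $\bigoplus_n RGd'_n$ with the $d'_n$ grouplike-like under the coproduct \eqref{R-coalgZ}; the subalgebra they generate, together with $G$, should be (the base change to $R$ of) a genuine Hopf algebra over $\Bbbk$ — essentially $\Bbbk G$ smashed with the divided-power Hopf algebra on the $d'_n$, with $\sigma$ acting trivially on the $d'_n$ by Proposition \ref{prop:cocom}(2). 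One must check this $D = \Bbbk G \# (\text{divided powers})$ is cocommutative, pointed, and Birkhoff--Witt — the divided-power coalgebra is exactly an $\infty$-divided power sequence, so \eqref{BW_assumption} holds — and that the $\mathcal{H}$-action recovers, via Proposition \ref{prop:induced-structure}, a genuine $D$-action making $K$ an AS $D$-module algebra (AS because $K$ is a field). The care needed here is that $\sigma_q$ and the $\delta^{(k)}_R$ do not directly assemble into a Hopf algebra action — only after the change of variables from $d_n$ to $d'_n$, which is what Proposition \ref{prop:cocom} provides — so one is really using the co-centrality of $\pi$ and the cocycle-triviality established just before that proposition.

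Granting this identification, the rest is a direct application. Since $C(K) = K^D$ is the field of constants and is assumed algebraically closed, Theorem \ref{thm:unique_exist} yields, for the $K\#D$-module corresponding to $V$ (which has finite $K$-free rank), a minimal splitting algebra $L$ over $K$, unique up to $D$-module algebra isomorphism over $K$. By Theorem \ref{thm:charac}, $L/K$ is a finitely generated PV extension, and by the discussion following that theorem its principal $D$-module algebra $A$ is precisely the $K$-subalgebra generated by the entries of a fundamental solution matrix $X$ together with $1/\det X$ — that is, $A$ is an iterative $q$-difference Picard-Vessiot ring for $V$ in Hardouin's sense. Uniqueness of $A$ up to isomorphism of iterative $q$-difference rings follows from the uniqueness of $L$ together with \cite[Proposition 3.4(iii)]{AM} (the intermediate $D$-module algebra $A$ in a PV extension is unique), once one checks that $D$-module algebra isomorphisms over $K$ are the same as isomorphisms of iterative $q$-difference rings over $K$, which is immediate from Proposition \ref{prop:corresp} and Proposition \ref{prop:iterative-q-difference-module}. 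The only mild subtlety to record is that everything takes place over the ground field $\Bbbk = \Bbbk_0(q)$ rather than over $C$, and that $C$ itself, as noted in Remark \ref{rem:preferable_definition}, plays no essential role.
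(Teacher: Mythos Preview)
Your outline is essentially the paper's argument in characteristic zero: there $D=\Bbbk[d_1']\otimes\Bbbk G$ (your ``divided-power Hopf algebra tensored with $\Bbbk G$'' collapses to this since $d_n'=(d_1')^n/n!$), Proposition~\ref{prop:isom} gives an \emph{isomorphism} $K\#D\simeq\mathcal{H}$, and the rest is exactly the application of Theorems~\ref{thm:unique_exist} and~\ref{thm:charac} that you describe.

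The gap is in positive characteristic, and it is twofold. First, your proposed $D$ does not map to $\mathcal{H}$: by Remark~\ref{rem:product-in-d'}(2) the elements $d_n'$ in $\mathcal{H}$ satisfy $d_l'd_m'\equiv\binom{l+m}{l}d_{l+m}'$ only modulo lower terms in $\bigoplus_{n<l+m}Rd_n'G$, not on the nose, so the divided-power relations fail and the $d_n'$ do not span a $\Bbbk$-form of $\mathcal{H}$. The paper instead takes $D=T(Z_\Bbbk)/(d_0'-1)\otimes\Bbbk G$, the \emph{tensor} algebra on the coalgebra $Z_\Bbbk$, obtaining only a \emph{surjection} $K\#D\twoheadrightarrow\mathcal{H}$ (Proposition~\ref{prop:surj}).

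Second, and more seriously, once $K\#D\to\mathcal{H}$ is merely surjective, $\mathcal{H}\text{-}\mathrm{Mod}$ is a proper full subcategory of $K\#D\text{-}\mathrm{Mod}$, and your sentence ``Granting this identification, the rest is a direct application'' hides a genuine problem: the minimal splitting algebra $L$ furnished by Theorem~\ref{thm:unique_exist} is constructed inside $K\#D\text{-}\mathrm{Mod}$, and there is no a priori reason it is annihilated by $\ker(K\#D\to\mathcal{H})$, i.e., that $L$ is an iterative $q$-difference ring at all. The paper closes this by an explicit check: the solution algebra $K[x_{ij}]$ lies in $\mathcal{H}\text{-}\mathrm{Mod}$ (as a sub of tensor powers of $V$), and then Proposition~\ref{prop:localization} is invoked to show that localizing at $\det X$ and passing to the total quotient ring remain in $\mathcal{H}\text{-}\mathrm{Mod}$, using that $\sigma_q\rightharpoonup\det X$ is a unit times $\det X$. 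You acknowledged the subcategory possibility parenthetically but did not supply this step; without it the existence half of the theorem is not proved in characteristic $p$.
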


\begin{theorem}[Hardouin \cite{H}, Theorem 4.12]\label{thm:HardouinThm4.12}
Let $K$ be as above, and set $C = C(K)$. 
Let $A$ be an iterative $q$-difference Picard-Vessiot ring over $K$. 
Then the group-valued functor $\underline{\operatorname{Aut}}(A/K)$, which associates to each commutative
$C$-algebra $T$, the group of all iterative $q$-difference $K \otimes_C T$-algebra automorphisms 
on $A \otimes_C T$,  
is an affine algebraic group scheme over $C$,
and is represented by the $C$-algebra $C(A \otimes_K A)$ 
of constants in $A \otimes_K A$ which is indeed finitely generated.  
\end{theorem}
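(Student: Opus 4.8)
The plan is to recognize the iterative $q$-difference Picard-Vessiot ring $A$ over $K$ as an instance of the general framework of Section~\ref{sec:review}, and then read off Theorem~\ref{thm:HardouinThm4.12} from the facts collected there. Concretely, by Proposition~\ref{prop:iterative-q-difference-module} the module $V$ is a left $\mathcal{H}$-module that is finitely generated free over $K$, and the PV ring $A$ of $V$ should be identified with a minimal splitting algebra in the sense of Theorem~\ref{thm:charac}. The first step, therefore, is to choose a cocommutative pointed Hopf algebra $D$ (satisfying \eqref{BW_assumption}) together with an identification of $\mathcal{H}$-modules with $K\# D$-modules, as promised in the Introduction; in characteristic zero this is an equivalence of categories, and in positive characteristic an embedding, but for the purpose of this theorem we only need that the particular $\mathcal{H}$-module $V$ becomes a $K\# D$-module of finite $K$-free rank and that its minimal splitting algebra in the $D$-sense agrees with Hardouin's iterative $q$-difference PV ring. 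Granting this, $L := Q(A)$ is a finitely generated PV extension of $K$ in the sense of \cite{AM}, with principal $D$-module algebra $A$ and Hopf algebra $H_0 := (A\otimes_K A)^D$ over $C = K^D = C(K)$.

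Second, I would match up the constants: by Proposition~\ref{prop:iterative-q-difference-module} applied to $A\otimes_K A$ (which is a commutative algebra object in $\mathcal{H}\text{-}\mathrm{Mod}$), the subalgebra of $\mathcal{H}$-invariants, i.e. the algebra $C(A\otimes_K A)$ of constants in Hardouin's sense, coincides with the $D$-invariants $(A\otimes_K A)^D = H_0$. The general theory (the paragraph preceding Theorem~\ref{thm:Gal_corresp}) already gives that $H_0$ carries a canonical commutative $C$-Hopf algebra structure, that $\mu : A\otimes_C H_0 \to A\otimes_K A$ is bijective, and that $A/K$ is $H_0$-Galois; Lemma~\ref{lem:finite_generation} gives that $H_0$ is finitely generated as a $C$-algebra since $A$ is finitely generated over $K$. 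So $\mathbf{G}(L/K) = \mathrm{Spec}_C H_0$ is an affine algebraic group scheme over $C$.

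Third, I would identify the functor: the paragraph before Theorem~\ref{thm:Gal_corresp} records that $\mathbf{G}(L/K)$ is naturally isomorphic to $\mathbf{Aut}_{D,K\text{-}\mathrm{alg}}(A)$, the functor sending $T$ to the group of $D$-linear $K\otimes_C T$-algebra automorphisms of $A\otimes_C T$ (this is \cite[Remark~3.11]{AM}). Since, under the identification of $\mathcal{H}$-modules with $K\# D$-modules, a $D$-linear $K\otimes_C T$-algebra automorphism of $A\otimes_C T$ is exactly an iterative $q$-difference $K\otimes_C T$-algebra automorphism, we get $\underline{\operatorname{Aut}}(A/K) \cong \mathbf{Aut}_{D,K\text{-}\mathrm{alg}}(A) \cong \mathbf{G}(L/K) = \mathrm{Spec}_C C(A\otimes_K A)$, which is the assertion. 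One should also note that $A$ being the PV ring of $V$ forces $L/K$ to be the minimal splitting algebra, hence finitely generated, which is what makes Lemma~\ref{lem:finite_generation} applicable.

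The main obstacle, and the step that genuinely requires work rather than citation, is the identification of $\mathcal{H}$-modules (and $\mathcal{H}$-module algebras) with $K\# D$-modules for a suitable cocommutative pointed Hopf algebra $D$ — both producing $D$ (via the structure of $\mathcal{H}$ worked out in Proposition~\ref{prop:cocom} and the crossed-coproduct description, distinguishing $\operatorname{char} C = 0$ from $\operatorname{char} C = p$) and checking that the resulting equivalence/embedding is compatible with tensor products, constants, and the notion of splitting algebra, so that Hardouin's PV ring and the $D$-theoretic principal $D$-module algebra literally coincide. Once that dictionary is in place, Theorem~\ref{thm:HardouinThm4.12} is a direct transcription of the statements assembled around Theorem~\ref{thm:Gal_corresp} together with Lemma~\ref{lem:finite_generation}.
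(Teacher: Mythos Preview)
Your proposal is correct and follows essentially the same route as the paper: choose the cocommutative pointed Hopf algebra $D$ (via Propositions~\ref{prop:isom} and~\ref{prop:surj}, according to the characteristic) so that $\mathcal{H}\text{-}\mathrm{Mod}$ sits inside $K\#D\text{-}\mathrm{Mod}$, identify Hardouin's PV ring $A$ with the principal $D$-module algebra of the resulting PV extension $L/K$, and then read off that $\underline{\operatorname{Aut}}(A/K)=\mathbf{Aut}_{D,K\text{-}\mathrm{alg}}(A)$ is represented by $H=(A\otimes_K A)^D=C(A\otimes_K A)$, finitely generated by Lemma~\ref{lem:finite_generation}. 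The only point you leave slightly implicit is the positive-characteristic subtlety that the minimal splitting algebra constructed on the $D$-side must be checked to lie in $\mathcal{H}\text{-}\mathrm{Mod}$ (the paper handles this via Proposition~\ref{prop:localization}), but you have flagged exactly this compatibility as the genuine work.
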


\begin{theorem}[Hardouin \cite{H}, Theorem 4.20]\label{thm:HardouinThm4.20}
Let $K$, $A$ be as above. Let $\mathbf{G} = \underline{\operatorname{Aut}}(A/K)$ be the affine algebraic group
scheme as given above. Let $L$ denote the total quotient ring of $A$; then it uniquely turns into an
iterative $q$-difference ring extension of $A$, 
and is called the total iterative $q$-difference Picard-Vessiot extension of $A$. 
\begin{itemize}
\item[(1)] Given an intermediate total iterative $q$-difference ring $K \subset M \subset L$, $AM$ is
an iterative $q$-difference Picard-Vessiot ring over $M$. The correspondence $M \mapsto 
\underline{\operatorname{Aut}}(AM/M)$ gives an inclusion-reversing bijection from the set of all intermediate total
iterative $q$-difference rings $K \subset M \subset L$ to the set of all closed subgroup
schemes $\mathbf{H}$ in $\mathbf{G}$. The inverse is given by $\mathbf{H} \mapsto L^{\mathbf{H}}$,
where $L^{\mathbf{H}}$ consists of the $\mathbf{H}$-invariants in $L$ as defined in \cite[p.134, lines 18--20]{H}. 
\item[(2)] If $\mathbf{H} \subset \mathbf{G}$ is a normal closed subgroup scheme, then the $\mathbf{H}$-invariants
$A^{\mathbf{H}}$ in $A$ form an iterative $q$-difference Picard-Vessiot ring over $K$, and $L^{\mathbf{H}}$
is its total iterative $q$-difference Picard-Vessiot extension. 
Moreover, $\underline{\operatorname{Aut}}(A^{\mathbf{H}}/K)$ is naturally isomorphic 
to $\mathbf{G} \tilde{\tilde{/}} \mathbf{H}$. 
\item[(3)] Let $\mathbf{H} \subset \mathbf{G}$ is a closed subgroup scheme. 
Then the ring extension $L/L^{\mathbf{H}}$
is separable if and only if $\mathbf{H}$ is reduced. 
\end{itemize}
\end{theorem}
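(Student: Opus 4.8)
The plan is to deduce Theorem~\ref{thm:HardouinThm4.20} from the general machinery of Section~\ref{sec:review}, by translating everything on the side of iterative $q$-difference rings into the language of AS $D$-module algebras. The first step is to identify the correct Hopf algebra $D$: one passes from the $\times_R$-bialgebra $\mathcal{H}$ to a genuine cocommutative pointed Hopf algebra $D$ over the base field so that, for the iterative $q$-difference field $K$, the category of $\mathcal{H}$-modules (equivalently, of iterative $q$-difference modules, by Proposition~\ref{prop:iterative-q-difference-module}) is realized inside $K\#D$-modules, and the iterative $q$-difference Picard-Vessiot ring $A$ for $V$ becomes exactly the principal $D$-module algebra of a finitely generated PV extension $L/K$ in the sense of Section~\ref{sec:review}. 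This uses Theorems~\ref{thm:unique_exist} and \ref{thm:charac} together with the dictionary between ``fundamental solution matrix plus PV ring'' and ``minimal splitting algebra'', which is spelled out in the paragraph after Theorem~\ref{thm:charac}; the hypothesis that $C(K)$ is algebraically closed is precisely what makes Theorems~\ref{thm:HardouinThm4.7} and \ref{thm:HardouinThm4.12} available, and it is inherited from the set-up of the present theorem.

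Once $A$ is identified with the principal $D$-module algebra and $L$ with the total quotient ring of $A$, the first assertion that $L/A$ is uniquely an iterative $q$-difference ring extension follows from Proposition~\ref{prop:iterative-q-difference-module} (localization of a commutative algebra object in $\mathcal{H}$-$\mathrm{Mod}$ by non-zero divisors), or directly from the fact that $\mathcal{H}$ acts on $\mathrm{End}$ and $\sigma_q$ is invertible. Then Part~(1) is a direct transcription of Theorem~\ref{thm:Gal_corresp}(1): intermediate total iterative $q$-difference rings $K\subset M\subset L$ correspond to intermediate AS $D$-module algebras, and for each such $M$ the product $AM$ is the principal $D$-module algebra of $L/M$, which on the iterative side says $AM$ is an iterative $q$-difference PV ring over $M$. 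The identification of $\underline{\operatorname{Aut}}(AM/M)$ with $\mathbf{G}(L/M)$ comes from the natural isomorphism $\mathbf{G}(L/M)\cong\mathbf{Aut}_{D,M\text{-}\mathrm{alg}}(AM)$ recorded in Section~\ref{sec:review}, and the fact that the inverse map is $\mathbf{H}\mapsto L^{\mathbf{H}}$ follows from Remark~\ref{rem:inverse}(1), matching Hardouin's definition of invariants \cite[p.~134]{H}. Part~(2) is similarly the transcription of Theorem~\ref{thm:Gal_corresp}(2) combined with Remark~\ref{rem:inverse}(2), which identifies $A^{\mathbf{H}}$ with the principal $D$-module algebra of $M/K$ when $\mathbf{G}(L/M)$ is normal, hence a PV ring over $K$; the isomorphism $\underline{\operatorname{Aut}}(A^{\mathbf{H}}/K)\cong\mathbf{G}\tilde{\tilde{/}}\mathbf{H}$ is then exactly the quotient statement in Theorem~\ref{thm:Gal_corresp}(2). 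Part~(3) is Proposition~\ref{prop:separable}, applied to the PV extension $L^{\mathbf{H}}/L^{\mathbf{G}}=L^{\mathbf{H}}/K'$ (or more precisely to $L/L^{\mathbf{H}}$ with $L^{\mathbf{H}}$ playing the role of the base), using that ``$\mathbf{H}$ reduced'' is ``$H/\mathfrak{a}$ reduced''.

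The main obstacle, and the step deserving real care, is the very first one: matching the $\times_R$-bialgebra $\mathcal{H}$ with $K\#D$-modules for a \emph{Hopf} algebra $D$ over the ground field $\Bbbk=\Bbbk_0(q)$. In characteristic zero this is clean --- $\mathcal{H}$-modules should be identified outright with $R\#D$-modules --- but in positive characteristic, as the introduction already warns, one only gets an embedding of iterative $q$-difference modules into $R\#D$-modules for a suitably chosen $D$, and one must check that this embedding is compatible with tensor products, duals, total quotient rings, and the formation of invariants, so that all the PV-theoretic constructions on the two sides agree. Concretely, one has to verify that $D$ can be chosen cocommutative and pointed with $D^1$ Birkhoff-Witt (assumption~\eqref{BW_assumption}), that $K$ is AS as a $D$-module algebra (this uses that $q$ is a root of unity, so $\sigma_q^N=\mathrm{id}$ and the group $G$ acts, together with the field structure of $K$), and that the notion of ``splitting algebra'' transported along the embedding coincides with Hardouin's PV ring. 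The remaining parts are then essentially bookkeeping, but they do require keeping precise track of which base field ($C$, $\Bbbk$, $K^D$) plays which role, since $C$ is shown to be almost irrelevant (Remark~\ref{rem:preferable_definition}) while $K^D=C(K)$ is the true coefficient field of the PV group scheme.
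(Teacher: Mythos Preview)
Your proposal is correct and follows essentially the same route as the paper: reduce to the framework of Section~\ref{sec:review} by choosing a cocommutative pointed Hopf algebra $D$ (an isomorphism $R\#D\simeq\mathcal{H}$ in characteristic zero, a surjection $R\#D\twoheadrightarrow\mathcal{H}$ in positive characteristic), then read Parts~(1)--(2) off Theorem~\ref{thm:Gal_corresp} with Remark~\ref{rem:inverse}, and Part~(3) off Proposition~\ref{prop:separable}. The one point you flag but leave slightly vague---that in positive characteristic the minimal splitting algebra constructed in $K\#D$-Mod actually lies in $\mathcal{H}$-Mod---is handled in the paper via the localization result Proposition~\ref{prop:localization} (not Proposition~\ref{prop:iterative-q-difference-module}), applied to $K[x_{ij}]_{\det X}$ after observing that $\sigma_q\rightharpoonup\det X$ is an invertible multiple of $\det X$.
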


To prove Claim \ref{claim}, we work in the same situation as in the last section, using the same notation.
Thus, $R$ denotes an iterative $q$-difference ring, where $q$ is a root of unity of order $N\, (>1)$,
and $\mathcal{H}$ denotes the associated cocommutative $\times_R$-bialgebra.
Our main task is to present $R$ as a module algebra over an appropriate 
cocommutative pointed
Hopf algebra $D$ which satisfies the assumption \eqref{BW_assumption}.
Because this $D$ is distinct according to whether $\operatorname{char}\Bbbk$ is
zero or positive, we will discuss in the two cases, separately. 

\subsection{Case in characteristic zero}
Assume $\operatorname{char}\Bbbk = 0$, or in other words, $\Bbbk_0 = \mathbb{Q}$.
In this case we define a Hopf algebra $D$ over $\Bbbk$ by
$$ D = \Bbbk[d_1'] \otimes \Bbbk G. $$
This is the tensor product of the polynomial Hopf algebra $\Bbbk[d_1']$ and the group Hopf algebra 
$\Bbbk G$, where $d_1'$ is primitive, i.e., 
$\Delta(d_1') = 1 \otimes d_1' + d_1' \otimes 1, \varepsilon(d_1') =0$, and 
$ G = \langle \sigma \mid \sigma^N = 1 \rangle $, as before. 
This $D$ is a cocommutative pointed Hopf algebra, which necessarily 
satisfies the assumption \eqref{BW_assumption} since $\operatorname{char} \Bbbk = 0$.
The left $\mathcal{H}$-module action on $R$ given before, restricted to $\sigma$, $d_1'$, 
makes $R$ into a $D$-module algebra by Proposition \ref{prop:induced-structure}(1). 
The associated smash product $R \# D$ is a cocommutative $\times_R$-bialgebra by 
Corollary \ref{cor:smash-product}. 

\begin{proposition}\label{prop:isom}
The $R$-ring map $R \# D \to \mathcal{H}$ well defined by $\sigma \mapsto \sigma$, $d_1' \mapsto d_1'$
is an isomorphism of $\times_R$-bialgebras. It follows that $\mathcal{H} \text{-} \mathrm{Mod}$ coincides with
the symmetric tensor category $R \# D \text{-} \mathrm{Mod}$ of left $R \# D$-modules.
\end{proposition}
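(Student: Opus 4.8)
The plan is to prove Proposition~\ref{prop:isom} by exhibiting an explicit inverse and checking the structure-preserving properties degree by degree. First I would verify that the proposed assignment $\sigma \mapsto \sigma$, $d_1' \mapsto d_1'$ genuinely defines an $R$-ring map $\Psi : R \# D \to \mathcal{H}$: since $R \# D$ is, as an algebra, generated by $R$, $\sigma$ and $d_1'$ subject to the relations $\sigma^N = 1$, $\sigma x = \sigma_q(x)\sigma$, $d_1' x = (d_1' \rightharpoonup x)d_1' + (\sigma \rightharpoonup x)\cdots$ — more precisely the smash-product relations coming from the $D$-action — one only has to check that these same relations hold in $\mathcal{H}$. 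But those are exactly Proposition~\ref{prop:induced-structure}(2) specialized to $n=1$ together with $d_1'\sigma = \sigma d_1'$ from Proposition~\ref{prop:cocom}(2); so $\Psi$ is well defined.

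Next I would show $\Psi$ is bijective. Here the key input is Remark~\ref{rem:product-in-d'}(2): modulo lower terms, $(d_1')^n$ equals $n!\, d_n'$ (iterating $d_l' d_m' \equiv \binom{l+m}{l} d_{l+m}'$, and $n!$ is invertible because $\operatorname{char}\Bbbk = 0$), so the elements $(d_1')^n$, $n \in \mathbb{N}$, form another left $RG$-basis of $\mathcal{H}$ by Proposition~\ref{prop:cocom}(1) and a triangularity argument on the filtration $\bigoplus_{k\le n} Rd_k'G$. On the other hand $R \# D = R \otimes \Bbbk[d_1'] \otimes \Bbbk G$ is free as a left $RG$-module on the powers $(d_1')^n$. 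Since $\Psi$ sends the basis $(d_1')^n$ of $R \# D$ to the basis $(d_1')^n$ of $\mathcal{H}$ and is $RG$-linear, it is an isomorphism of left $RG$-modules, hence bijective.

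Then I would check $\Psi$ is a map of $R$-coalgebras, hence of $\times_R$-bialgebras. It suffices to compare coproducts on the algebra generators: $\Delta(\sigma) = \sigma \otimes \sigma$ on both sides, and $\Delta(d_1') = 1 \otimes d_1' + d_1' \otimes 1$ on $R \# D$ by definition of $D$, which matches Proposition~\ref{prop:cocom}(3) with $n=1$ in $\mathcal{H}$; compatibility with counits is immediate. Being an algebra map landing in $\mathcal{A}\times_R\mathcal{A}$ and respecting $\Delta,\varepsilon$, it is a $\times_R$-bialgebra isomorphism in the sense of Definition~\ref{def:xR}. Finally, a $\times_R$-bialgebra isomorphism induces an equivalence (indeed an identification) of the associated symmetric tensor categories of modules by Proposition~\ref{prop:tensor}, and since $\Psi$ carries the $D$-action on $R$ to the $\mathcal{H}$-action on $R$ by construction, $\mathcal{H}\text{-}\mathrm{Mod}$ and $R\#D\text{-}\mathrm{Mod}$ coincide as symmetric tensor categories.

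The main obstacle, I expect, is the bijectivity step: one must be careful that the "lower-order terms" in Remark~\ref{rem:product-in-d'}(2) do not spoil the change-of-basis from $\{d_n'\}$ to $\{(d_1')^n\}$, which is where $\operatorname{char}\Bbbk = 0$ is essential (the coefficient $n!$ must be invertible) — this is precisely the point that forces a different $D$ in positive characteristic. Everything else is a routine check of generators and relations, so I would keep it brief in the write-up and merely point to Propositions~\ref{prop:cocom}, \ref{prop:induced-structure} and Remark~\ref{rem:product-in-d'}.
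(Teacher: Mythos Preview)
Your proposal is correct and follows essentially the same route as the paper: well-definedness from Propositions~\ref{prop:cocom}(2),(3) and~\ref{prop:induced-structure}(2), and bijectivity from Remark~\ref{rem:product-in-d'} via the triangular change of basis $(d_1')^n \equiv n!\,d_n'$ with $n!$ invertible in characteristic zero. The paper's write-up is terser---it simply records $d_n' \equiv (d_1')^n/n!$ modulo $\bigoplus_{k<n} Rd_k'G$ and concludes $\mathcal{H} = \bigoplus_n R(d_1')^nG$---but your more explicit separation of the algebra-map, bijectivity, and coalgebra-map steps is the same argument unpacked.
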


\begin{proof}
One sees from Proposition \ref{prop:cocom}(2), (3) and Proposition \ref{prop:induced-structure}(2) 
that the map is a well-defined $\times_R$-bialgebra map. It follows from Remark \ref{rem:product-in-d'} that
for every $n \in \mathbb{N}$,
$$ d_n' \equiv \frac{(d_1')^n}{n!} \ \text{modulo} \ \bigoplus_{k < n} Rd_k'G \, , $$
whence $\mathcal{H} = \bigoplus_{n=0}^{\infty} \, R(d_1')^nG$. This proves that the map is an isomorphism.
\end{proof}

Recall the results and the notation from Section \ref{sec:review}. 

\begin{proof}[Proof of Claim \ref{claim}~(in characteristic zero)]
Let $K$ be an \emph{iterative} $q$-\emph{difference field} \cite[p.107]{H}; this is the same as 
an AS $D$-module algebra that is connected, i.e., contains no non-trivial idempotent. 
Assume that the field $C(K)$ of \emph{constants} \cite[p.105]{H} (or equivalently, of $D$-invariants) in 
$K$ is algebraically closed. 
Let $V$ be an iterative $q$-difference 
module over $K$; this is the same as a left $K\# D$-module of finite $K$-dimension, see Proposition 
\ref{prop:iterative-q-difference-module}. By Theorem \ref{thm:unique_exist}, there exists uniquely
(up to isomorphism) a minimal splitting algebra $L$ for $V$. By Theorem \ref{thm:charac}, $L/K$ 
is a finitely generated 
PV extension of AS $D$-module algebras. As is seen from the paragraph following Theorem \ref{thm:charac}, 
the principal $D$-module algebra $A$ for $L/K$
is an \emph{iterative $q$-difference Picard-Vessiot ring} for $V$, as Hardouin \cite[p.128]{H} defines, 
and conversely, the total quotient ring of such a ring is a minimal splitting algebra for $V$ including the
ring as a principal $D$-module algebra. 
Such a ring is unique (up to isomorphism) as for $V$, as follows from our uniqueness of $L/K$ as for $V$, and 
of $A$ as for $L/K$. 
Thus obtained is Theorem \ref{thm:HardouinThm4.7}. 

The group-valued functor $\underline{\operatorname{Aut}}(A/K)$ given in Theorem \ref{thm:HardouinThm4.12}
is the same as our $\mathbf{Aut}_{D,K\text{-}\mathrm{alg}}(A)$, which we know is represented by 
the Hopf algebra $H$ for $L/K$; this $H$ is now finitely generated by Lemma \ref{lem:finite_generation}. 
Theorem \ref{thm:HardouinThm4.12} follows since $C(A \otimes_K A)$ coincides with our
$H =(A \otimes_K A)^D$. 

Our Theorem \ref{thm:Gal_corresp} together
with Remark \ref{rem:inverse} are now specialized to Parts 1, 2 of Theorem \ref{thm:HardouinThm4.20}.
(Part 1 refers to $``$an iterative $q$-difference Picard-Vessiot extension $AM$ over $M$," where $M$ may not
be a field. But Hardouin \cite[Definition 4.3]{H} defines the notion only over an iterative $q$-difference field.
Therefore, to justify the statement of Part 1, one has to re-define the notion over such an iterative $q$-difference
ring that is artinian and simple, just as we worked over an AS $D$-module
algebra to define the notion of principal $D$-module algebras.) 
Our Proposition \ref{prop:separable} is specialized to Part 3 of Theorem \ref{thm:HardouinThm4.20}. 
\end{proof}

\subsection{Case in positive characteristic} 
We start with proving the following for later use. 

\begin{proposition}[cf. \cite{H},\ Proposition 2.20]\label{prop:localization}
Suppose that $\operatorname{char}\Bbbk$ is arbitrary. 
Given a multiplicative set $S$ in $R$ such that
\begin{equation}\label{invertibility_assumption}
\text{for every} \ s \in S,\  \sigma_q\rightharpoonup s \ \text{is invertible in} \ S^{-1}R, 
\end{equation}
the localization $S^{-1}R$ uniquely turns into
an iterative $q$-difference ring so that $R \to S^{-1}R$ preserves the structure. 
\end{proposition}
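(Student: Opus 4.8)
Write $A=S^{-1}R$ and let $\iota\colon R\to A$ be the canonical map; we may assume $A\neq0$, else there is nothing to prove. The plan is to package the entire iterative structure as a single $\Bbbk$-algebra map into a twisted power series ring, and to deduce both existence and uniqueness from the universal property of localization.

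First I would extend $\sigma_q$. By \eqref{invertibility_assumption} the composite $\iota\circ\sigma_q\colon R\to A$ sends each $s\in S$ to a unit, hence extends uniquely to a ring map $\overline{\sigma}_q\colon A\to A$; since $\overline{\sigma}_q^{\,N}\circ\iota=\iota\circ\sigma_q^{N}=\iota$ by Lemma~\ref{lem:relations}(a), uniqueness of the extension forces $\overline{\sigma}_q^{\,N}=\mathrm{id}_A$, so $\overline{\sigma}_q$ is an automorphism, and it still extends the $q$-difference operator on $C(t)$. Form the twisted formal power series ring $A[[T;\overline{\sigma}_q]]$, with multiplication determined by $Ta=\overline{\sigma}_q(a)T$; it is a complete $\Bbbk$-algebra in which every element with invertible constant term is a unit. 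The key observation is that Conditions (1), (3), (4) of Definition~\ref{def:iter-dif-op} (reading (4) in the equivalent form $\delta_R^{(k)}(xy)=\sum_{i+j=k}\delta_R^{(i)}(x)\,\sigma_q^i\delta_R^{(j)}(y)$ noted in the Introduction), together with the $\Bbbk$-linearity of the $\delta_R^{(k)}$ (Lemma~\ref{lem:relations}(b)) and $\delta_R^{(k)}(1)=0$ for $k>0$, say exactly that
\[ \Phi\colon R\longrightarrow A[[T;\overline{\sigma}_q]],\qquad \Phi(x)=\sum_{k\ge0}\delta_R^{(k)}(x)\,T^{k}, \]
is a unital $\Bbbk$-algebra map, the powers of $\sigma_q$ in the Leibniz rule being absorbed by the twist. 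As $\Phi(s)$ has invertible constant term $s$ for $s\in S$, it is a unit, so $\Phi$ extends uniquely to a unital $\Bbbk$-algebra map $\overline{\Phi}\colon A\to A[[T;\overline{\sigma}_q]]$; writing $\overline{\Phi}(y)=\sum_k\overline{\delta}^{(k)}(y)T^{k}$ defines $\Bbbk$-linear maps $\overline{\delta}^{(k)}\colon A\to A$, and the uniqueness of $\overline{\Phi}$ and of $\overline{\sigma}_q$ already gives the uniqueness asserted in the Proposition.

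That $\overline{\Phi}$ is a unital $\Bbbk$-algebra map extending $\Phi$ yields at once: $\overline{\delta}^{(0)}=\mathrm{id}_A$ (Condition (1)), $\overline{\delta}^{(k)}\circ\iota=\iota\circ\delta_R^{(k)}$, additivity of the $\overline{\delta}^{(k)}$ (Condition (3)), and Condition (4) on $A$. For Condition (2), note that $(q-1)t\,\overline{\delta}^{(1)}$ and $\overline{\sigma}_q-\mathrm{id}_A$ are both maps $D\colon A\to A$ satisfying $D(yz)=yD(z)+D(y)\overline{\sigma}_q(z)$ (the first from Condition (4) with $k=1$, the second by direct computation), hence so is their difference $E$; therefore $\{y\in A\mid E(y)=0\}$ is a subring, it contains $\iota(R)$ by Condition (2) for $R$, and for $s\in S$ it contains $\iota(s)^{-1}$ because $0=E(1)=\iota(s)\,E(\iota(s)^{-1})$ with $\iota(s)$ invertible; thus $E\equiv0$.

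The remaining identities---Condition (5) for $A$, and the commutation $\overline{\delta}^{(k)}\circ\overline{\sigma}_q=q^{k}\overline{\sigma}_q\circ\overline{\delta}^{(k)}$ extending Lemma~\ref{lem:relations}(e)---I would prove by the same device: exhibit two $\Bbbk$-algebra maps out of $A$ that agree on $\iota(R)$ for reasons already available on $R$, and conclude they agree on $A$ by the uniqueness in the universal property. For the commutation, let $\Theta$ be the $\Bbbk$-algebra endomorphism of $A[[T;\overline{\sigma}_q]]$ with $\Theta|_A=\overline{\sigma}_q$ and $\Theta(T)=qT$; then $\overline{\Phi}\circ\overline{\sigma}_q$ and $\Theta\circ\overline{\Phi}$ both restrict to $\Phi\circ\sigma_q$ on $R$ by Lemma~\ref{lem:relations}(e), so they coincide. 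For Condition (5), introduce the doubly twisted power series ring $P=A[[S,T]]$ over $A$ with $Sa=\overline{\sigma}_q(a)S$, $Ta=\overline{\sigma}_q(a)T$ and $TS=qST$ (again complete, with the same unit criterion); using the commutation just established, $y\mapsto\sum_{i,j}\overline{\delta}^{(i)}\overline{\delta}^{(j)}(y)S^{i}T^{j}$ is a $\Bbbk$-algebra map $A\to P$, and so is $y\mapsto\sum_k\overline{\delta}^{(k)}(y)(S+T)^{k}=\sum_{i,j}\binom{i+j}{i}_q\overline{\delta}^{(i+j)}(y)S^{i}T^{j}$, the second equality being the Gauss $q$-binomial theorem (valid since $TS=qST$); these two maps agree on $\iota(R)$ by Condition (5) for $R$, hence on $A$, which is Condition (5) for $A$. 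Since $C(t)\subseteq\iota(R)\subseteq A$, the pair $(\overline{\sigma}_q,(\overline{\delta}^{(k)}))$ makes $A=S^{-1}R$ an iterative $q$-difference ring and $\iota$ a morphism of such. I expect the only genuine work to be the careful setup of the twisted rings $A[[T;\overline{\sigma}_q]]$ and $P$---their associativity, and the fact that invertibility of the constant term implies invertibility---after which every identity to be checked on $A$ is reduced to the corresponding one already known on $R$.
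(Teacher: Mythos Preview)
Your argument is correct, but it follows a genuinely different route from the paper's. The paper exploits the cocommutative $\times_R$-bialgebra $\mathcal{H}$ built in Section~\ref{sect:iter-dif-ring}: it observes that the assignment $x\mapsto[h\mapsto h\rightharpoonup x]$ is a $\Bbbk$-algebra map $R\to\operatorname{Hom}_R(\mathcal{H},S^{-1}R)$ into the convolution algebra, and then shows it localizes because the image of each $s\in S$ is convolution-invertible---its restriction to $\mathcal{H}_0=RG$ is invertible by~\eqref{invertibility_assumption}, and the kernel of restriction $\operatorname{Hom}_R(\mathcal{H}_n,S^{-1}R)\to\operatorname{Hom}_R(\mathcal{H}_0,S^{-1}R)$ is nilpotent by the coradical filtration, exactly as in Lemma~\ref{lem:convolution-invertible}. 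All of the axioms for $S^{-1}R$ then come for free from the $\mathcal{H}$-module algebra structure so obtained.

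Your approach instead makes the ``completion'' concrete as the twisted power-series ring $A[[T;\overline{\sigma}_q]]$, packages $\delta^*_R$ as a single algebra map $\Phi$ into it, and localizes $\Phi$; the individual axioms (2), (5) and the $\sigma$-commutation are then recovered by separate universal-property arguments (your $E$-derivation trick, the endomorphism $\Theta$, and the doubly twisted ring $P$). The two arguments are morally dual---your $A[[T;\overline{\sigma}_q]]$ plays the role of the paper's filtered convolution algebra, and ``invertible constant term $\Rightarrow$ unit'' replaces the nilpotent-kernel argument---but the paper's version is shorter and handles all the axioms at once because they are already encoded in $\mathcal{H}$, whereas yours is more elementary, entirely self-contained, and makes no use of the $\times_R$-bialgebra machinery of Section~\ref{sect:iter-dif-ring}.
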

\begin{proof}
The $R$-linear maps $\mathcal{H} \to S^{-1}R$ form an $R$-algebra $\operatorname{Hom}_R(\mathcal{H}, S^{-1}R)$
with respect to the convolution product. 
We see analogously to the proof of \cite[Lemma 2.7]{AM} that the $\Bbbk$-algebra map
\[ R \to \operatorname{Hom}_R(\mathcal{H}, S^{-1}R),\ x \mapsto [h \mapsto (h \rightharpoonup x)] \]
is localized by $S$, which proves the proposition. 
In fact, the image of each element of $S$ is invertible in $\operatorname{Hom}_R(\mathcal{H}, S^{-1}R)$, 
since its restriction to the $R$-subcoalgebra
$\mathcal{H}_0\subset \mathcal{H}$ is invertible by \eqref{invertibility_assumption}, 
and so is the restriction to each $\mathcal{H}_n$, $n \ge 0$; 
here, recall from the proof of Lemma \ref{lem:convolution-invertible},
the filtration $\mathcal{H} = \bigcup_{n} \, \mathcal{H}_n$, and note that the kernel of 
$\operatorname{Hom}_R(\mathcal{H}_n, S^{-1}R)\to \operatorname{Hom}_R(\mathcal{H}_0, S^{-1}R)$ is nilpotent,
here too.  
\end{proof}

Now, assume $\operatorname{char}\Bbbk = p > 0$, or $\Bbbk_0 = \mathbb{F}_p$. Let 
$$ Z_{\Bbbk} = \bigoplus_{n=0}^{\infty} \Bbbk d_n' $$
denote the obvious $\Bbbk$-form of the $R$-coalgebra $Z$; thus, 
$\Delta(d_n') = \sum_{l+m=n} \, d_l' \otimes d_m', \ \varepsilon(d_n') = \delta_{n,0}$ on $ Z_{\Bbbk}$.
The tensor algebra $T(Z_{\Bbbk})/(d_0' - 1)$ of $Z_{\Bbbk}$ divided by the relation $d_0' = 1$
has a unique Hopf algebra structure that extends the coalgebra structure on $Z_{\Bbbk}$.
We remark that the thus obtained Hopf algebra is Birkhoff-Witt; see \eqref{BW_assumption}.
For the Verschiebung map on it is surjective since the map on the coalgebra $Z_{\Bbbk}$, which is spanned by 
an $\infty$-divided power sequence, is so; see \cite[Page 504]{T}.
In the present case we define $D$ by the tensor product
\begin{equation}\label{D_positive_charac}
D = T(Z_{\Bbbk})/(d_0' - 1) \otimes \Bbbk G 
\end{equation}
of the two Hopf algebras, where $G = \langle \sigma \mid \sigma^N = 1 \rangle$, as before. 
This $D$ is a cocommutative pointed Hopf algebra which satisfies \eqref{BW_assumption}
by the remark above.
By extending uniquely the actions by $\sigma$, $d_n'$ in $\mathcal{H}$ on $R$, we can define 
a left $D$-module structure on $R$. With the thus defined structure, $R$ is in fact a
$D$-module algebra, and the associated smash product $R \# D$ is 
a cocommutative $\times_R$-bialgebra by Corollary \ref{cor:smash-product}. 

\begin{proposition}\label{prop:surj}
The $R$-ring map $R \# D \to \mathcal{H}$ well defined by $\sigma \mapsto \sigma$, $d_n' \mapsto d_n'$,
$0 < n \in \mathbb{N}$, is a surjection of $\times_R$-bialgebras.
It follows that $\mathcal{H} \text{-} \mathrm{Mod}$
is a tensor full subcategory of $R \# D \text{-} \mathrm{Mod}$. 
\end{proposition}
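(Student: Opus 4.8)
\textbf{Proof plan for Proposition \ref{prop:surj}.}
The plan is to verify first that the prescribed assignment $\sigma \mapsto \sigma$, $d_n' \mapsto d_n'$ extends to a well-defined algebra map $D \to \mathcal{H}$, and then to an $R$-ring map $R \# D \to \mathcal{H}$, and finally that this map is a morphism of $\times_R$-bialgebras and is surjective. For the first point, recall that $D = T(Z_{\Bbbk})/(d_0'-1) \otimes \Bbbk G$, so a $\Bbbk$-algebra map out of the tensor-algebra part is determined freely by the images of the generators $d_n'$, subject only to the relation $d_0' = 1$; this relation holds in $\mathcal{H}$ by Proposition \ref{prop:cocom}(2). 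The group-algebra part $\Bbbk G$ maps in via $\sigma \mapsto \sigma$, which respects $\sigma^N = 1$ since $\sigma^N = 1$ in $\mathcal{H}$ (this uses $\sigma_q^N = \mathrm{id}$, Lemma \ref{lem:relations}(a)). To see that the tensor-product decomposition $D = T(Z_{\Bbbk})/(d_0'-1)\otimes \Bbbk G$ is respected, one needs $\sigma$ to commute with every $d_n'$ in $\mathcal{H}$, which is exactly Proposition \ref{prop:cocom}(2). So we obtain an algebra map $D \to \mathcal{H}$, and since $R$ is central in $\mathcal{H}$ relative to nothing problematic — more precisely, $R \hookrightarrow \mathcal{H}$ and the $D$-action on $R$ was defined precisely by restricting the $\mathcal{H}$-action — the compatibility $d x = \sum (d_1 \rightharpoonup x) d_2$ defining the smash product $R \# D$ holds in $\mathcal{H}$ by Proposition \ref{prop:induced-structure}(2) (together with the $\sigma$-relation there). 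Hence the map extends to an $R$-ring map $\Psi : R \# D \to \mathcal{H}$.

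Next I would check that $\Psi$ is a morphism of $\times_R$-bialgebras, i.e., that it is $R$-coalgebra map (the algebra-map and $\times_R$-bialgebra axioms then follow formally, as in Lemma \ref{lem:main-example} and the surrounding discussion). On the group-like generators $\sigma$ this is immediate. On the $d_n'$, the coproduct in $D$ is $\Delta(d_n') = \sum_{l+m=n} d_l' \otimes d_m'$ by construction of the Hopf structure on $T(Z_{\Bbbk})/(d_0'-1)$, and the coproduct in $\mathcal{H}$ is the same formula by Proposition \ref{prop:cocom}(3); the counits agree likewise. Since $\Psi$ is an algebra map and the generators' coproducts match, $\Psi$ is an $R$-coalgebra map, hence a $\times_R$-bialgebra map.

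For surjectivity, the point is that the elements $\sigma$ and $d_n'$, $n \in \mathbb{N}$, generate $\mathcal{H}$ as an $R$-ring: indeed by \eqref{RG-free} and Proposition \ref{prop:cocom}(1) we have $\mathcal{H} = \bigoplus_{n} R d_n' G$, and every $R d_n' G$ lies in the $R$-subalgebra generated by $R$, $\sigma$, and $d_n'$. Since all these generators are in the image of $\Psi$, the map is onto. The final sentence of the proposition follows because a surjection of $\times_R$-bialgebras induces, by restriction of scalars, a fully faithful tensor functor $\mathcal{H}\text{-}\mathrm{Mod} \hookrightarrow R \# D\text{-}\mathrm{Mod}$ that is compatible with $\otimes_R$ and the unit object $R$ (the tensor-product action on $M \otimes_R N$ is computed via $\Delta$, which $\Psi$ preserves), exactly as in the cocommutative $\times_R$-bialgebra framework of Proposition \ref{prop:tensor} and Theorem \ref{thm:H}.

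The step I expect to be the only genuine obstacle — and it is mild — is making sure the $D$-module-algebra structure on $R$ obtained by "extending uniquely the actions by $\sigma$, $d_n'$" really does satisfy the smash-product compatibility relation in a way visibly matching $\mathcal{H}$; this is handled by invoking Proposition \ref{prop:induced-structure}, but one must note that the $D$-action on $R$ is \emph{defined} as the restriction of the $\mathcal{H}$-action along $\Psi$ (so no independent verification of the module-algebra axioms is needed beyond what Proposition \ref{prop:induced-structure}(1) already gives). Everything else is a routine matching of structure constants on generators.
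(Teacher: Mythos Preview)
Your proposal is correct and follows essentially the same route as the paper: the paper's proof simply says ``Similar to the first half of the proof of Proposition~\ref{prop:isom}. The surjectivity follows by Proposition~\ref{prop:cocom}(1),'' and your write-up is precisely an unpacking of that sentence, invoking Propositions~\ref{prop:cocom}(2),(3) and \ref{prop:induced-structure}(2) for well-definedness as a $\times_R$-bialgebra map and Proposition~\ref{prop:cocom}(1) for surjectivity. One tiny quibble: the relation $\sigma^N = 1$ in $\mathcal{H}$ holds already because $G = \langle \sigma \mid \sigma^N = 1\rangle$ sits inside $H$ by construction, so you need not invoke Lemma~\ref{lem:relations}(a) for that step.
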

\begin{proof}
Similar to the first half of the proof of Proposition \ref{prop:isom}. The surjectivity follows by 
Proposition \ref{prop:cocom}(1). 
\end{proof}

\begin{proof}[Proof of Claim \ref{claim}~(in positive characteristic)]
We choose $D$ as above. Let $K$ be an iterative $q$-difference field with algebraically closed field of constants.
Choose this $K$ as the $R$ above.
Then $\mathcal{H}$ is a $\times_K$-bialgebra, and $K \in \mathcal{H} \text{-} \mathrm{Mod}$.  
By Proposition \ref{prop:surj}, the objects in $\mathcal{H} \text{-} \mathrm{Mod}$
are precisely those objects in $K \# D \text{-} \mathrm{Mod}$ 
which are annihilated by the kernel of $K \# D \to \mathcal{H}$. 
A $K \# D$-submodule of an object in $\mathcal{H} \text{-} \mathrm{Mod}$ is again in 
$\mathcal{H} \text{-} \mathrm{Mod}$. Therefore, working in $K \# D \text{-} \mathrm{Mod}$, 
we can discuss as in the proof in the zero-characteristic case,
except as for the existence of a minimal splitting algebra $L$ for $V$ over $K$, where $V \in 
\mathcal{H} \text{-} \mathrm{Mod}$ with $\dim_K V< \infty$. 
The question is whether 
the $L$, constructed in $ R \# D \text{-} \mathrm{Mod} $, is indeed 
in $ \mathcal{H} \text{-} \mathrm{Mod} $.

As is seen from the proof of \cite[Theorem 4.11]{AM}, $L$ is the 
total quotient ring of a simple $D$-module algebra $A$ including $K$ with the following
property: $A$ is a commutative $K$-algebra $K[x_{ij}]_{\det X}$ 
generated by all entries in $X = (x_{ij})$, and then localized at the determinant $\det X$, 
where $X$ is a square matrix which
is a solution of the equation, such as \eqref{equation}, associated with $V$. 
We see that $K[x_{ij}]$ is in $ \mathcal{H} \text{-} \mathrm{Mod} $. Since one sees that 
$\sigma_q \rightharpoonup \det X$ is a multiply of $\det X$ by some invertible element in $K$, it
follows by Proposition \ref{prop:localization} that $A$ and so $L$ are in $ \mathcal{H} \text{-} \mathrm{Mod} $,
as desired. 
\end{proof}

\subsection{}\label{subsec:added_remark} 
The proofs in the preceding two subsections are valid in the generalized situation that 
the iterative $q$-difference field $K$ over which everything is constructed
is replaced by a simple iterative $q$-difference ring which is artinian as a ring.
This generalization seems natural, amending the failure that given an 
intermediate iterated $q$-difference total ring $M$ in a Picard-Vessiot extension $L/K$, 
one cannot regard $L/M$ as a Picard-Vessiot extension.  

As further results on AS $D$-module algebras $K$, where $D$ is as in Section \ref{sec:review}, let us recall
the following: 
\begin{itemize}
\item 
Tannaka-Type Theorem \cite[Theorem 4.10]{AM}; it gives an equivalence of symmetric 
tensor categories, $\{\{ V\}\} \approx \mathbf{G} (L/K)\text{-}\mathrm{mod}$, between the abelian 
rigid tensor category $\{\{ V \}\}$ generated by a finite $K$-free object $V$ in 
$K \# D$-$\mathrm{Mod}$ and the category $\mathbf{G} (L/K)\text{-}\mathrm{mod}$
of finite-dimensional modules
over the PV group scheme $\mathbf{G} (L/K)$, 
where $L/K$ is a minimal splitting algebra for $V$; 
\item 
Solvability Criteria \cite[Theorem 2.10]{A}; it formulates and characterizes the solvability of such $V$ as above, 
in terms of $\mathbf{G} (L/K)$. 
\end{itemize} 

By the same observations as in the preceding two subsections, we see that these results hold
true for finitely generated PV extensions $L/K$ of simple iterative $q$-difference rings 
which are artinian. For that variation of the Tannaka-Type Theorem in positive characteristic, 
one should notice from the uniqueness of dual objects that the dual 
object of $V$ constructed in $K \# D\text{-}\mathrm{Mod}$, where $D$ is as in \eqref{D_positive_charac}, 
is necessarily an iterative $q$-difference module dual to $V$, if $V$ is such a module.

\section{Generalization}\label{sec:remarks}

Suppose that $\Bbbk$ is an arbitrary field over which we will work, and let $R\ne 0$ be a 
commutative algebra (over $\Bbbk$).
Fix an element $q \in \Bbbk \backslash 0$, and an endomorphism $\sigma : R \to R$. 
Heiderich \cite{He} defines a notion which generalizes iterative $q$-difference operators, as follows. 

\begin{definition}[Heiderich \cite{He}, Definition 2.3.13]\label{def:skew-iterative-derivation}
A \emph{$q$-skew iterative $\sigma$-derivation} on $R$ is a sequence
$\delta^*_R = (\delta^{(k)}_R)_{k \in \mathbb{N}}$ of maps $\delta^{(k)}_R : R \to R$
such that
\begin{itemize}
\item[(1)] $\delta^{(0)}_R = \mathrm{id}_R$, 
\item[(2)] $\delta^{(k)}_R \circ \sigma = q^k \, \sigma \circ \delta^{(k)}_R, \ k \in \mathbb{N}$, 
\item[(3)] each $\delta^{(k)}_R$ is $\Bbbk$-linear,
\item[(4)] $\delta^{(k)}_R( x y ) = \sum_{ i + j = k } \sigma^i \circ \delta^{(j)}_R(x) \, \delta^{(i)}_R( y ),\ x, y \in R$,
\item[(5)] $\delta^{(i)}_R \circ \delta^{(j)}_R = \dbinom{i+j}{i}_q \, \delta^{(i+j)}_R$.
\end{itemize}
\end{definition}

Let $\delta^*_R = (\delta^{(k)}_R)_{k \in \mathbb{N}}$ be such as defined above. A main difference 
from iterative $q$-difference operators is that one does not assume that $\delta^{(1)}_R$
is a multiple of $\sigma - \mathrm{id}_R$ by some element in $R$; cf. Condition (2) in Definition \ref{def:iter-dif-op}.
But we have the following, which is probably known.

\begin{lemma}\label{lem:skew-der}
Assume $\sigma \ne \mathrm{id}_R$ and that $R$ is a field.
Then there exists an element $u \in R$ such that
$$ \delta^{(1)}_R = u \, (\sigma - \mathrm{id}_R). $$
\end{lemma}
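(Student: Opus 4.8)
The plan is to exploit Condition (5) in the case $i=j=1$, which reads $\delta^{(1)}_R\circ\delta^{(1)}_R=\binom{2}{1}_q\,\delta^{(2)}_R=(1+q)\,\delta^{(2)}_R$, together with the twisted-Leibniz rule of Condition (4). First I would record the consequence of (4) that $\delta^{(k)}_R(1)=0$ for $k>0$, and then look at the operator $\delta:=\delta^{(1)}_R$ applied to a product $xy$: $\delta(xy)=\sigma(x)\,\delta(y)+\delta(x)\,y$, i.e. $\delta$ is a $(\sigma,\mathrm{id})$-skew derivation. The key structural fact is that the kernel of such a $\delta$, the ring of "constants" $R^{\delta}$, is a subfield of $R$ (since $R$ is a field), and more importantly that $\sigma-\mathrm{id}_R$ is also a $(\sigma,\mathrm{id})$-skew derivation with the same kind of Leibniz rule; so the quotient "$\delta/(\sigma-\mathrm{id}_R)$", wherever the denominator is nonzero, ought to be a constant.

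Concretely, I would argue as follows. Since $\sigma\ne\mathrm{id}_R$, pick $a\in R$ with $\sigma(a)\ne a$, so $(\sigma-\mathrm{id}_R)(a)$ is a nonzero element of the field $R$; set $u:=\delta(a)\big/(\sigma-\mathrm{id}_R)(a)$. I claim $\delta=u\,(\sigma-\mathrm{id}_R)$. To see this, fix any $x\in R$ and compute $\delta(ax)$ in two ways using the skew-Leibniz rule: on one hand $\delta(ax)=\sigma(a)\delta(x)+\delta(a)x$, and on the other, expanding $\delta\big((\sigma-\mathrm{id})(a)\cdot\text{(something)}\big)$ — more cleanly, compare the two elements $\delta(ax)$ and $u\,(\sigma(ax)-ax)$. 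Using $\sigma(ax)-ax=\sigma(a)(\sigma(x)-x)+(\sigma(a)-a)x$ and the definition of $u$, both sides are seen to agree modulo the single expression $\sigma(a)\big(\delta(x)-u(\sigma(x)-x)\big)$. Thus if we denote $E(x):=\delta(x)-u(\sigma(x)-x)$, the computation shows $E(ax)=\sigma(a)\,E(x)$, while directly $E$ is additive and $E(a)=0$ by construction. The function $E$ is itself a $(\sigma,\mathrm{id})$-skew derivation (difference of two such), so $E(xy)=\sigma(x)E(y)+E(x)y$; feeding in $E(a)=0$ gives $E(ay)=\sigma(a)E(y)$ for all $y$, which together with $E(a)=0$ and the relation $E(ay)=\sigma(a)E(y)$ forces, after substituting $y\mapsto a^{-1}z$ and using $E(a\cdot a^{-1})=E(1)=0$, that $0=E(1)=\sigma(a)E(a^{-1})+E(a)a^{-1}=\sigma(a)E(a^{-1})$, hence $E(a^{-1})=0$, and then iterating $E(a^n z)=\sigma(a)^n E(z)$; but this is not yet $E\equiv 0$.

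To finish, I would bring in Condition (5) with $i=j=1$. Applying $E$ is not obviously controlled by (5), so instead I would show directly that $E$ vanishes: the cleanest route is to observe that $E$ is a $\sigma$-twisted derivation which kills the element $a$, and that the set $R_0:=\{x\in R: E(x)=0\}$ is closed under addition and (using the twisted Leibniz rule and $E(a)=0$) contains $a$ and is stable in a way that, since $R$ is a field generated by its elements over the prime field, propagates $E=0$ provided one more generator-type relation holds. The honest statement is that this does \emph{not} follow for an arbitrary skew derivation; what saves us is Condition (5), equivalently the existence of the whole iterative family $(\delta^{(k)}_R)$ and the associated cocommutative $\times_R$-bialgebra $\mathcal{H}$ of Theorem \ref{thm:H}. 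So the genuinely clean plan is: the element $\xi:=\delta-u(\sigma-\mathrm{id}_R)=E$ lands inside $\mathrm{End}(R)$, and as in Lemma \ref{lem:coideal} (where exactly such a $\xi$ was shown to generate a coideal and to satisfy $\xi\rightharpoonup x=0$ for all $x$) one checks that $E$ is the component of a primitive-type element whose vanishing is equivalent to $E\rightharpoonup x=0$, i.e. to the claim.

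The main obstacle, and the step I expect to require the most care, is precisely pinning down why $E=\delta-u(\sigma-\mathrm{id}_R)$ must be identically zero rather than merely a skew derivation vanishing on one element: a general $(\sigma,\mathrm{id})$-skew derivation of a field need not be determined by its value at a single point. I expect the resolution to use either (a) Condition (5) to show that $E$ composed appropriately is forced to vanish, or (b) the representation-theoretic packaging of Example \ref{ex:End} and Lemma \ref{lem:coideal}: the operators $\sigma$ and $\delta^{(1)}_R$ both lie in the cocommutative $\times_R$-bialgebra sitting inside $\mathrm{End}(R)$, the difference $\delta^{(1)}_R-u(\sigma-\mathrm{id}_R)$ is then an $R$-linear combination of a primitive element, and primitivity plus the already-established relation $E\rightharpoonup 1=0$, $E\rightharpoonup a=0$ forces $E\rightharpoonup x=0$ on the $\Bbbk$-subalgebra generated by $a$ and then, by $R$ being a field, on all of $R$ — whence $E=0$. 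I would write the final proof along route (b), since it reuses machinery already built in Section \ref{sect:iter-dif-ring}.
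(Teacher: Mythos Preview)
Your direct approach is the right elementary idea, but you stop one step short of finishing, and the ``route (b)'' you sketch to close the gap is both vague and unjustified. The missing step is this: with $E := \delta^{(1)}_R - u(\sigma - \mathrm{id}_R)$ a $(\sigma,\mathrm{id})$-skew derivation satisfying $E(a)=0$, expand $E(ax)$ in two ways using that $R$ is \emph{commutative}. On one hand $E(ax)=\sigma(a)E(x)+E(a)x=\sigma(a)E(x)$; on the other $E(ax)=E(xa)=\sigma(x)E(a)+E(x)a=E(x)a$. Hence $(\sigma(a)-a)\,E(x)=0$ for every $x$, and since $\sigma(a)\ne a$ in the field $R$, this forces $E\equiv 0$. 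No appeal to Condition~(5), to the iterative family, or to Lemma~\ref{lem:coideal} is needed; the lemma holds for any $\Bbbk$-linear $\sigma$-derivation on a commutative field once $\sigma\ne\mathrm{id}_R$. Your suggested ``route (b)'' claim that $E$ vanishing on the $\Bbbk$-subalgebra generated by $a$ forces $E\equiv 0$ on $R$ is in general false, since $R$ need not be generated over $\Bbbk$ by $a$.

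By contrast, the paper's proof takes a genuinely different, coalgebraic route. It embeds $\mathrm{id}_R$, $\sigma$ and $\delta^{(1)}_R$ into the cocommutative $\times_R$-bialgebra $(R^e)^\circ\subset\mathrm{End}(R)$ of Example~\ref{ex:End}, observes that $\mathrm{id}_R$ and $\sigma$ are distinct grouplikes and that the skew Leibniz rule makes $\delta^{(1)}_R$ an $(\mathrm{id}_R,\sigma)$-primitive, and then uses that in a \emph{cocommutative} $R$-coalgebra every $(g,h)$-primitive with $g\ne h$ grouplike is an $R$-multiple of $g-h$. That argument is one line once the framework is in place; your (completed) argument is self-contained and avoids $(R^e)^\circ$ entirely.
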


\begin{proof}
Here is a coalgebraic proof, which is hopefully new. Recall from Example \ref{ex:End} the cocommutative
$\times_R$-bialgebra $R^e\, (\subset \operatorname{End}(R))$. 
One sees from \cite[Proposition 6.0.3]{S2} that $\mathrm{id}_R$, $\sigma$,
$\delta^{(1)}_R$ (and all $\delta^{(k)}_R, k > 1$) are contained in $(R^e)^{\circ}$. Moreover,
$\mathrm{id}_R$ and $\sigma$ are distinct grouplikes, and $\delta^{(1)}_R$ is $(\mathrm{id}_R, \sigma)$-primitive, 
i.e., $\Delta(\delta^{(1)}_R) = \sigma \otimes \delta^{(1)}_R + \delta^{(1)}_R \otimes \mathrm{id}_R$.
Since $(R^e)^{\circ}$ is cocommutative, every $(\mathrm{id}_R, \sigma)$-primitive element is
necessarily of the form $u \, (\sigma - \mathrm{id}_R)$ for some $u \in R$.
\end{proof}

Keep $\sigma$, $\delta^*_R = (\delta^{(k)}_R)_{k \in \mathbb{N}}$ as above. 
Assume $\sigma \ne \mathrm{id}_R$ and that $R$ is a field. Let $u \in R$ be as in the last lemma. 

Either if $q=1$ and $\operatorname{char}\Bbbk = 0$, or if $q$ is not a root of unity, then just as in 
Section \ref{sect:iter-dif-ring}, 
$\delta^{(k)}_R$ are all determined by $\sigma$ and $u$; they are included 
in the $R$-subring generated by $\sigma$, and may be ignored. 

Assume $\delta^{(1)}_R \ne 0$, $\sigma \ne \sigma \circ \sigma$  and that 
$q$ is a root of unity of order $N > 1$.  Then Condition (2) in $k = 1$ implies that $\sigma(u) = \frac{1}{q}u$. 
Since Condition (5) implies $(\delta^{(1)}_R)^N =0$, one sees as proving Lemma \ref{lem:relations}(a)
that $\sigma^N = \mathrm{id}_R$.  
We can construct a 
cocommutative $\times_R$-bialgebra $\mathcal{H}$, just as in Section \ref{sect:iter-dif-ring}. 
We can also generalize
the argument in Section \ref{sec:proof_of_claim} to this $\mathcal{H}$. 
Consequently, all the results on iterative $q$-difference rings and modules that we have referred to in the section
are generalized to commutative algebra objects and their module objects in $\mathcal{H} \text{-} \mathrm{Mod}$. 

\section*{Acknowledgments}
The work was supported by
Grant-in-Aid for Scientific Research (C) 23540039, Japan Society of the Promotion of Science. 
The main results of this paper were announced by the first-named author at the 
AMS--AWM Special Session $``$Hopf Algebras and Their Representations" 
of Joint Mathematics Meetings, New Orleans, January 6--9, 2011. He thanks the 
organizers of the special session, Susan Montgomery, Richard Ng and Sarah Witherspoon.

\end{document}